\setlist[enumerate]{leftmargin=.5in}
\setlist[itemize]{leftmargin=.5in}
\crefname{hypothesis}{Hypothesis}{Hypotheses}
\newtheorem{assumption}{Assumption}
\newtheorem{example}[theorem]{Example}
\title{A Moment-SOS Hierarchy for Robust Polynomial Matrix Inequality Optimization with SOS-Convexity\thanks{Submitted to the editors DATE.
\funding{FG was supported by the NSFC under grant 11571350. JW was supported by the NSFC grant 12201618.}}}
\author{Feng Guo\thanks{School of Mathematical Sciences, Dalian  University of Technology
  (\email{fguo@dlut.edu.cn}).}
\and Jie Wang\thanks{Academy of Mathematics and Systems Science, CAS
  (\email{wangjie212@amss.ac.cn}, \url{https://wangjie212.github.io/jiewang/}).}}
\DeclareMathOperator{\diag}{diag}
\newcommand{\RR}{{\mathbb R}}
\newcommand{\bS}{{\mathbb S}}
\newcommand{\mS}{{\mathbf S}}
\newcommand{\N}{\mathbb{N}}
\newcommand{\bx}{\mathbf{x}}
\newcommand{\by}{\mathbf{y}}
\newcommand{\bz}{\mathbf{z}}
\newcommand{\Y}{\mathcal{Y}}
\newcommand{\X}{\mathcal{X}}
\newcommand{\bA}{\mathcal{A}}
\newcommand{\V}{\mathbf{V}}
\newcommand{\bc}{\mathbf{c}}
\newcommand{\be}{\mathbf{e}}
\newcommand{\bw}{\mathbf{w}}
\newcommand{\bu}{\mathbf{u}}
\newcommand{\bss}{\mathbf{s}}
\def\ba{{\boldsymbol{\alpha}}}
\def\bb{{\boldsymbol{\beta}}}
\def\bg{{\boldsymbol{\gamma}}}
\newcommand{\bv}{\mathbf{v}}
\newcommand{\ud}{\mathrm{d}}
\newcommand{\supp}[1]{\mbox{\upshape supp}(#1)}
\newcommand{\tr}[1]{\mbox{\upshape tr}\left(#1\right)}
\newcommand{\trm}[1]{\mbox{\upshape tr}_m\left(#1\right)}
\newcommand{\rank}{\mbox{\upshape rank}}
\newcommand{\psdp}{\mbox{\upshape\tiny primal}}
\newcommand{\dsdp}{\mbox{\upshape\tiny dual}}
\newcommand{\snd}[1]{\vert\N^n_{#1}\vert}
\newcommand{\mH}{\mathscr{H}}
\newcommand{\mL}{\mathscr{L}}
\newcommand{\col}{\text{\upshape col}}
\newcommand{\QM}{\mathcal{Q}}
\newcommand{\ms}{\mathcal{M}}
\newif\ifcomment
\begin{document}

\maketitle

\begin{abstract}
We study a class of polynomial optimization problems with a robust polynomial matrix inequality (PMI) constraint where the uncertainty set itself is defined also by a PMI. These can be viewed as matrix generalizations of semi-infinite polynomial programs, since they involve actually infinitely many PMI constraints in general. Under certain SOS-convexity assumptions, we construct a hierarchy of increasingly tight moment-SOS relaxations for solving such problems. Most of the nice features of the moment-SOS hierarchy for the usual polynomial optimization are extended to this more complicated setting. In particular, asymptotic convergence of the hierarchy is guaranteed and finite convergence can be certified if some flat extension condition holds true. To extract global minimizers, we provide a linear algebra procedure for recovering a finitely atomic matrix-valued measure from truncated matrix-valued moments. As an application, we are able to solve the problem of minimizing the smallest eigenvalue of a polynomial matrix subject to a PMI constraint.
If SOS-convexity is replaced by convexity, we can still approximate the optimal value as closely as desired by solving a sequence of semidefinite programs, and certify global optimality in case that certain flat extension conditions hold true. Finally, an extension to the non-convexity setting is provided under a rank one condition. To obtain the above-mentioned results, techniques from real algebraic geometry, matrix-valued measure theory, and convex optimization are employed.
\end{abstract}

\begin{keywords}
  polynomial optimization, polynomial matrix inequality, robust optimization, moment-SOS hierarchy, semidefinite programming
\end{keywords}

\begin{AMS}
  90C23, 90C17, 90C22, 90C26
\end{AMS}

\section{Introduction}
Polynomial optimization problems with polynomial matrix inequality (PMI) constraints
have a wide range of applications in many fields \cite{aravanis2022polynomial,HL2006,HL2012,ichihara2009optimal,KN2020,pozdyayev2014atomic}. In particular, as special cases of PMIs, linear or bilinear matrix inequality constrained problems appear frequently in many synthesis problems for linear systems in optimal control \cite{vanantwerp2000tutorial}. 
Due to estimation errors or lack of information, the data of real-world optimization 
problems often involve uncertainty. Hence, robust optimization is an appropriate 
modeling paradigm for some safety-critical applications with little tolerance for failure
\cite{BBC2011}.

In this paper, we study the following robust PMI optimization problem
under data uncertainty in the PMI constraint:
\begin{equation}\label{eq::RCPSO}
\left\{
\begin{aligned}
f^{\star}\coloneqq\inf_{\by\in\Y}&\ f(\by)\\
\text{s.t.}&\ \Y\subseteq\RR^{\ell},\ P(\by, \bx)\succeq 0,\ 
\forall \bx\in \X\subseteq\RR^n,\\
\end{aligned}
\right.\tag{RPMIO}
\end{equation}
where $f(\by)$ is a polynomial function in $\by=(y_1,\ldots,y_\ell)$ which 
is the decision variables constrained in a basic semialgebraic set
\begin{equation}\label{defineY}
\Y\coloneqq\{\by\in\RR^{\ell} \mid \theta_1(\by)\ge 0, \ldots, \theta_s(\by)\ge 0\},
\end{equation}
$\bx=(x_1,\ldots,x_n)$ is the uncertain parameters belonging to some uncertainty set 
\begin{equation}\label{defineX}
    \X\coloneqq\{\bx\in\RR^n\mid G(\bx)\succeq 0\}
\end{equation}
defined by a $q\times q$ symmetric polynomial matrix $G(\bx)$, and $P(\by, \bx)$ is an $m\times m$ symmetric polynomial matrix in $\by$ and $\bx$ (i.e., $P(\by, \bx)$ depends polynomially on the decision variable $\by$ and the uncertain parameter $\bx$). This problem essentially has infinitely many PMI constraints corresponding to different points of $\X$.
We assume that the set of optimizers of \eqref{eq::RCPSO} is nonempty and
make the following sum-of-squares (SOS)-convexity assumptions on \eqref{eq::RCPSO}.
\begin{assumption}\label{assump1}
{\rm
(i) $f(\by),-\theta_1(\by),\ldots,-\theta_s(\by)$ are SOS-convex (Definition \ref{SOS-convex});
(ii) $-P(\by,\bx)$ is PSD-SOS-convex (Definition \ref{psd-sos-convex}) in $\by$ for all $\bx\in\X$;
(iii) $\X$ is compact.
}
\end{assumption}

To highlight the modelling power of \eqref{eq::RCPSO} under Assumption \ref{assump1}, let us name a few problems from different fields which can be modelled as an instance of \eqref{eq::RCPSO}.
First of all, we note that if $\Y=\RR^{\ell}$ and $f(\by)$, $P(\by, \bx)$ depend affinely on $\by$, then we retrieve the robust polynomial semidefinite program (SDP) considered in \cite{SH2006}. 
A basic problem in interval computations is to estimate intervals of confidence for the components of a given vector-valued function when its variables range in a product of intervals. Assume that the function is given by polynomials and we seek an ellipsoid of confidence for its components. Then this problem can be modelled as a robust polynomial SDP (hence an instance of \eqref{eq::RCPSO}).
In optimal control, many problems for systems of ordinary differential equations can be posed as convex optimization problems with matrix inequality constraints which should hold on a prescribed portion of the state space \cite{CG2010,HL2012,SH2006}. If the involved functions in the differential equations are polynomials, these problems often take the form of robust polynomial SDPs. In the context of risk management, the robust correlation stress testing where data uncertainty arises due to untimely recording of portfolio holdings can be formulated as a robust least square SDP fitting into the form of \eqref{eq::RCPSO} \cite{LCP2014}. Moreover, the deterministic PMI optimization problem of maximizing a polynomial function $h(\bx)$ subject to a PMI constraint $G(\bx)\succeq0$ (recall that multiple PMI constraints can be easily merged into one by diagonal augmentation) can be also formulated as an instance of \eqref{eq::RCPSO}:
\begin{equation}\label{eq::DPMI}
    \inf_{y\in\RR}\ y\quad \text{s.t.}\ \ y-h(\bx)\ge0,\ 
\forall \bx\in \X.
\end{equation}
Since the deterministic PMI optimization problem (including the usual polynomial optimization problem as a special case) is already difficult to solve in general, solving \eqref{eq::RCPSO} (even under Assumption \ref{assump1}) is more challenging.

For deterministic PMI optimization problems, Kojima \cite{Kojima2003} proposed SOS
relaxations by utilizing a penalty function and a generalized Lagrangian dual, and subsequently Henrion and Lasserre \cite{HL2006} gave a hierarchy of moment relaxations allowing to detect finite convergence and to extract global minimizers.
Recently, there have been increasing interests in studying robust polynomial optimization 
problems; see e.g. \cite{CJLW2022,JLF2023,JLV2015,LasserreRobust}. However, robust PMI constraints are not considered in these work. For robust polynomial SDPs,
Scherer and Hol \cite{SH2006} provided a hierarchy of matrix SOS relaxations by establishing a Putinar's style Positivstellens\"atz for polynomial matrices. There are also approaches for solving other types of robust SDPs \cite{LCP2014,LB2016,Oishi2006}. As far as the authors know, there is \emph{little} work on how to solve or even approximate \eqref{eq::RCPSO} in general.

Before introducing our main contributions, we would like to point out that the matrix SOS relaxations in \cite{SH2006} cannot be straightforwardly extended to handle \eqref{eq::RCPSO} due to the \emph{nonlinearity} of $P(\by, \bx)$ in $\by$. Also, the dual \emph{moment} facet of the matrix SOS relaxations for \eqref{eq::RCPSO} was not investigated in \cite{SH2006} or elsewhere. This motivates us to establish a moment-SOS hierarchy for \eqref{eq::RCPSO} (under Assumption \ref{assump1}) in full generality.

Let us denote by $\bS[\bx]^{m}$ the cone of $m\times m$ symmetric real polynomial matrices
in $\bx$ and by $\mathcal{P}^m(\X)$ its subcone consisting of polynomial matrices
which are positive semidefinite (PSD) on $\X$. 
By Haviland’s theorem for polynomial matrices (Theorem \ref{th::haviland}), the dual cone 
of $\mathcal{P}^m(\X)$ consists of tracial $\X$-moment functionals on $\bS[\bx]^{m}$
(Definition \ref{def::tf}), while justifying the membership of a linear functional on 
$\bS[\bx]^{m}$ to the dual cone of $\mathcal{P}^m(\X)$ amounts to the matrix-valued
$\X$-moment problem (Definition \ref{def::mmp}). Therefore, to explore the dual aspect of the 
matrix SOS relaxations for \eqref{eq::RCPSO}, we need to invoke the results on
the matrix-valued moment problem. For a given multi-indexed sequence of real
$m\times m$ symmetric matrices $\mathbf{S}=(S_{\ba})_{\ba\in\N^n}$, the matrix-valued
$\X$-moment problem asks if there exists a PSD matrix-valued representing
measure $\Phi$ supported on $\X$ such that $S_{\ba}=\int_{\X} \bx^{\ba}\ud \Phi(\bx)$ for all 
$\ba\in\N^n$. We refer the reader to \cite{KT2022} for a thorough introduction on 
the history and background about the matrix-valued moment problem. For the scalar moment 
problem ($m=1$), due to Haviland’s theorem and Putinar's Positivstellens\"atz, 
the representing measure is guaranteed by the PSDness of the associated moment matrices and localizing matrices. Based on this, Lasserre \cite{LasserreGlobal2001}
proposed the moment-SOS hierarchy for the scalar polynomial optimization and established
its asymptotic convergence. For the truncated scalar moment problem, Curto and Fialkow
\cite{CF1996} gave the celebrated flat extension condition (FEC) on the moment matrix as a sufficient condition for the existence of a representing measure, which allows to detect finite convergence of Lasserre's hierarchy and extract global minimizers \cite{LasserreHenrion}. 
Recently, Kimsey and Trachana \cite{KT2022} obtained a flat extension theorem for the truncated matrix-valued moment problem.
Very recently, Nie \cite[Chapter 10.3.2]{NieBook} investigated the truncated matrix-valued $K$-moment problem where $K$ is defined by polynomial equalities and PMIs.
He gave a solution to this problem using an FEC on the 
{\itshape block rank} of the associated moment matrix. We would like to point 
out that the FEC on the block rank is stronger than on the usual rank (see Remark \ref{rk::FEC}). In the rest of this paper, unless stated otherwise, 
the abbreviation FEC means the flat extension condition on the usual rank of the associated moment matrix.
Unlike the scalar case, to the best of our knowledge, there is \emph{little} work in the literature to link the theory of matrix-valued moments to PMI optimization.
In this paper, we aim to construct a moment-SOS hierarchy for \eqref{eq::RCPSO} with SOS-convexity by employing Scherer-Hol's Positivstellens\"atz and the matrix-valued measure theory.

\vskip 5pt
\noindent{\bf Contributions.} Our main contributions are summarized as follows:
\begin{enumerate}
\item
As the first contribution, we provide a solution to the truncated matrix-valued $\X$-moment problem using the FEC.
Furthermore, we develop a linear algebra procedure for retrieving a finitely atomic representing measure whenever the FEC holds, which extends Henrion-Lasserre's algorithm to the matrix setting.
We remark that those extensions from the scalar setting to the matrix setting are not straightforward applications of existing results. Indeed, the matrix-valued moment matrix is a multi-indexed \emph{block} matrix and the kernel of a truncated PSD moment matrix is a linear subspace rather than a real radical ideal, which make the matrix setting more involved.
\item We establish a moment-SOS hierarchy for solving \eqref{eq::RCPSO} with SOS-convexity.
Here, the main difficulty stems from the nonlinearity of $P(\by, \bx)$ in $\by$. To overcome this obstacle, we reformulate \eqref{eq::RCPSO} to a conic optimization problem via the Lagrange dual theory, and then replace the conic constraints with more tractable matrix quadratic module constraints or matrix-valued pseudo-moment cone constraints.
This yields a sequence of upper bounds on the optimum of \eqref{eq::RCPSO} with guaranteed asymptotic convergence. 
More importantly, we show that if the FEC holds, then finite convergence occurs and we can extract a globally optimal solution $\by^{\star}$ of \eqref{eq::RCPSO} as well as the points $\bx\in\Delta(\by^{\star})$ and the corresponding vectors $\bv$, where 
\begin{equation}\label{eq::deltay}
\Delta(\by^{\star})\coloneqq\{\bx\in\X \mid \exists \bv\in\RR^m\, 
\ { \text{ s.t. } }\ P(\by^{\star}, \bx)\bv=0\}
\end{equation}
is the index set of constraints active at $\by^{\star}$. 
\item
For the linear case of \eqref{eq::RCPSO}, we show that the dual problem is exactly the generalized matrix-valued moment problem and our SOS relaxations recover the matrix SOS relaxations proposed by Scherer and Hol \cite{SH2006}. 
As a complement to the matrix SOS relaxations in \cite{SH2006}, 
the dual matrix-valued moment relaxation allows us to detect finite convergence and to extract optimal solutions.
As an application, we provide a solution to the problem of minimizing the smallest eigenvalue of a polynomial matrix over a set defined by a PMI. 
\item
In case that the SOS-convexity assumption of \eqref{eq::RCPSO} is weakened to convexity, we also provide a sequence of SDPs that can approximate the optimal value of \eqref{eq::RCPSO} as closely as desired. Moreover, finite convergence can be detected via certain FECs. For the general non-convex case, we give respectively conditions under which a lower bound on the optimum, an upper bound on the optimum, or a globally optimal solution can be retrieved.
\end{enumerate}

While the moment-SOS hierarchy provides a powerful framework for scalar polynomial optimization, extending it to handle robust PMI constraints is a non-trivial task even in the SOS-convex case. A straightforward idea would be scalarizing the PMI constraint via the equivalence relation: an $m\times m$ polynomial matrix $P\succeq0$ if and only if $\bz^{\intercal}P\bz\ge0$ for all $\bz\in\RR^m$. However, this approach bears two drawbacks: 1) it necessitates $m$ auxiliary variables which increases the problem size; 2) the information on the structure (e.g., sparsity) of the polynomial matrix becomes less transparent. 
By contrast, our approach natively treats \eqref{eq::RCPSO} at the matrix level, thus preserving its matrix structure and allowing the theory of matrix-valued measures to come into play. The information on matrix structures can be exploited to develop a structured moment-SOS hierarchy in order to tackle large-scale application problems (which will be addressed in a follow-up paper).

\vskip 7pt
The rest of the paper is organized as follows. 
We first recall some preliminaries in Section \ref{sec::pre}.
Then, we consider the truncated matrix-valued $\X$-moment problem in Section \ref{sec::mrecovery} and propose a linear algebra procedure for retrieving the representing measure.
In Section \ref{sec::CRSDP}, we construct a moment-SOS hierarchy for solving \eqref{eq::RCPSO} with SOS-convexity, and treat some special cases. Extensions of the results to the general convex case and the non-convex case are discussed in Section \ref{sec::extension}.
Conclusions are given in Section \ref{sec::conclusions}.

\section{Preliminaries}\label{sec::pre}
We collect some notation and basic concepts
which will be used in this paper. We denote by $\bx$ (resp., $\by$)
the $n$-tuple (resp., $\ell$-tuple) of variables $(x_1,\ldots,x_n)$ (resp.,
$(y_1,\ldots,y_\ell)$).
The symbol $\N$ (resp., $\RR$) denotes
the set of nonnegative integers (resp., real numbers). 
Denote by $\RR^m$ (resp. $\RR^{l_1\times l_2}$, $\bS^{m}$) the $m$-dimensional real vector (resp. $l_1\times l_2$ real matrix, $m\times m$ symmetric real matrix) space. For $\bv\in\RR^m$ (resp., $N\in\RR^{l_1\times l_2}$), the symbol 
$\bv^{\intercal}$ (resp., $N^\intercal$) denotes the transpose of $\bv$ (resp., $N$). For a matrix $N\in\RR^{m\times m}$, $\tr{N}$ denotes its trace. For two matrices $N_1$ and $N_2$, $N_1\otimes N_2$ denotes the 
Kronecker product of $N_1$ and $N_2$.
For two matrices $N_1$ and $N_2$ of the same size, $\langle N_1, N_2\rangle$ denotes the 
inner product $\tr{N_1^{\intercal}N_2}$ of $N_1$ and $N_2$.
The notation $I_m$ denotes the $m\times m$ identity matrix.
For any $t\in \RR$, $\lceil t\rceil$ denotes the smallest
integer that is not smaller than $t$. For $\bu\in \RR^m$,
$\Vert \bu\Vert$ denotes the standard Euclidean norm of $\bu$.
For a vector $\ba=(\alpha_1,\ldots,\alpha_n)\in\N^n$,
let $\vert\ba\vert=\alpha_1+\cdots+\alpha_n$. For a set $A$, we use $\vert A\vert$ to denote its cardinality.
For $k\in\N$, let $\N^n_k\coloneqq\{\ba\in\N^n\mid \vert\ba\vert\le k\}$ and 
$\snd{k}=\binom{n+k}{k}$ be its cardinality.
For variables $\bx \in \RR^n$ and $\ba\in\N^n$, $\bx^{\ba}$ denotes the monomial
$x_1^{\alpha_1}\cdots x_n^{\alpha_n}$.
Let $\RR[\bx]$ (resp. $\bS[\bx]^m$) denote 
the set of real polynomials (resp. $m\times m$ symmetric real polynomial matrices) in $\bx$.
For $h\in\RR[\bx]$, we denote by $\nabla_{\bx}(h)$ its gradient vector and 
by $\nabla_{\bx\bx}(h)$ its Hessian matrix.
For $h\in\RR[\bx]$, we denote by $\deg(h)$ its (total) degree.
For $k\in\N$, denote by $\RR[\bx]_k$ the set of polynomials
in $\RR[\bx]$ of degree up to $k$.
For a polynomial $f(\bx)\in\RR[\bx]$, if there exist polynomials $f_1(\bx),\ldots,f_t(\bx)$ such that $f(\bx)=\sum_{i=1}^tf_i(\bx)^2$,
then we call $f(\bx)$ an SOS polynomial.
For a $\RR$-vector space $A$, denote by $A^*$ the dual space of 
linear functionals from $A$ to $\RR$. Given a cone $B\subseteq A$, 
its dual cone is $B^*\coloneqq\{L\in A^*\mid L(b)\ge 0, \ \forall b\in B\}$.

\subsection{SOS polynomial matrices, SOS-convexity and positivstellensatz for polynomial matrices}

For an $l_1\times l_2$ polynomial matrix $T(\bx)=[T_{ij}(\bx)]$, define
\[
\deg(T)\coloneqq\max\,\{\deg(T_{ij}) \mid i=1,\ldots,l_1,j=1,\ldots,l_2\}.
\]
A polynomial matrix $\Sigma(\bx)\in\bS[\bx]^{q}$ is said to be an \emph{SOS matrix} if there exists an $l\times q$ polynomial matrix $T(\bx)$ for some
$l\in\N$ such that $\Sigma(\bx)=T(\bx)^{\intercal}T(\bx)$. For $d\in\N$, denote by $u_d(\bx)$
the canonical basis of $\RR[\bx]_d$, i.e.,
\begin{equation}\label{eq::ud}
	u_d(\bx)\coloneqq[1,\ x_1,\ x_2,\ \cdots,\ x_n,\ x_1^2,\ x_1x_2,\ \cdots,\
	x_n^d]^{\intercal},
\end{equation}
whose cardinality is $\snd{d}=\binom{n+d}{d}$. 
With $d=\deg(T)$,
we can write $T(\bx)$ as
\[
	T(\bx)=Q(u_d(\bx)\otimes I_q) \text{ with } 
 Q=[Q_1,\ldots,Q_{\snd{d}}],\quad Q_i\in\RR^{l\times q},
\]
where $Q$ is the vector of coefficient matrices of $T(\bx)$ with respect to
$u_d(\bx)$. Hence, $\Sigma(\bx)$ is an SOS matrix with respect to $u_d(\bx)$ if there
exists some $Q\in\RR^{l\times q\snd{d}}$ satisfying 
\[
	\Sigma(\bx)=T(\bx)^{\intercal}T(\bx)=(u_d(\bx)\otimes I_q)^{\intercal}(Q^{\intercal}Q)(u_d(\bx)\otimes I_q). 
\]
We thus have the following result.
\begin{proposition}{\upshape \cite[Lemma 1]{SH2006}}\label{prop::SOSrep}
A polynomial matrix $\Sigma(\bx)\in\bS[\bx]^{q}$ is an SOS matrix with
respect to the monomial basis $u_d(\bx)$ if and only if there exists $Z\in\mathbb{S}_+^{q\snd{d}}$ such that  
\[\Sigma(\bx)=(u_d(\bx)\otimes I_q)^{\intercal} Z (u_d(\bx)\otimes I_q).\]
\end{proposition}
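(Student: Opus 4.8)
The plan is to establish both implications by passing through the \emph{Gram matrix} factorization and exploiting the elementary fact that a symmetric matrix $Z$ is positive semidefinite if and only if it can be written as $Z=Q^{\intercal}Q$ for some matrix $Q$. Throughout, the bridge between the two formulations is the identity
\[
T(\bx)^{\intercal}T(\bx)=(u_d(\bx)\otimes I_q)^{\intercal}(Q^{\intercal}Q)(u_d(\bx)\otimes I_q),
\]
which is exactly the computation carried out in the paragraph preceding the proposition, once $T(\bx)$ is expanded in the monomial basis as $T(\bx)=Q(u_d(\bx)\otimes I_q)$.

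For the ``only if'' direction, I would suppose $\Sigma(\bx)$ is an SOS matrix with respect to $u_d(\bx)$, i.e.\ $\Sigma(\bx)=T(\bx)^{\intercal}T(\bx)$ for some $l\times q$ polynomial matrix $T(\bx)$ with $\deg(T)\le d$. Since every entry of $T(\bx)$ lies in $\RR[\bx]_d$ and $u_d(\bx)$ is a basis of that space, I can collect the coefficient matrices of $T(\bx)$ to write $T(\bx)=Q(u_d(\bx)\otimes I_q)$ with $Q=[Q_1,\ldots,Q_{\snd{d}}]$, $Q_i\in\RR^{l\times q}$. Substituting into $\Sigma=T^{\intercal}T$ and setting $Z\coloneqq Q^{\intercal}Q$ then yields the claimed representation, and $Z\in\mathbb{S}_+^{q\snd{d}}$ because it is a Gram matrix.

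For the ``if'' direction, I would suppose there exists $Z\in\mathbb{S}_+^{q\snd{d}}$ with $\Sigma(\bx)=(u_d(\bx)\otimes I_q)^{\intercal}Z(u_d(\bx)\otimes I_q)$. Being symmetric positive semidefinite, $Z$ admits a factorization $Z=Q^{\intercal}Q$ (obtained, e.g., from its spectral decomposition or a Cholesky-type factorization), where $Q\in\RR^{l\times q\snd{d}}$ for a suitable $l$ (one may take $l=\rank(Z)$). Defining $T(\bx)\coloneqq Q(u_d(\bx)\otimes I_q)$, which is a genuine polynomial matrix of degree at most $d$, the same displayed identity gives $\Sigma(\bx)=T(\bx)^{\intercal}T(\bx)$, so $\Sigma$ is an SOS matrix with respect to $u_d(\bx)$.

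I do not anticipate any substantial obstacle: the result is a matrix-valued analogue of the classical scalar Gram-matrix (Cholesky) characterization of SOS polynomials, and the only point demanding care is the bookkeeping of the Kronecker product $u_d(\bx)\otimes I_q$, namely verifying that the block partition $Q=[Q_1,\ldots,Q_{\snd{d}}]$ indexed by the monomials in $u_d(\bx)$ is compatible with the $q\times q$ block structure of $Z$ so that both sides of the identity agree entrywise. Once the dimensions and the monomial indexing are aligned, both directions collapse to the single displayed identity together with the existence of the PSD factorization.
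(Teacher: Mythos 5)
Your proposal is correct and follows essentially the same route the paper itself takes: it expands $T(\bx)=Q(u_d(\bx)\otimes I_q)$ in the monomial basis, observes the identity $\Sigma(\bx)=(u_d(\bx)\otimes I_q)^{\intercal}(Q^{\intercal}Q)(u_d(\bx)\otimes I_q)$, and then uses the equivalence between positive semidefiniteness of $Z$ and the existence of a factorization $Z=Q^{\intercal}Q$ to get both directions. The paper states the proposition as an immediate consequence of exactly this displayed computation (citing Scherer--Hol), so your argument fills in precisely the intended details with no gap.
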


Now let us recall some basic concepts about SOS-convexity.
\begin{definition}{\upshape \cite{SRCSHN}}\label{SOS-convex}
A polynomial $h\in\RR[\by]$ is \emph{SOS-convex} if its Hessian $\nabla_{\by\by} h (\by)$ is an SOS matrix.
\end{definition}

While checking the convexity of a polynomial 
is generally NP-hard \cite{Ahmadi2013}, the SOS-convexity can be justified numerically by solving an SDP by Proposition \ref{prop::SOSrep}. Recall that

\begin{definition}
A polynomial matrix $Q(\by)\in\bS[\by]^m$ is 
\emph{PSD-convex} if 
\[
t Q(\by^{(1)}) + (1-t) Q(\by^{(2)}) \succeq  Q(t\by^{(1)}+(1-t)\by^{(2)})
\]
holds for any $\by^{(1)}, \by^{(2)}\in\RR^{\ell}$ and $t\in (0, 1)$.
\end{definition}

Nie \cite{N2011} gave an extension of SOS-convexity to polynomial matrices.
\begin{definition}{\upshape \cite{N2011}}\label{psd-sos-convex}
A polynomial matrix $Q(\by)\in\bS[\by]^m$ is 
\emph{PSD-SOS-convex}
if for every $\bv\in\RR^m$, there exists a polynomial matrix
$F_{\bv}(\by)$ in $\by$ such that
\[
\nabla_{\by\by}(\bv^{\intercal}Q(\by)\bv)=F_{\bv}(\by)^{\intercal}F_{\bv}(\by).
\]
\end{definition}

In other words, $Q(\by)$ is PSD-SOS-convex if and 
only if $\bv^{\intercal}Q(\by)\bv$ is an SOS-convex polynomial for each $\bv\in\RR^m$.
Clearly, if $Q(\by)$ is PSD-SOS-convex, then it is PSD-convex, 
but not vice versa. The PSD-SOS-convexity condition requires checking the Hessian $\nabla_{\by\by}(\bv^{\intercal}Q(\by)\bv)$ for every
$\bv\in\RR^m$, which is hard in general. See Appendix \ref{secA0} for a stronger and easier-to-check condition called uniform PSD-SOS-convexity given in \cite{N2011}.

\vskip 5pt

We next recall the Positivstellens\"atz for polynomial matrices obtained in \cite{SH2006}.
Define the bilinear mapping 
\[
(\,\cdot\,,\,\,\cdot\,)_m \colon \RR^{mq\times mq}\times \RR^{q\times q} \to \RR^{m\times m},\quad
(A, B)_m=\trm{A^{\intercal}(I_m\otimes B)}, 
\]
with
\[
\trm{C}\coloneqq\left[
\begin{array}{ccc}
\tr{C_{11}} & \cdots & \tr{C_{1m}}\\
\vdots & \ddots & \vdots\\
\tr{C_{m1}} & \cdots & \tr{C_{mm}}
\end{array}
\right]
\quad\text{for } C\in\RR^{mq\times mq}, C_{jk}\in\RR^{q\times q}.
\]

\begin{assumption}\label{assump2}
{\rm
For the defining matrix $G(\bx)$ of $\X$ in \eqref{defineX}, there exists $r\in\RR$ and an SOS polynomial matrix 
$\Sigma(\bx)\in\bS[\bx]^{q}$ such that $r^2 - \Vert \bx\Vert^2 - \langle \Sigma(\bx), G(\bx)\rangle$ is an SOS.}
\end{assumption}
%

\begin{theorem}{\upshape \cite[Corollary 1]{SH2006}}\label{th::psatz}
Let Assumption \ref{assump2} hold and $F(\bx)\in\bS[\bx]^{m}$ be positive definite on $\X$.
Then there exist SOS polynomial matrices $\Sigma_0(\bx)\in\bS[\bx]^{m}$ and 
$\Sigma_1(\bx)\in\bS[\bx]^{mq}$ such that 
\[F(\bx)=\Sigma_0(\bx)+(\Sigma_1(\bx), G(\bx))_m.\]
\end{theorem}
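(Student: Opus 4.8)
The plan is to show that $F$ lies in the matrix quadratic module
\[
M(G)\coloneqq\{\Sigma_0+(\Sigma_1,G(\bx))_m \mid \Sigma_0\in\bS[\bx]^{m},\ \Sigma_1\in\bS[\bx]^{mq}\ \text{SOS}\},
\]
which is precisely the asserted representation. I would argue by duality, in the spirit of Putinar's proof of his Positivstellensatz, combining Haviland's theorem for polynomial matrices (Theorem \ref{th::haviland}) with the Archimedean condition of Assumption \ref{assump2}.

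First I would record the easy inclusion $M(G)\subseteq\mathcal{P}^m(\X)$: an SOS matrix is PSD everywhere, and for $\bx\in\X$ one has $G(\bx)\succeq0$, whence $(\Sigma_1(\bx),G(\bx))_m\succeq0$ by the very design of the bilinear map $(\,\cdot\,,\,\cdot\,)_m$ (writing $\Sigma_1=U^{\intercal}U$, the block-trace operation sends a PSD pair to a PSD matrix). Thus every element of $M(G)$ is PSD on $\X$, and, after truncating degrees, $M(G)$ is a convex cone in the finite-dimensional space $\bS[\bx]^{m}_{2d}$.

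Next I would assume for contradiction that $F\notin M(G)$ and separate. Hahn--Banach then produces a nonzero linear functional $L$ on $\bS[\bx]^m$ with $L\ge0$ on $M(G)$ and $L(F)\le0$. Positivity on $M(G)$ says exactly that $L(\Sigma_0)\ge0$ for every SOS matrix $\Sigma_0$ (PSDness of the associated moment matrix) and $L((\Sigma_1,G)_m)\ge0$ for every SOS $\Sigma_1$ (PSDness of the $G$-localizing matrix). The crux is then to upgrade $L$ to a \emph{tracial $\X$-moment functional}: invoking Assumption \ref{assump2}, the positivity of these moment and localizing data forces $L$ to be represented by a PSD matrix-valued measure $\Phi$ supported on $\X$, i.e. $L(\cdot)=\int_{\X}\langle\,\cdot\,,\ud\Phi\rangle$. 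Equivalently, the Archimedean condition promotes positivity on the module $M(G)$ to positivity on all of $\mathcal{P}^m(\X)$, at which point Haviland's theorem (Theorem \ref{th::haviland}) supplies the representing measure. Since $F\succ0$ on the compact set $\X$, there is $\epsilon>0$ with $F\succeq\epsilon I_m$ on $\X$, so
\[
L(F)=\int_{\X}\langle F(\bx),\ud\Phi(\bx)\rangle\ge\epsilon\int_{\X}\langle I_m,\ud\Phi\rangle>0,
\]
contradicting $L(F)\le0$. Hence $F\in M(G)$, as claimed.

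The main obstacle is the upgrade step, namely the matrix analogue of Putinar's solution of the Archimedean moment problem. Here the Archimedean certificate that $r^2-\Vert\bx\Vert^2-\langle\Sigma(\bx),G(\bx)\rangle$ is an SOS is indispensable: it bounds the support and yields the compactly supported representing measure from the moment data, and it is also what makes the relevant truncated cone closed, so that the separation above produces a genuinely representing $L$ rather than one living only in a closure. The matrix-valued setting complicates the usual GNS/spectral construction, since the moment matrices are multi-indexed \emph{block} matrices and the representing object is a PSD matrix-valued measure rather than a scalar one; this is exactly the content packaged into Haviland's theorem for polynomial matrices. A secondary subtlety, settled by the strict positivity $F\succ0$ together with the Archimedean bound, is to guarantee that $F$ belongs to $M(G)$ itself and not merely to its closure.
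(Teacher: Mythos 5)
First, note that the paper itself does not prove this statement: Theorem \ref{th::psatz} is imported verbatim from Scherer and Hol \cite{SH2006}, whose own argument proceeds quite differently from yours (scalarize $\bv^{\intercal}F(\bx)\bv$ on $\X$ times the unit sphere, apply the scalar Putinar Positivstellens\"atz in the joint variables $(\bx,\bv)$, and then perform a degree reduction in $\bv$ to recover SOS \emph{matrices}). Your duality route is a legitimate alternative in principle, but as written it has two genuine gaps.

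The first is the separation step. You separate $F$ from the matrix quadratic module $\QM^m(G)$ by Hahn--Banach and justify this by asserting that the Archimedean certificate ``makes the relevant truncated cone closed.'' That is not the right mechanism: closedness of truncated quadratic modules is a delicate matter that does not follow from Assumption \ref{assump2}, and even if each truncation were closed, knowing $F\notin\QM^m_k(G)$ for every $k$ does not by itself produce a single functional nonnegative on the whole module with $L(F)\le 0$. The correct device (as in Putinar's own proof) is to show that Assumption \ref{assump2} makes $I_m$ an order unit of $\QM^m(G)$, i.e., for every $H\in\bS[\bx]^{m}$ there is $N$ with $NI_m\pm H\in\QM^m(G)$, so that $I_m$ is an algebraic interior point and Eidelheit's separation theorem yields $L$ with $L\ge 0$ on $\QM^m(G)$, $L(I_m)>0$, and $L(F)\le 0$. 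That order-unit argument is nontrivial in the matrix setting and is missing from your sketch. The second gap is the ``upgrade'' step: passing from $L\ge 0$ on the module to $L$ being a tracial $\X$-moment functional with a PSD matrix-valued representing measure is precisely Theorem \ref{th::MM} (Cimpri\v{c}'s solution of the matrix-valued $\X$-moment problem), a result of the same depth as the Positivstellens\"atz you are proving. Haviland's theorem (Theorem \ref{th::haviland}) cannot be applied directly, since it requires $L\ge 0$ on all of $\mathcal{P}^m(\X)$, and getting from positivity on the module to positivity on $\mathcal{P}^m(\X)$ is exactly the hard analytic content (a GNS/spectral-theorem construction) that your proposal delegates without proof; if you do invoke Theorem \ref{th::MM}, you must also verify that its proof does not itself rely on the Scherer--Hol Positivstellens\"atz, or the argument becomes circular. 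The final contradiction from $F\succeq\epsilon I_m$ on the compact set $\X$ is fine once $L(I_m)>0$ has been secured.
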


For each $k\in\N$, we define the $k$-th \emph{truncated matrix quadratic module} $\QM^m_k(G)$ associated with $G(\bx)$ by 
\[
\QM^m_k(G)\coloneqq\left\{\Sigma_0(\bx)+(\Sigma_1(\bx), G(\bx))_m\ \middle\vert \
\begin{aligned}
&\Sigma_0\in\bS[\bx]^{m}, \Sigma_1\in\bS[\bx]^{mq},\\
&\Sigma_0, \Sigma_1\ \text{are SOS matrices},\\
&\deg(\Sigma_0), \deg((\Sigma_1, G)_m)\le 2k
\end{aligned}
\right\},
\]
and define the \emph{matrix quadratic module} by $\QM^m(G)\coloneqq\bigcup_{k\in\N}\QM^m_k(G).$
By Proposition \ref{prop::SOSrep}, checking membership in $\QM^m_k(G)$ can be accomplished with an SDP.

\subsection{Matrix-valued measures}
Now we recall some background on the theory of matrix-valued measures, which is crucial for our subsequent development.
For more details, the reader is referred to \cite{DPS2008,DS2003,DL1997,GNR2019}.
Denote by $B(\X)$ the smallest $\sigma$-algebra generated 
from the open subsets of $\X$ and by $\mathfrak{m}(\X)$ the set of all finite Borel measures on $\X$. A measure $\phi\in\mathfrak{m}(\X)$ is \emph{positive} if $\phi(\bA)\ge 0$ for all $\bA\in B(\X)$. Denote by $\mathfrak{m}_+(\X)$ the set of all finite positive Borel measures on $\X$. 
The support $\supp{\phi}$ of a Borel measure $\phi\in\mathfrak{m}(\X)$ is the (unique) smallest closed set $\bA\in B(\X)$ 
such that $\phi(\X\setminus \bA)=0$.

\begin{definition}
Let $\phi_{ij}\in\mathfrak{m}(\X)$, $i, j = 1, \ldots, m$. 
The $m\times m$ matrix-valued measure $\Phi$ on $\X$ is defined as the matrix-valued function 
$\Phi\colon B(\X) \to \RR^{m\times m}$ with
\[\Phi(\bA)\coloneqq[\phi_{ij}(\bA)]\in \RR^{m\times m}, \quad \forall \bA\in B(\X). \]
If $\phi_{ij}=\phi_{ji}$ for all $i, j =1, \ldots, m$, we call $\Phi$ a symmetric matrix-valued measure.
If $\bv^{\intercal} \Phi(\bA) \bv\ge 0$ holds for all $\bA\in B(\X)$ and for all column vectors $\bv\in\RR^m$, we call $\Phi$ a PSD matrix-valued measure. The set $\supp{\Phi}\coloneqq\bigcup_{i,j=1}^m \supp{\phi_{ij}}$
is called the support of the matrix-valued measure $\Phi$. 
A function $h \colon \X \to \RR$ is called $\Phi$-measurable if $h$ is $\phi_{ij}$-measurable for every $i,j = 1, \ldots, m$. The matrix-valued integral of $h$ with respect to the measure $\Phi$ is defined by 
\[\int_{\X} h(\bx) \ud \Phi(\bx)\coloneqq\left[\int_{\X}h(\bx)\ud \phi_{ij}(\bx)\right]_{i,j=1,\ldots,m}\in\RR^{m\times m}.\]
\end{definition}

We denote by $\mathfrak{M}^m(\X)$ (resp. $\mathfrak{M}^m_+(\X)$) the set of all $m\times m$ (resp. PSD) symmetric matrix-valued measures on $\X$. 


\begin{definition}
A \emph{finitely atomic PSD} matrix-valued measure $\Phi\in\mathfrak{M}^m_+(\X)$ is a matrix-valued measure of form $\Phi=\sum_{i=1}^r W_i \delta_{\bx^{(i)}}$ 
where $W_i\in\bS_+^m$, $\bx^{(i)}$'s are distinct points in $\X$, and $\delta_{\bx^{(i)}}$ denotes the Dirac measure centered at $\bx^{(i)}$, $i=1,\ldots,r$.
\end{definition}

\begin{definition}\label{def::tf}
A linear functional $\mathscr{L} \colon \bS[\bx]^{m} \to \RR$ is called a tracial $\X$-moment functional if there exists a matrix-valued measure $\Phi\in \mathfrak{M}_+^m(\X)$ such that 
\begin{equation}\label{eq::L}
\supp{\Phi}\subseteq \X,\ 
\mathscr{L}(F)=\int_{\X}\ \tr{F(\bx)\ud \Phi(\bx)}
 =\sum_{i,j}\int_{\X}\ F_{ij}(\bx)\ud \phi_{ij}(\bx),
\ \ \forall F(\bx)\in\bS[\bx]^{m}.
\end{equation}
The matrix-valued measure $\Phi\in\mathfrak{M}_+^m(\X)$ is called a \emph{representing measure} of $\mathscr{L}$ and we write $\mathscr{L}_{\Phi}$ for $\mathscr{L}$ to indicate the associated measure. 
\end{definition}

Now we define the convex cones
\begin{equation}\label{eq::lm}
\mathcal{L}^m(\X)\coloneqq\{\mathscr{L} \colon \bS[\bx]^{m} \to \RR
\mid \mathscr{L}\ \text{is a tracial $\X$-moment functional}\},
\end{equation}
and 
\begin{equation}\label{eq::pm}
\mathcal{P}^m(\X)\coloneqq\left\{F(\bx)\in\bS[\bx]^{m} \mid F(\bx)\succeq 0, \,\forall \bx\in \X\right\}.
\end{equation}

The following theorem is a matrix version of Haviland’s theorem (\cite{CIMPRIC2013,SCH1987}).
\begin{theorem}[Haviland’s theorem for polynomial matrices]{\upshape \cite[Theorem 3]{CIMPRIC2013}}\label{th::haviland}
A linear functional $\mathscr{L}$ is a tracial $\X$-moment functional
if and only if 
$\mathscr{L}(F)\ge 0$ for all $F(\bx)\in\mathcal{P}^m(\X)$. 
\end{theorem}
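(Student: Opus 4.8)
The plan is to prove the two implications separately, reducing the matrix statement to the classical scalar Haviland theorem combined with a Radon--Nikodym argument. The ``only if'' direction is a pointwise positivity fact. Suppose $\mL=\mL_{\Phi}$ for some $\Phi\in\mathfrak{M}_+^m(\X)$. Writing $\mu\coloneqq\tr{\Phi(\cdot)}$ for the (scalar, positive, finite) trace measure and letting $H(\bx)\succeq0$ be the matrix-valued Radon--Nikodym density of $\Phi$ with respect to $\mu$ (which exists and is PSD $\mu$-a.e.\ precisely because $\Phi$ is a PSD matrix-valued measure), one has, for every $F\in\mathcal{P}^m(\X)$,
\[
\mL(F)=\int_{\X}\tr{F(\bx)\,\ud\Phi(\bx)}=\int_{\X}\tr{F(\bx)H(\bx)}\,\ud\mu(\bx)\ge0,
\]
since $\tr{AB}\ge0$ whenever $A,B\in\bS_+^m$ and $F(\bx)\succeq0$ on $\X$.

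For the converse, assume $\mL(F)\ge0$ for all $F\in\mathcal{P}^m(\X)$. First restrict $\mL$ to scalar multiples of the identity: the functional $L_0(p)\coloneqq\mL(p\,I_m)$ on $\RR[\bx]$ is nonnegative on $\{p\mid p\ge0\text{ on }\X\}$, because $p\,I_m\in\mathcal{P}^m(\X)$ for such $p$. The classical scalar Haviland theorem then furnishes a positive Borel measure $\nu\in\mathfrak{m}_+(\X)$ with $L_0(p)=\int_{\X}p\,\ud\nu$. Next, for the symmetric matrix units $E_{ij}$, set $\mL_{ij}(p)\coloneqq\mL(p\,E_{ij})$. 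The key step is a Cauchy--Schwarz-type domination: for every $\bv\in\RR^m$ and every $p\ge0$ on $\X$ the matrix $p(\bx)\,\bv\bv^{\intercal}$ is PSD on $\X$, so $p\mapsto\mL(p\,\bv\bv^{\intercal})$ is a positive functional, whence $[\mL(p\,E_{ij})]_{i,j}$ is a PSD matrix for each such $p$. Bounding its diagonal entries via $p(I_m-e_ie_i^{\intercal})\succeq0$, which gives $\mL(p\,e_ie_i^{\intercal})\le L_0(p)$, one obtains $\lvert\mL_{ij}(p)\rvert\le L_0(p)=\int_{\X}p\,\ud\nu$.

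This domination lets each $\mL_{ij}$ extend to a bounded linear functional on $L^1(\nu)$; by the $L^1$--$L^\infty$ duality (Radon--Nikodym) there are densities $g_{ij}\in L^\infty(\nu)$ with $\mL_{ij}(p)=\int_{\X}p\,g_{ij}\,\ud\nu$. Setting $\ud\phi_{ij}=g_{ij}\,\ud\nu$ and $\Phi\coloneqq[\phi_{ij}]$ (symmetric since $\mL$ acts on symmetric matrices, so $\phi_{ij}=\phi_{ji}$) produces the candidate representing measure, supported on $\X$. It remains to verify PSD-ness: for each fixed $\bv$ and every $p\ge0$, writing $G(\bx)=[g_{ij}(\bx)]$ gives $\int_{\X}p\,(\bv^{\intercal}G\bv)\,\ud\nu=\mL(p\,\bv\bv^{\intercal})\ge0$, which forces $\bv^{\intercal}G(\bx)\bv\ge0$ $\nu$-a.e.; running $\bv$ over a countable dense subset of $\RR^m$ yields $G(\bx)\succeq0$ $\nu$-a.e., so $\Phi\in\mathfrak{M}_+^m(\X)$. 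Linearity together with the decomposition $F=\sum_{i\le j}F_{ij}E_{ij}$ then gives $\mL(F)=\sum_{i,j}\int_{\X}F_{ij}\,\ud\phi_{ij}=\int_{\X}\tr{F\,\ud\Phi}$ for all $F\in\bS[\bx]^m$, i.e.\ $\mL=\mL_{\Phi}$.

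The step I expect to be the main obstacle is the measure-theoretic core of the converse: establishing the uniform domination $\lvert\mL_{ij}(p)\rvert\le L_0(p)$ and then converting the dominated functionals into genuine $\nu$-absolutely-continuous measures carrying an a.e.\ PSD density matrix. When $\X$ is noncompact one cannot route the argument through $C(\X)$ and the Riesz representation theorem directly, so the $L^1(\nu)$ extension and the Radon--Nikodym step must be handled with care. The subtle point is that PSD-ness of the density can be secured only $\nu$-almost-everywhere, via a countable dense set of test vectors $\bv$, and it is this a.e.\ statement that makes the assembled $\Phi$ a legitimate element of $\mathfrak{M}_+^m(\X)$.
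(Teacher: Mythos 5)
The paper itself does not prove this theorem; it is imported verbatim from Cimpri\v{c}--Zalar \cite[Theorem 3]{CIMPRIC2013}, so your attempt must stand on its own. Your ``only if'' direction is correct: since $\Phi(\bA)\succeq 0$ gives $2\vert\phi_{ij}(\bA)\vert\le\phi_{ii}(\bA)+\phi_{jj}(\bA)\le\mu(\bA)$ for the trace measure $\mu=\tr{\Phi(\cdot)}$, the matrix density $H$ exists, is PSD $\mu$-a.e., and $\tr{F(\bx)H(\bx)}\ge 0$ pointwise on $\X$. In the converse, the scalar reduction producing $\nu$, the PSD-ness of $[\mathscr{L}(pE_{ij})]_{i,j}$ for $p\ge 0$ on $\X$, and the diagonal domination $\vert\mathscr{L}(pE_{ij})\vert\le L_0(p)$ are also sound.

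The genuine gap sits exactly where you flagged it, and it is not a matter of ``handling with care'': two of your steps are false for noncompact closed $\X$, which is the actual content of a Haviland-type theorem (the compact case already follows from Riesz representation). First, the domination $\vert\mathscr{L}_{ij}(p)\vert\le L_0(p)$ holds only on the cone of polynomials nonnegative on $\X$; to extend $\mathscr{L}_{ij}$ to a bounded functional on $L^1(\nu)$ you need $\vert\mathscr{L}_{ij}(q)\vert\le C\int_{\X}\vert q\vert\,\ud\nu$ for \emph{all} polynomials $q$, which requires polynomial majorants of $\vert q\vert$ that are close in $L^1(\nu)$ --- available via Stone--Weierstrass when $\X$ is compact, unavailable in general. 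Second, and fatally, the final step ``$\int_{\X}p\,(\bv^{\intercal}G\bv)\,\ud\nu\ge 0$ for all polynomials $p\ge 0$ on $\X$ forces $\bv^{\intercal}G\bv\ge 0$ $\nu$-a.e.'' fails whenever the moment problem for $\nu$ is indeterminate. Stieltjes' classical example: on $\X=[0,\infty)$ take $\ud\nu=w(x)\,\ud x$ with $w(x)=x^{-1}e^{-(\log x)^2/2}/\sqrt{2\pi}$ and $h(x)=\sin(2\pi\log x)$; then $\int_{\X} x^k h\,\ud\nu=0$ for every $k\in\N$, so $\int_{\X}p\,h\,\ud\nu\ge 0$ for every polynomial $p$, yet $h<0$ on a set of positive $\nu$-measure. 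Hence for $m\ge 2$ your assembled $\Phi$ need not be PSD, and PSD-ness is precisely what membership in $\mathfrak{M}^m_+(\X)$ requires. (A related defect: the entry measures of an actual representing measure need not be absolutely continuous with respect to your particular $\nu$, since a \emph{different} solution of the same scalar moment problem may carry them.) The proof in the cited source circumvents all of this by first extending $\mathscr{L}$, positivity preserved, from matrix polynomials to a function space rich enough to contain compactly supported continuous functions (M.~Riesz extension / adapted spaces), and only then constructing the measure; that extension step is the missing idea, and without it your argument proves the theorem only for compact $\X$, not for the closed sets the statement covers.
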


\begin{proposition}{\upshape \cite[Proposition 10.3.3]{NieBook}}
The cones $\mathcal{L}^m(\X)$ and $\mathcal{P}^m(\X)$ are dual to each other, i.e., $\mathcal{L}^m(\X)=\mathcal{P}^m(\X)^*$ and $\mathcal{P}^m(\X)={\mathcal{L}^m(\X)}^*$.
\end{proposition}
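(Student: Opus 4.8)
The plan is to derive the first equality directly from Haviland's theorem for polynomial matrices and to establish the second equality by an elementary separation argument using finitely atomic PSD measures.

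First, I would observe that the equality $\mathcal{L}^m(\X)=\mathcal{P}^m(\X)^*$ is essentially a restatement of Theorem \ref{th::haviland}. By the definition of the dual cone, $\mathcal{P}^m(\X)^*$ consists of exactly those linear functionals $\mathscr{L}\colon\bS[\bx]^m\to\RR$ satisfying $\mathscr{L}(F)\ge0$ for all $F\in\mathcal{P}^m(\X)$. Haviland's theorem asserts that this nonnegativity condition holds if and only if $\mathscr{L}$ is a tracial $\X$-moment functional, i.e., $\mathscr{L}\in\mathcal{L}^m(\X)$. Hence the two cones coincide, and no further work is needed here.

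Second, for the equality $\mathcal{P}^m(\X)=\mathcal{L}^m(\X)^*$, I would prove the two inclusions separately. For $\mathcal{P}^m(\X)\subseteq\mathcal{L}^m(\X)^*$, take $F\in\mathcal{P}^m(\X)$ and any $\mathscr{L}_\Phi\in\mathcal{L}^m(\X)$ with representing measure $\Phi\in\mathfrak{M}^m_+(\X)$. Using the integral formula \eqref{eq::L}, $\mathscr{L}_\Phi(F)=\int_\X\tr{F(\bx)\ud\Phi(\bx)}$. Dominating $\Phi$ by a scalar measure $\mu$ so that $\ud\Phi=D(\bx)\ud\mu$ with a PSD matrix density $D(\bx)$, the integrand equals $\tr{F(\bx)D(\bx)}$, which is nonnegative $\mu$-almost everywhere since $F(\bx)\succeq0$ on $\X$ and $D(\bx)\succeq0$ (recall $\tr{AB}\ge0$ whenever $A,B\succeq0$). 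Therefore $\mathscr{L}_\Phi(F)\ge0$, giving $F\in\mathcal{L}^m(\X)^*$. For the reverse inclusion $\mathcal{L}^m(\X)^*\subseteq\mathcal{P}^m(\X)$, I would argue by contraposition: if $F\notin\mathcal{P}^m(\X)$, then there are a point $\bx_0\in\X$ and a vector $\bv\in\RR^m$ with $\bv^\intercal F(\bx_0)\bv<0$. Consider the finitely atomic measure $\Phi=(\bv\bv^\intercal)\,\delta_{\bx_0}$; since $\bv\bv^\intercal\succeq0$ and $\bx_0\in\X$, it lies in $\mathfrak{M}^m_+(\X)$, so $\mathscr{L}_\Phi\in\mathcal{L}^m(\X)$, while $\mathscr{L}_\Phi(F)=\tr{F(\bx_0)\bv\bv^\intercal}=\bv^\intercal F(\bx_0)\bv<0$ shows $F\notin\mathcal{L}^m(\X)^*$.

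The argument is mostly routine once Theorem \ref{th::haviland} is invoked; the only point requiring genuine care is the nonnegativity of $\int_\X\tr{F\,\ud\Phi}$ for a \emph{general} (not necessarily atomic) PSD matrix-valued measure $\Phi$. The cleanest route is the domination by the scalar trace measure described above, reducing the claim to the pointwise inequality $\tr{F(\bx)D(\bx)}\ge0$; alternatively, one could note that $\mathcal{P}^m(\X)=\mathcal{L}^m(\X)^*$ follows abstractly from the first equality via the bidual relation for closed convex cones, but the direct separation by rank-one atomic measures used in the contraposition step is more transparent and sidesteps any topological subtlety.
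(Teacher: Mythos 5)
Your proof is correct. Note that the paper itself gives no proof of this proposition --- it is quoted verbatim from \cite[Proposition 10.3.3]{NieBook} --- so there is no in-paper argument to compare against; your write-up supplies the standard derivation. The first equality is indeed an immediate restatement of Theorem \ref{th::haviland} together with the definition of the dual cone. For the second equality, both inclusions are handled correctly: the rank-one atomic measure $(\bv\bv^{\intercal})\,\delta_{\bx_0}$ is exactly the right separating object for the contrapositive, and the forward inclusion does require the Radon--Nikodym-type domination you describe (take $\mu$ to be the trace measure $\sum_i \phi_{ii}$; PSDness of $\Phi$ gives $\vert\phi_{ij}(\bA)\vert \le \tfrac12(\phi_{ii}(\bA)+\phi_{jj}(\bA))$, hence absolute continuity, and a countable-dense-vectors argument gives $D(\bx)\succeq 0$ $\mu$-a.e.), after which $\tr{F(\bx)D(\bx)}\ge 0$ pointwise finishes it. The only caveat worth flagging is implicit in the statement itself rather than in your proof: writing $\mathcal{P}^m(\X)={\mathcal{L}^m(\X)}^*$ presumes the identification of $\bS[\bx]^{m}$ with its image in the double dual, which you (like the paper) adopt silently; under that convention your argument is complete.
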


\subsection{The matrix-valued $\X$-moment problem}
Let $\mathbf{S}=(S_{\ba})_{\ba\in\N^n}$ be a multi-indexed sequence of symmetric
matrices in $\bS^{m}$.
\begin{definition}\label{def::mmp}{\upshape \cite{KimseyPHD}}
For a non-empty closed set $\X\subseteq\RR^n$, the sequence 
$\mathbf{S}=(S_{\ba})_{\ba\in\N^n}\subseteq\bS^{m}$ is called a matrix-valued $\X$-moment sequence if there exists a matrix-valued measure 
$\Phi=[\phi_{ij}]\in\mathfrak{M}^m_+(\X)$ such 
that 
\begin{equation}\label{eq::S}
\supp{\Phi}\subseteq \X\quad\text{and}\quad 
S_{\ba}=\int_{\X} \bx^{\ba}\ud \Phi(\bx),\ \forall \ba\in\N^n.
\end{equation}
The measure $\Phi\in\mathfrak{M}^m_+(\X)$ satisfying \eqref{eq::S} is 
called a \emph{representing measure} of $\mS$.
\end{definition}

For a given sequence $\mS=(S_{\ba})_{\ba\in\N^n}\subseteq\bS^{m}$, we 
can define a linear functional $\mL_{\mS}  \colon \bS[\bx]^{m} \to \RR$ in the 
following way:
\[
\mL_{\mS}(F)\coloneqq\sum_{\ba\in\supp{F}}\tr{F_{\ba}S_{\ba}},\ \forall 
F(\bx)=\sum_{\ba\in\supp{F}} F_{\ba}\bx^{\ba}\in\bS[\bx]^{m},
\]
where $F_{\ba}$ is the coefficient matrix of $\bx^{\ba}$ in $F(\bx)$ and 
\[\supp{F}\coloneqq\{\ba\in\N^n \mid \bx^{\ba} \text{ appears in some } F_{ij}(\bx)\}.\]
We call $\mL_{\mS}$ the \emph{Riesz functional} associated to the sequence $\mS$.
Clearly, $\mS$ is a matrix-valued $\X$-moment sequence if and only if $\mL_{\mS}$ is a tracial 
$\X$-moment functional.

\begin{definition}\label{def::mm}
Given a sequence $\mS=(S_{\ba})_{\ba\in\N^n}\subseteq\bS^{m}$,
the associated \emph{moment matrix} $M(\mS)$ is the block matrix whose block row and block column are indexed by $\N^n$ and the $(\ba, \bb)$-th block entry is $S_{\ba+\bb}$ for all $\ba, \bb\in\N^n$. For $G\in\bS[\bx]^{q}$,
the \emph{localizing matrix} $M(G\mS)$ associated to $\mS$ and $G$ is the block matrix whose block row and block column are indexed by $\N^n$ and 
the $(\ba, \bb)$-th block entry is 
$\sum_{\bg\in\supp{G}}S_{\ba+\bb+\bg}\otimes G_{\bg}$ for all $\ba, \bb\in\N^n$. For $d\in\N$, the $d$-th order moment matrix $M_d(\mS)$ (resp. localizing matrix
$M_d(G\mS)$) is the submatrix of $M(\mS)$ (resp. $M(G\mS)$) whose block row and block column are both indexed by $\N^n_d$.
\end{definition}

The following proposition can be easily verified from the definitions.
\begin{proposition}\label{prop::Lrepre}
Let $\Sigma_0(\bx)\in\bS[\bx]^{m}$ and $\Sigma_1(\bx)\in\bS[\bx]^{mq}$ be SOS matrices such that
\[
\Sigma_0(\bx)=(u_d(\bx)\otimes I_m)^{\intercal} Z_0 (u_d(\bx)\otimes I_m) 
\text{ and } \Sigma_1(\bx)=(u_d(\bx)\otimes I_{mq})^{\intercal} Z_1 (u_d(\bx)\otimes I_{mq}),
\]
with $Z_0\in\bS_+^{m\snd{d}}$ and $Z_1\in\bS_+^{mq\snd{d}}$. Then for a sequence $\mS=(S_{\ba})_{\ba\in\N^n}\subseteq\bS^{m}$, it holds that
\[
\mL_{\mS}(\Sigma_0)=\tr{Z_0M_d(\mS)}\text{ and }
\mL_{\mS}((\Sigma_1, G)_m)=\tr{Z_1M_d(G\mS)}.
\]
\end{proposition}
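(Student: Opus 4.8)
The plan is to prove both identities by the same two-step recipe: expand each (weighted) SOS matrix into its monomial coefficients via the block partition of the Gram matrix, and then match the outcome against the block-trace expansion of the corresponding right-hand side. I note at the outset that neither symmetry nor positive semidefiniteness of $Z_0,Z_1$ is actually used; the statement is a purely algebraic identity flowing from the given representations. Partition the Gram matrices into blocks indexed by $\N^n_d$, writing $Z_0=[Z_0^{\ba\bb}]$ with $Z_0^{\ba\bb}\in\RR^{m\times m}$ and $Z_1=[Z_1^{\ba\bb}]$ with $Z_1^{\ba\bb}\in\RR^{mq\times mq}$. Since the $\ba$-block of $u_d(\bx)\otimes I_m$ (resp.\ $u_d(\bx)\otimes I_{mq}$) is $\bx^{\ba}I_m$ (resp.\ $\bx^{\ba}I_{mq}$), multiplying out the quadratic forms gives
\[
\Sigma_0(\bx)=\sum_{\ba,\bb}\bx^{\ba+\bb}Z_0^{\ba\bb},\qquad \Sigma_1(\bx)=\sum_{\ba,\bb}\bx^{\ba+\bb}Z_1^{\ba\bb}.
\]

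For the first identity, I would invoke linearity of the Riesz functional together with $\mL_{\mS}(\bx^{\bg}W)=\tr{WS_{\bg}}$ for a constant matrix $W$, obtaining $\mL_{\mS}(\Sigma_0)=\sum_{\ba,\bb}\tr{Z_0^{\ba\bb}S_{\ba+\bb}}$. On the other side, the block-trace rule $\tr{AB}=\sum_{\ba,\bb}\tr{A^{\ba\bb}B^{\bb\ba}}$ for $\N^n_d$-block matrices, applied with $A=Z_0$ and $B=M_d(\mS)$ (whose $(\bb,\ba)$-block equals $S_{\ba+\bb}$), produces exactly the same double sum. This settles $\mL_{\mS}(\Sigma_0)=\tr{Z_0M_d(\mS)}$ and is entirely routine.

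The second identity is the crux, and the main obstacle is reconciling the partial trace $\trm{\cdot}$ inside the bilinear map $(\cdot,\cdot)_m$ with the Kronecker structure of the localizing blocks. Writing $G(\bx)=\sum_{\bg}G_{\bg}\bx^{\bg}$ and using $\Sigma_1^{\intercal}=\Sigma_1$, linearity of $\trm{\cdot}$ gives $(\Sigma_1,G)_m=\sum_{\ba,\bb,\bg}\bx^{\ba+\bb+\bg}\trm{Z_1^{\ba\bb}(I_m\otimes G_{\bg})}$, whence
\[
\mL_{\mS}((\Sigma_1,G)_m)=\sum_{\ba,\bb,\bg}\tr{\trm{Z_1^{\ba\bb}(I_m\otimes G_{\bg})}\,S_{\ba+\bb+\bg}}.
\]
Meanwhile the same block-trace rule applied to $Z_1$ and $M_d(G\mS)$, whose $(\bb,\ba)$-block is $\sum_{\bg}S_{\ba+\bb+\bg}\otimes G_{\bg}$, yields $\tr{Z_1M_d(G\mS)}=\sum_{\ba,\bb,\bg}\tr{Z_1^{\ba\bb}(S_{\ba+\bb+\bg}\otimes G_{\bg})}$. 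Thus the whole statement collapses to the single matrix identity
\[
\tr{\trm{C(I_m\otimes B)}\,S}=\tr{C(S\otimes B)},\qquad C\in\RR^{mq\times mq},\ B\in\RR^{q\times q},\ S\in\RR^{m\times m}.
\]

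To finish, I would verify this identity by partitioning $C=[C_{jk}]_{j,k=1}^{m}$ into $q\times q$ blocks. Because $I_m\otimes B$ is block-diagonal with diagonal blocks $B$, the $(j,k)$-entry of $\trm{C(I_m\otimes B)}$ is $\tr{C_{jk}B}$, so the left-hand side equals $\sum_{j,k}\tr{C_{jk}B}\,S_{kj}$; computing the diagonal blocks of $C(S\otimes B)$, whose $(j,k)$-block is $S_{jk}B$, shows the right-hand side equals $\sum_{j,k}S_{kj}\tr{C_{jk}B}$, the same quantity. Substituting this identity termwise into the two displays above identifies $\mL_{\mS}((\Sigma_1,G)_m)$ with $\tr{Z_1M_d(G\mS)}$, completing the proof. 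The only genuinely delicate point throughout is bookkeeping the index transpositions in the block-trace rule and in the partial trace, which is why I single out the block-trace rule and the scalarization identity as the two reusable lemmas driving the argument.
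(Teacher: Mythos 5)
Your proof is correct and is exactly the direct verification from the definitions that the paper omits (it states only that the proposition ``can be easily verified from the definitions''). The block expansion of the Gram matrices, the block-trace rule, and the reduction of the second identity to $\tr{\trm{C(I_m\otimes B)}\,S}=\tr{C(S\otimes B)}$ are all sound, so your write-up is a valid filling-in of the intended routine argument.
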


Let $d_G\coloneqq\lceil\deg(G)/2\rceil$. For each integer $k\ge d_G$, we define the set
\[
\ms_k^m(G)\coloneqq\{\mS=(S_{\ba})_{\ba\in\N^n_{2k}}\subseteq\bS^{m} \mid 
M_k(\mS)\succeq 0,\ M_{k-d_G}(G\mS)\succeq 0\},
\]
and let $\ms^m(G)\coloneqq\bigcap_{k\ge d_G}\ms_k^m(G)$,
which are all convex cones. Checking membership in $\ms_k^m(G)$ can be accomplished with an SDP. 
Moreover, by Proposition \ref{prop::Lrepre}, $\ms_k^m(G)$ is the dual cone of $\QM^m_k(G)$.

For a given $\mS=(S_{\ba})_{\ba\in\N^n}\subseteq\bS^{m}$, the matrix-valued $\X$-moment problem 
asks when there exists a matrix-valued measure $\Phi\in\mathfrak{M}^m_+(\X)$ satisfying 
\eqref{eq::S}. A necessary condition can be derived from Proposition \ref{prop::Lrepre}.
\begin{corollary}\label{cor::nece}
    If $\mS=(S_{\ba})_{\ba\in\N^n}\subseteq\bS^{m}$ has a matrix-valued representing measure 
    $\Phi\in\mathfrak{M}^m_+(\X)$, then $\mS\in\ms^m(G).$ 
\end{corollary}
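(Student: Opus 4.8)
The plan is to verify the two defining semidefinite conditions of $\ms^m_k(G)$ separately for each integer $k\ge d_G$, namely $M_k(\mS)\succeq0$ and $M_{k-d_G}(G\mS)\succeq0$, and then intersect over $k$ to conclude $\mS\in\ms^m(G)$. The bridge between these matrix positivity conditions and the representing measure $\Phi$ is Proposition \ref{prop::Lrepre}, which identifies the traces $\tr{Z_0M_k(\mS)}$ and $\tr{Z_1M_{k-d_G}(G\mS)}$ with values of the Riesz functional $\mL_{\mS}$ on SOS matrices. Since $\mL_{\mS}$ is a tracial $\X$-moment functional with representing measure $\Phi$ (Definition \ref{def::tf}), each such value is the integral of a matrix-valued integrand against the PSD matrix-valued measure $\Phi$, which I expect to be nonnegative whenever the integrand is PSD on $\supp{\Phi}\subseteq\X$.

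For the moment matrix, I would fix an arbitrary $Z_0\in\bS_+^{m\snd{k}}$ and form the SOS matrix $\Sigma_0(\bx)=(u_k(\bx)\otimes I_m)^{\intercal}Z_0(u_k(\bx)\otimes I_m)$, which is PSD everywhere and in particular on $\X$. By Proposition \ref{prop::Lrepre} one has $\mL_{\mS}(\Sigma_0)=\tr{Z_0M_k(\mS)}$, while by the defining formula \eqref{eq::L} for $\mL_{\mS}$ one has $\mL_{\mS}(\Sigma_0)=\int_{\X}\tr{\Sigma_0(\bx)\,\ud\Phi(\bx)}$. Granting the integration lemma stated below, this integral is nonnegative, so $\tr{Z_0M_k(\mS)}\ge0$ for every $Z_0\succeq0$, whence $M_k(\mS)\succeq0$.

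For the localizing matrix, I would argue analogously, fixing $Z_1\in\bS_+^{mq\snd{k-d_G}}$ and setting $\Sigma_1(\bx)=(u_{k-d_G}(\bx)\otimes I_{mq})^{\intercal}Z_1(u_{k-d_G}(\bx)\otimes I_{mq})$. The new ingredient is the claim that $(\Sigma_1(\bx),G(\bx))_m\succeq0$ for all $\bx\in\X$. This reduces to a pointwise fact about the bilinear map: for $A\in\bS_+^{mq}$ and $B\in\bS_+^{q}$, writing $A$ in $m\times m$ blocks $A=[A_{jk}]$ with $A_{jk}\in\RR^{q\times q}$, one computes $v^{\intercal}(A,B)_mv=\tr{(v\otimes I_q)^{\intercal}A(v\otimes I_q)\,B}\ge0$ for every $v\in\RR^m$, since the compression $(v\otimes I_q)^{\intercal}A(v\otimes I_q)$ is PSD and the trace of a product of two PSD matrices is nonnegative. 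Applying this with $A=\Sigma_1(\bx)\succeq0$ and $B=G(\bx)\succeq0$ on $\X$ gives $(\Sigma_1,G)_m\succeq0$ on $\X$. Then $\mL_{\mS}((\Sigma_1,G)_m)=\tr{Z_1M_{k-d_G}(G\mS)}$ by Proposition \ref{prop::Lrepre}, and the same integration lemma yields $\tr{Z_1M_{k-d_G}(G\mS)}\ge0$ for all $Z_1\succeq0$, hence $M_{k-d_G}(G\mS)\succeq0$.

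The main obstacle is the integration lemma itself: if $\Phi\in\mathfrak{M}^m_+(\X)$ and $F\in\bS[\bx]^m$ satisfies $F(\bx)\succeq0$ on $\supp{\Phi}$, then $\int_{\X}\tr{F(\bx)\,\ud\Phi(\bx)}\ge0$. Exploiting compactness of $\X$ (Assumption \ref{assump1}(iii)) and continuity of the polynomial integrand, I expect to establish this by approximating the integral with Riemann-type sums $\sum_i\tr{F(\bx^{(i)})\Phi(\bA_i)}$ over measurable partitions $\{\bA_i\}$ of $\X$; each summand is nonnegative because $F(\bx^{(i)})\succeq0$ and $\Phi(\bA_i)\succeq0$ (a defining property of PSD matrix-valued measures), and passing to the limit preserves the inequality. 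Alternatively, the whole corollary can be phrased as a duality statement: once $\QM^m_k(G)\subseteq\mathcal{P}^m(\X)$ is established via the pointwise computation above, Haviland's theorem (Theorem \ref{th::haviland}) gives $\mL_{\mS}\ge0$ on $\mathcal{P}^m(\X)\supseteq\QM^m_k(G)$, and the duality $\ms^m_k(G)=\QM^m_k(G)^*$ noted just before the corollary immediately places $\mS$ in $\ms^m_k(G)$.
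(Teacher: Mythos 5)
Your proof is correct and takes essentially the same route as the paper's: test $M_k(\mS)$ and the localizing matrix against arbitrary PSD matrices $Z_0$, $Z_1$, convert via Proposition \ref{prop::Lrepre} to values of $\mL_{\mS}$ on the SOS matrices $\Sigma_0$ and $(\Sigma_1,G)_m$, and conclude nonnegativity because these integrands are PSD on $\X$ and $\Phi$ is a PSD matrix-valued measure. The only difference is that you spell out two sub-steps the paper leaves implicit --- the pointwise computation $v^{\intercal}(\Sigma_1(\bx),G(\bx))_m v=\tr{(v\otimes I_q)^{\intercal}\Sigma_1(\bx)(v\otimes I_q)\,G(\bx)}\ge 0$, which the paper cites from \cite{SH2006}, and the nonnegativity of the integral of a PSD integrand against a PSD matrix-valued measure, which the paper asserts without comment.
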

\begin{proof}
    For any $k\in\N$, PSD matrices $Z_0\in\bS_+^{m\snd{k}}$, and $Z_1\in\bS_+^{mq\snd{k}}$,
    let $\Sigma_0$ and $\Sigma_1$ be the SOS polynomial matrices defined in Proposition \ref{prop::Lrepre}.
    Then, for any $\bx\in\X$, we have $\Sigma_0(\bx)\succeq 0$ and $(\Sigma_1(\bx), G(\bx))_m\succeq 0$ 
    (see \cite{SH2006}). Hence, 
    \[
    \tr{Z_0M_k(\mS)}=\mL_{\mS}(\Sigma_0)=\int_{\X}\ \tr{\Sigma_0(\bx)\ud \Phi(\bx)} \ge 0
    \]
    and 
    \[
    \tr{Z_1M_k(G\mS)}=\mL_{\mS}((\Sigma_1, G)_m)=\int_{\X}\ 
    \tr{(\Sigma_1(\bx), G(\bx))_m\ud \Phi(\bx)} \ge 0.
    \]
    As $Z_0$ and $Z_1$ are arbitrary, we have $M_k(\mS)\succeq 0$ and $M_k(G\mS)\succeq 0$.
\end{proof} 

Moreover, the matrix-valued $\X$-moment problem is addressed in the following theorem.
\begin{theorem}\label{th::MM}{\upshape \cite[Theorems 5 and 6]{CIMPRIC2013}}
Let Assumption \ref{assump2} hold. Given a sequence 
$\mS=(S_{\ba})_{\ba\in\N^n}\subseteq\bS^{m}$, $\mS$ is a matrix-valued $\X$-moment sequence if and only if $\mS\in\ms^m(G)$.
\end{theorem}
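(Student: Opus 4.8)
The plan is to prove the two implications separately, with all the real content residing in the sufficiency direction.

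The necessity (``only if'') is already in hand: it is exactly Corollary \ref{cor::nece}, which shows that a representing measure $\Phi\in\mathfrak{M}^m_+(\X)$ forces every moment matrix $M_k(\mS)$ and localizing matrix $M_{k-d_G}(G\mS)$ to be PSD, i.e.\ $\mS\in\ms^m(G)$. So I would simply invoke it and move on.

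For sufficiency, assume $\mS\in\ms^m(G)$ and aim to produce a representing measure. Since $\mS$ is a matrix-valued $\X$-moment sequence precisely when its Riesz functional $\mL_{\mS}$ is a tracial $\X$-moment functional, and since Haviland's theorem (Theorem \ref{th::haviland}) characterizes the latter as $\mL_{\mS}(F)\ge 0$ for every $F\in\mathcal{P}^m(\X)$, it suffices to verify $\mL_{\mS}(F)\ge 0$ whenever $F\succeq 0$ on $\X$. The first step is to translate the PSD-ness of the moment data into positivity of $\mL_{\mS}$ on the quadratic module: membership $\mS\in\ms^m(G)$ gives $M_k(\mS)\succeq 0$ and $M_{k-d_G}(G\mS)\succeq 0$ for all $k$, and combining this with Proposition \ref{prop::Lrepre} yields $\mL_{\mS}(\Sigma_0)=\tr{Z_0M_k(\mS)}\ge 0$ and $\mL_{\mS}((\Sigma_1,G)_m)=\tr{Z_1M_{k-d_G}(G\mS)}\ge 0$ for all SOS matrices $\Sigma_0,\Sigma_1$ (with $Z_0,Z_1\succeq 0$). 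Hence $\mL_{\mS}$ is nonnegative on the entire matrix quadratic module $\QM^m(G)$.

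The second step bridges the gap to general $F\in\mathcal{P}^m(\X)$ via an $\epsilon$-perturbation. Under Assumption \ref{assump2} the set $\X$ is compact, so for $F\succeq 0$ on $\X$ and any $\epsilon>0$, the matrix $F(\bx)+\epsilon I_m$ has smallest eigenvalue $\ge\epsilon$ throughout $\X$, i.e.\ $F+\epsilon I_m$ is positive definite on $\X$. The Positivstellens\"atz (Theorem \ref{th::psatz}) then provides SOS matrices $\Sigma_0,\Sigma_1$ with $F+\epsilon I_m=\Sigma_0+(\Sigma_1,G)_m\in\QM^m(G)$, so $\mL_{\mS}(F+\epsilon I_m)\ge 0$, that is $\mL_{\mS}(F)+\epsilon\,\tr{S_0}\ge 0$, where $\mL_{\mS}(I_m)=\tr{S_0}$ is a fixed finite number (indeed $S_0=M_0(\mS)\succeq 0$). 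Letting $\epsilon\to 0^+$ gives $\mL_{\mS}(F)\ge 0$, completing the verification and hence the proof.

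The main obstacle is precisely this last bridging step: the Positivstellens\"atz certifies only polynomial matrices that are \emph{positive definite} on $\X$, whereas $\mathcal{P}^m(\X)$ asks only for PSD-ness. The perturbation $F\mapsto F+\epsilon I_m$ followed by $\epsilon\to 0$ resolves this, but it relies on two ingredients that must be checked: compactness of $\X$ (guaranteed by the Archimedean-type Assumption \ref{assump2}), which makes the perturbed matrix genuinely positive definite and lets Theorem \ref{th::psatz} apply, and the finiteness of $\mL_{\mS}(I_m)=\tr{S_0}$, which ensures the error term vanishes in the limit. The remaining work is the routine degree bookkeeping hidden in Proposition \ref{prop::Lrepre} that identifies nonnegativity of $\mL_{\mS}$ on $\QM^m(G)$ with the PSD-ness of the moment and localizing matrices.
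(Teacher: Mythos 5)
The paper does not prove this statement itself: Theorem \ref{th::MM} is quoted from \cite[Theorems 5 and 6]{CIMPRIC2013} without proof, so there is no in-paper argument to compare against line by line. Judged on its own, your proof is correct and is the canonical derivation of such a dual-side result. The necessity direction is indeed exactly Corollary \ref{cor::nece}. For sufficiency, the reduction to Haviland's matrix theorem is legitimate because the paper explicitly records that $\mS$ is a matrix-valued $\X$-moment sequence iff $\mL_{\mS}$ is a tracial $\X$-moment functional; the nonnegativity of $\mL_{\mS}$ on $\QM^m(G)$ follows from Proposition \ref{prop::Lrepre} together with $\tr{AB}\ge 0$ for PSD $A,B$ (your indexing of the localizing matrix as $M_{k-d_G}(G\mS)$ versus the $M_d(G\mS)$ of Proposition \ref{prop::Lrepre} is immaterial since all orders are PSD under $\mS\in\ms^m(G)$); and the $\epsilon$-perturbation bridging strict positivity (needed for Theorem \ref{th::psatz}) to mere PSD-ness is sound, with the error term $\epsilon\,\mL_{\mS}(I_m)=\epsilon\,\tr{S_{\mathbf{0}}}$ finite and vanishing. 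Your observation that Assumption \ref{assump2} forces $\X$ to be bounded (hence compact, being closed) is correct, though strictly speaking compactness is only needed so that the hypotheses of the Positivstellens\"atz are met, not for the perturbation itself, since $F+\epsilon I_m$ is positive definite wherever $F\succeq 0$ regardless. I see no gap.
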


\begin{remark}\label{rk::putinar}
If $m=1$, we use the notation $\QM(G)$ (resp., $\QM_k(G)$, $\ms(G)$, $\ms_k(G)$) instead of $\QM^1(G)$ (resp., $\QM_k^1(G)$, $\ms^1(G)$, $\ms_k^1(G)$) for simplicity.
For a set of polynomials $H(\bx)=\{h_1(\bx),\ldots,h_s(\bx)\}\subseteq\RR[\bx]$, by slightly abusing notation, we use $\QM^m(H)$, $\QM^m_k(H)$, $\ms^m(H)$, $\ms_k^m(H)$ to denote the related sets associated with the diagonal matrix $\diag(h_1(\bx),\ldots,h_s(\bx))$.
Then, when $m=1$, Theorems \ref{th::psatz} and \ref{th::MM} recover Putinar's Positivstellens\"atz \cite{Putinar1993} and its dual aspect for the basic semi-algebraic set $\{\bx\in\RR^n \mid h_1(\bx)\ge 0,\ldots,h_s(\bx)\ge 0\}$.
\end{remark}

\section{The FEC and matrix-valued measure recovery}\label{sec::mrecovery}
\subsection{The truncated matrix-valued $\X$-moment problem}
Recently, Kimsey and Trachana \cite{KT2022} obtained a flat extension theorem 
which provides a solution to the truncated matrix-valued moment problem. \begin{theorem}\label{th::FE}{\upshape\cite[Theorem 6.2]{KT2022} (flat extension)}
For a truncated sequence $\mS=(S_{\ba})_{\ba\in\N^n_{2k}}\subseteq\bS^{m}$,
the following statements are equivalent:
\begin{enumerate}
\item[(i)] $\mS$ admits an atomic representing measure $\Phi=\sum_{i=1}^r W_i\delta_{\bx^{(i)}}$
with $W_i\in\bS_+^{m}$, $\bx^{(i)}\in\RR^n$ and $\sum_{i=1}^r\rank(W_i)=\rank(M_k(\mS))$;
\item[(ii)] $M_k(\mS)\succeq 0$ and $\mS$ admits an extension $\tilde{\mS}=(\tilde{S}_{\ba})_{\ba\in\N^n_{2k+2}}$
such that $M_{k+1}(\tilde{\mS})\succeq 0$ and $\rank(M_k(\mS))=\rank(M_{k+1}(\tilde{\mS}))$.
\end{enumerate}
\end{theorem}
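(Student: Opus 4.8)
The plan is to prove the two implications separately, dispatching (i) $\Rightarrow$ (ii) by a direct block computation and reserving the real work for (ii) $\Rightarrow$ (i), which is the genuine flat-extension assertion.

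For (i) $\Rightarrow$ (ii) I would start from the atomic measure $\Phi=\sum_{i=1}^r W_i\delta_{\bx^{(i)}}$, write each weight in a rank decomposition $W_i=\sum_{j=1}^{\rank(W_i)}\bw_{ij}\bw_{ij}^{\intercal}$ with $\bw_{ij}\in\RR^m$, and extend $\mS$ to the full sequence $\hat\mS=(\hat S_{\ba})_{\ba\in\N^n}$ by $\hat S_{\ba}=\sum_i(\bx^{(i)})^{\ba}W_i$. Reading off the $(\ba,\bb)$-block one checks directly that
\[
M_{k+1}(\hat\mS)=\sum_{i,j}\big(u_{k+1}(\bx^{(i)})\otimes\bw_{ij}\big)\big(u_{k+1}(\bx^{(i)})\otimes\bw_{ij}\big)^{\intercal},
\]
so $M_{k+1}(\hat\mS)\succeq0$ and $\rank(M_{k+1}(\hat\mS))\le\sum_i\rank(W_i)=\rank(M_k(\mS))$. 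Since $M_k(\mS)$ is a principal submatrix of $M_{k+1}(\hat\mS)$, the reverse inequality is automatic, forcing equality; taking $\tilde{\mS}$ to be the truncation of $\hat\mS$ to $\N^n_{2k+2}$ yields (ii). This part is routine.

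For (ii) $\Rightarrow$ (i) the core is a propagation argument followed by a spectral construction of the atoms. First I would apply Smul'jan's lemma for positive semidefinite block matrices to show that the rank-preserving PSD extension $M_{k+1}(\tilde{\mS})$ propagates: the flatness relations $C=B^{\intercal}A^{+}B$ determine a unique candidate for $M_{k+2}$, and one must verify that the newly defined blocks respect the block-Hankel symmetry of a moment matrix, so that the procedure can be iterated to produce a full PSD moment matrix $M(\hat\mS)$ of finite rank $\rho=\rank(M_k(\mS))$. I would then regard $M(\hat\mS)$ as a positive semidefinite bilinear form on vector polynomials and pass to the quotient $H\coloneqq\RR[\bx]^m/\ker$, a real inner-product space of dimension $\rho$. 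On $H$ the maps $M_{x_j}\colon[p]\mapsto[x_jp]$ are well defined because flatness forces $\ker$ to be a submodule stable under multiplication by each $x_j$; they are self-adjoint by the Hankel symmetry $\langle x_jp,q\rangle=\langle p,x_jq\rangle$ and pairwise commuting. A commuting family of self-adjoint operators on a finite-dimensional real inner-product space is simultaneously orthogonally diagonalizable, and the joint eigenvalues $\bx^{(1)},\dots,\bx^{(r)}\in\RR^n$ furnish the support points, with orthogonal spectral projections $E_1,\dots,E_r$ decomposing $H$. Writing $[e_t]$ for the class of the constant vector polynomial $\bx^{0}e_t$, I would set $(W_i)_{st}\coloneqq\langle E_i[e_s],E_i[e_t]\rangle$, which is PSD as a Gram matrix; using $M_{\bx}^{\ba}E_i=(\bx^{(i)})^{\ba}E_i$ together with the orthogonality of the ranges of the $E_i$, a short computation gives $\hat S_{\ba}=\sum_i(\bx^{(i)})^{\ba}W_i$ for all $\ba$, so $\Phi=\sum_iW_i\delta_{\bx^{(i)}}$ represents $\mS$. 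Finally $\sum_i\rank(W_i)=\dim H=\rho$ because the classes $E_i[e_t]$ span $H$ and lie in mutually orthogonal subspaces, giving exactly the rank condition in (i).

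The hard part will be the propagation step and the accompanying commutativity check. In the matrix setting the kernel of the truncated moment matrix is a submodule of $\RR[\bx]^m$ rather than a real radical ideal, so one must verify block by block—using the symmetry and Hankel structure—that the flat extension continues consistently and that the induced multiplication operators $M_{x_j}$ genuinely commute. This is precisely where the block structure of $M_k(\mS)$ complicates the classical Curto–Fialkow argument and where the bulk of the technical work lies.
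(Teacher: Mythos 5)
This theorem is imported: the paper states it as a quotation of \cite[Theorem 6.2]{KT2022} and gives no proof of its own, so there is no in-paper argument to compare yours against; I can only assess your proposal on its merits. Your direction (i)\,$\Rightarrow$\,(ii) is correct and complete: extending by $\hat S_{\ba}=\sum_i(\bx^{(i)})^{\ba}W_i$ gives $M_{k+1}(\hat\mS)$ as a sum of $\sum_i\rank(W_i)$ rank-one PSD terms, and the principal-submatrix inequality closes the rank sandwich. The back half of (ii)\,$\Rightarrow$\,(i) is also sound: once a full PSD flat moment matrix $M(\hat\mS)$ of rank $\rho$ exists, the kernel is a multiplication-stable submodule (by the Cauchy--Schwarz argument on the Hankel form), the quotient carries commuting self-adjoint multiplication operators, and your Gram-matrix definition $(W_i)_{st}=\langle E_i[e_s],E_i[e_t]\rangle$, the verification $\hat S_{\ba}=\sum_i(\bx^{(i)})^{\ba}W_i$ via $M_{\bx}^{\ba}E_i=(\bx^{(i)})^{\ba}E_i$ and orthogonality of the $E_i$, and the rank count $\sum_i\rank(W_i)=\dim H=\rho$ all check out. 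One small correction: in this route the commutativity of the $M_{x_j}$ is automatic, since they are genuine multiplication operators on a quotient of $\RR[\bx]^m$; commutativity is only a nontrivial check if one works directly at the truncated level \`a la Laurent--Mourrain instead of building the full extension first.

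The genuine gap is the propagation step, which you correctly identify as the hard part but do not carry out --- and it is not a technicality, it is the entire content of the flat extension theorem. Your phrasing that ``the flatness relations $C=B^{\intercal}A^{+}B$ determine a unique candidate for $M_{k+2}$'' is misleading: in the block decomposition $M_{k+2}=\bigl(\begin{smallmatrix}A&B\\B^{\intercal}&C\end{smallmatrix}\bigr)$ with $A=M_{k+1}(\tilde\mS)$, the off-diagonal block $B$ already contains undetermined new moments $\tilde S_{\ba}$ with $2k+2<\vert\ba\vert\le 2k+3$, so Smul'jan's lemma alone determines nothing. What one must actually do is use the flatness of $M_{k+1}$ over $M_k$ to express each degree-$(k+1)$ column as a combination of lower-degree columns, multiply these relations by monomials to \emph{define} the new moments, and then prove (a) well-definedness, i.e.\ that different relations yielding the same moment give the same value, and (b) that the resulting matrix retains the block-Hankel (moment) structure, PSDness, and the same rank, so the process can be iterated. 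In the matrix-valued setting this consistency check must be done entry-wise through the $m\times m$ blocks and is precisely where \cite{KT2022} spends its effort. Until that is supplied, your write-up is a correct strategy outline with the theorem's core left as an assertion.
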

When $m=1$, Theorem \ref{th::FE} recovers the celebrated flat extension theorem 
of Curto and Fialkow \cite{CF1996}. 
There is also a version of the result by Curto and Fialkow \cite{CF2000}, which 
characterizes a truncated real sequence having a representing measure supported on 
a prescribed semi-algebraic subset of $\RR^n$. 
We next extend Theorem \ref{th::FE} to matrix-valued measures supported on $\X$, which provides a solution to the truncated matrix-valued $\X$-moment problem.
\begin{theorem}\label{th::FEC}
Given a truncated sequence $\mS=(S_{\ba})_{\ba\in\N^n_{2k}}\subseteq\bS^{m}$, 
the following statements are equivalent:
\begin{enumerate}
\item[(i)] $\mS$ admits an atomic representing measure $\Phi=\sum_{i=1}^r W_i\delta_{\bx^{(i)}}$
with $W_i\in\bS_+^{m}$, $\bx^{(i)}\in\X$ and $\sum_{i=1}^r\rank(W_i)=\rank(M_k(\mS))$;
\item[(ii)] $M_k(\mS)\succeq 0$ and $\mS$ admits an extension $\tilde{\mS}=(\tilde{S}_{\ba})_{\ba\in\N^n_{2(k+d_G)}}$
such that $M_{k+d_G}(\tilde{\mS})\succeq 0$, $M_k(G\tilde{\mS})\succeq 0$ and $\rank(M_k(\mS))=\rank(M_{k+d_G}(\tilde{\mS}))$.
\end{enumerate}
\end{theorem}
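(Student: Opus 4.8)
The plan is to establish the two implications separately, reducing the support condition $\bx^{(i)}\in\X$ to the $\RR^n$ flat extension theorem (Theorem \ref{th::FE}) and then exploiting the localizing matrix to force the atoms into $\X$.

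For (i)$\Rightarrow$(ii), I would set $\tilde S_\ba:=\int_\X\bx^\ba\,\ud\Phi(\bx)=\sum_{i=1}^r(\bx^{(i)})^\ba W_i$ for all $|\ba|\le 2(k+d_G)$. The PSDness of $M_{k+d_G}(\tilde{\mS})$ and of $M_k(G\tilde{\mS})$ then follows by the same computation as in Corollary \ref{cor::nece} (via Proposition \ref{prop::Lrepre}), using $W_i\succeq 0$ together with $G(\bx^{(i)})\succeq 0$ on the support. For the rank equality I would use the factorization
\[
M_{k+d_G}(\tilde{\mS})=\sum_{i=1}^r\big(u_{k+d_G}(\bx^{(i)})u_{k+d_G}(\bx^{(i)})^{\intercal}\big)\otimes W_i,
\]
whose rank is at most $\sum_i\rank(W_i)=\rank(M_k(\mS))$; since $M_k(\mS)$ is a principal submatrix of $M_{k+d_G}(\tilde{\mS})$, the reverse inequality is automatic and equality holds.

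For (ii)$\Rightarrow$(i) I first produce an atomic measure on $\RR^n$. Because $\rank(M_k(\mS))=\rank(M_{k+d_G}(\tilde{\mS}))$ and ranks of nested principal submatrices are monotone, all intermediate ranks coincide, so $M_{k+d_G}(\tilde{\mS})$ is a flat (rank-preserving) extension of $M_{k+d_G-1}(\tilde{\mS})$. Theorem \ref{th::FE} then yields an atomic measure $\Phi=\sum_{i=1}^r W_i\delta_{\bx^{(i)}}$ with $W_i\in\bS_+^m$, $\bx^{(i)}\in\RR^n$ and $\sum_i\rank(W_i)=\rank(M_k(\mS))$, representing $\tilde{\mS}$ up to degree $2(k+d_G)$; in particular $M_k(G\tilde{\mS})$ is exactly the order-$k$ localizing matrix of $\Phi$.

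The hard part will be showing that every atom lies in $\X$, i.e.\ $G(\bx^{(i)})\succeq 0$. Writing $\hat\xi_i:=(u_k(\bx^{(i)})^{\intercal}\otimes I_{mq})\xi$, a direct expansion gives
\[
\xi^{\intercal}M_k(G\tilde{\mS})\,\xi=\sum_{i=1}^r\hat\xi_i^{\intercal}\big(W_i\otimes G(\bx^{(i)})\big)\hat\xi_i\ \ge\ 0\qquad\text{for all }\xi.
\]
The obstacle is that, unlike the scalar case, the vectors $\{u_k(\bx^{(i)})\}_i$ need \emph{not} be linearly independent in the matrix setting (e.g.\ when the $W_i$ are low-rank with disjoint ranges), so one cannot isolate a single atom by a scalar interpolation polynomial. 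The resolution is to use the rank condition itself: $\sum_i\rank(W_i)=\rank(M_k(\mS))$ is equivalent to the linear independence of $\{u_k(\bx^{(i)})\otimes w:w\in\col(W_i)\}$, which in turn is equivalent to the surjectivity of $\xi\mapsto(\Pi_i\hat\xi_i)_i$ onto $\bigoplus_i\big(\col(W_i)\otimes\RR^q\big)$, where $\Pi_i$ is the orthogonal projection of $\RR^{mq}$ onto $\col(W_i)\otimes\RR^q$. Thus, if some $\eta^{\intercal}G(\bx^{(i_0)})\eta<0$, I would pick $w_0\in\col(W_{i_0})$ with $w_0^{\intercal}W_{i_0}w_0>0$ and, by surjectivity, choose $\xi$ with $\Pi_{i_0}\hat\xi_{i_0}=w_0\otimes\eta$ and $\Pi_i\hat\xi_i=0$ for $i\ne i_0$. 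Since $W_i\otimes G(\bx^{(i)})$ annihilates $(\col(W_i)\otimes\RR^q)^{\perp}$, only the components $\Pi_i\hat\xi_i$ contribute and the sum collapses to $(w_0^{\intercal}W_{i_0}w_0)(\eta^{\intercal}G(\bx^{(i_0)})\eta)<0$, contradicting the displayed inequality. Hence $G(\bx^{(i)})\succeq 0$ for every $i$, so $\bx^{(i)}\in\X$ and $\Phi$ is the required atomic representing measure supported on $\X$.
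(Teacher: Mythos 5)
Your proposal is correct, and while the forward implication matches the paper's (which simply delegates to Corollary~\ref{cor::nece} and Theorem~\ref{th::FE}), your argument for (ii)$\Rightarrow$(i) takes a genuinely different route. The paper shows $G(\bx^{(i)})\succeq 0$ by working in the quotient space $\RR[\bx]^m/\mathcal{I}_{\mS}$: it builds a degree-$\le k$ basis of this quotient from linearly independent columns of the (infinitely flatly extended) moment matrix, takes the Lagrange interpolation vector $H_i=p^{(i)}\bv^{(i)}$ (whose degree may exceed $k$), reduces it modulo $\mathcal{I}_{\mS}$ to a representative $H_i^{(0)}$ of degree $\le k$, and substitutes its coefficients (tensored with $I_q$) into the localizing form to isolate $\bigl((\bv^{(i)})^{\intercal}W_i\bv^{(i)}\bigr)G(\bx^{(i)})\succeq 0$. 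You instead observe that the rank condition $\sum_i\rank(W_i)=\rank(M_k(\mS))$ is exactly the linear independence of $\{u_k(\bx^{(i)})\otimes w: w\in\col(W_i)\}$, hence (by the adjoint) the surjectivity of the block evaluation map onto $\bigoplus_i\bigl(\col(W_i)\otimes\RR^q\bigr)$, which lets you choose a test vector concentrating the localizing quadratic form on a single atom; the fact that $W_i\otimes G(\bx^{(i)})$ annihilates $\bigl(\col(W_i)\otimes\RR^q\bigr)^{\perp}$ kills the other atoms. Both proofs exploit the same underlying mechanism---the rank condition permits atom separation using only degree-$\le k$ data---but yours is more directly linear-algebraic and dispenses with the interpolation polynomials and the quotient-space reduction, which is arguably cleaner. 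Two small points you should make explicit: (a) atoms with $W_{i_0}=0$ can be discarded before choosing $w_0$; and (b) the claim that the measure from Theorem~\ref{th::FE} represents $\tilde{\mS}$ through degree $2(k+d_G)$ (so that $M_k(G\tilde{\mS})$ really is the localizing matrix of $\Phi$) rests on the uniqueness of successive flat PSD extensions---the paper handles this by passing to the infinite flat sequence $\hat{\mS}$, and you should cite the same fact rather than assert it.
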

\begin{proof}
(i)$\Rightarrow$(ii). It is implied by Corollary \ref{cor::nece} and Theorem \ref{th::FE}.

(ii)$\Rightarrow$(i). By Theorem \ref{th::FE}, $\mS$ admits an atomic representing
measure $\Phi=\sum_{i=1}^r W_i\delta_{\bx^{(i)}}$ with
$W_i\in\bS_+^{m}$, $\bx^{(i)}\in\RR^n$ and $\sum_{i=1}^r\rank(W_i)=\rank(M_k(\mS))$.
We need to prove $\bx^{(i)}\in\X$ for $i=1,\ldots,r$.
By Theorem \ref{th::FE}, we can extend $\tilde{\mS}$ to an infinite sequence 
$\hat{\mS}=(\hat{S}_{\ba})_{\ba\in\N^n}$ such that 
$M(\hat{\mS})\succeq 0$ and $\rank(M(\hat{\mS}))=\rank(M_k(\mS))$. For simplicity, in the following we will still use the symbol $\mS$ to denote $\hat{\mS}$.

For a column vector of polynomials $H(\bx)\in\RR[\bx]^m$, we write 
$H(\bx)=\sum_{\bg\in\supp{H}} H_{\bg}\bx^{\bg}$ with $H_{\bg}\in\RR^m$.
We define a subspace $\mathcal{I}_{\mS}$ of $\RR[\bx]^m$ associated with $\mS$ by
\[\mathcal{I}_{\mS}\coloneqq\left\{H(\bx)\in\RR[\bx]^m\ \middle\vert \ \sum_{\bg\in\supp{H}}S_{\ba+\bg}H_{\bg}=0,\ \forall\ba\in\N^n\right\}.\]
For any $H(\bx)\in\mathcal{I}_{\mS}$, we have $H(\bx^{(i)})^{\intercal}W_iH(\bx^{(i)})=0$ 
for all $i=1,\ldots,r$. In fact, as $H(\bx)\in\mathcal{I}_{\mS}$, it holds that
\[
0=\sum_{\ba\in \supp{H}}\sum_{\bb\in\supp{H}}
H_{\ba}^{\intercal}S_{\ba+\bb}H_{\bb}=\sum_{i=1}^rH(\bx^{(i)})^{\intercal}W_iH(\bx^{(i)}).
\]
As $W_i$'s are PSD, 
it implies that 
\begin{equation}\label{eq::QH}
H(\bx^{(i)})^{\intercal}W_iH(\bx^{(i)})=0\quad\text{and}\quad W_iH(\bx^{(i)})=0.
\end{equation}

Consider the quotient space $\RR[\bx]^m/\mathcal{I}_{\mS}\coloneqq\{H+\mathcal{I}_{\mS} \mid H\in\RR[\bx]^m\}$ over $\RR$
consisting of equivalence classes modulo $\mathcal{I}_{\mS}$. Let $t\coloneqq\rank(M(\mS))=\rank(M_k(\mS))$. 
Let $\bb^{(1)},\ldots,\bb^{(t)}\in\N^n_k$ (not necessarily distinct) and standard basis (column) vectors $\be^{(1)},\ldots, \be^{(t)}$ (not necessarily distinct) of $\RR^m$ be such that
\begin{equation}\label{eq::liset}
\left\{\col\left((S_{\ba+\bb^{(1)}})_{\ba\in\N^n}\right)
\be^{(1)}, \ldots, \col\left((S_{\ba+\bb^{(t)}})_{\ba\in\N^n}\right)
\be^{(t)} \right\}
\end{equation}
is a set of $t$ linearly independent column vectors of $M(\mS)$ and hence forms a basis of the column space of $M(\mS)$. 
Here, $\col\left((S_{\ba+\bb^{(i)}})_{\ba\in\N^n}\right)$ denotes the block-column vector with 
infinite $m\times m$ block entries $(S_{\ba+\bb^{(i)}})_{\ba\in\N^n}$. We claim that the set
\begin{equation}\label{eq::basis}
\left\{\bx^{\bb^{(1)}}\be^{(1)}+\mathcal{I}_{\mS},\ \ldots,\ 
\bx^{\bb^{(t)}}\be^{(t)}+\mathcal{I}_{\mS}\right\}
\end{equation}
forms a basis of $\RR[\bx]^m/\mathcal{I}_{\mS}$. To see this, first note that the elements in \eqref{eq::basis} are linearly independent as the elements in \eqref{eq::liset} are linearly independent. Then, it is sufficient to prove that for arbitrary $\bg\in\N^n$ and $j\in\N$ with $1\le j\le t$, the element $\bx^{\bg}\be^{(j)}+\mathcal{I}_{\mS}$ can be written as a linear combination of elements in \eqref{eq::basis}. This is indeed true since the column vector $\col\left((S_{\ba+\bg)_{\ba\in\N^n}}\right)\be^{(j)}$ can be written as a linear combination of elements in \eqref{eq::liset}. For any $H(\bx)\in\RR[\bx]^m$, we write $H(\bx)=H^{(0)}(\bx)+H^{(1)}(\bx)$ where $H^{(0)}$ is the residue of $H$ modulo $\mathcal{I}_{\mS}$ w.r.t. the basis \eqref{eq::basis} and $H^{(1)}\in \mathcal{I}_{\mS}$. Since $\bb^{(1)},\ldots,\bb^{(t)}\in\N^n_k$, we see that $\deg(H^{(0)})\le k$. 

Let $\{p^{(i)}(\bx)\}_{i=1}^r$ be the Lagrange interpolation polynomials at the points $\{\bx^{(i)}\}_{i=1}^r$ such that $p^{(i)}(\bx^{(i)})=1$ and $p^{(i)}(\bx^{(j)})=0$ for all $j\neq i$. Now we fix an $i$ and prove $\bx^{(i)}\in\X$. As $W_i\succeq 0$, there exists a vector $\bv^{(i)}\in\RR^m$ such that $(\bv^{(i)})^\intercal W_i\bv^{(i)}>0$. Let $H_i(\bx)=p^{(i)}(\bx) \bv^{(i)}\in\RR[\bx]^m$. 
Let us write $H_i=H^{(0)}_i+H_i^{(1)}$ and $H^{(0)}_i(\bx)=\sum_{\ba\in\supp{H^{(0)}_i}}H^{(0)}_{i,\ba}\bx^{\ba}$ with $\supp{H^{(0)}_i}\subseteq\N^n_k$. As $M_k(G\mS)\succeq 0$, we have 
\[\sum_{\ba\in\supp{H_i^{(0)}}}\sum_{\bb\in\supp{H_i^{(0)}}}
\left((H_{i,\ba}^{(0)})^{\intercal}\otimes I_q\right) [M_k(G\mS)]_{\ba\bb}
\left(H^{(0)}_{i,\bb}\otimes I_q\right)
\succeq 0.\]
By the definition of $M_k(G\mS)$, we have
\[
\begin{aligned}
&\sum_{\ba\in\supp{H_i^{(0)}}}\sum_{\bb\in\supp{H_i^{(0)}}}
\left((H_{i,\ba}^{(0)})^{\intercal}\otimes I_q\right) [M_k(G\mS)]_{\ba\bb}
\left(H^{(0)}_{i,\bb}\otimes I_q\right)\\
=&\sum_{\ba\in\supp{H_i^{(0)}}}\sum_{\bb\in\supp{H_i^{(0)}}}
\left((H_{i,\ba}^{(0)})^{\intercal}\otimes I_q\right) \left(
\sum_{\bg\in\supp{G}}S_{\ba+\bb+\bg}\otimes G_{\bg}\right) \left(H^{(0)}_{i,\bb}\otimes I_q\right)\\
=&\sum_{\bg\in\supp{G}}\left(\sum_{\ba\in\supp{H_i^{(0)}}}
\sum_{\bb\in\supp{H_i^{(0)}}}(H^{(0)}_{i,\ba})^{\intercal}
S_{\ba+\bb+\bg}H^{(0)}_{i,\bb} \right) G_{\bg}\\
=&\sum_{j=1}^r \left(H_i^{(0)}(\bx^{(j)})^{\intercal}W_jH_i^{(0)}(\bx^{(j)})\right)
G(\bx^{(j)})\\
=&\sum_{j=1}^r \left(\left(H_i(\bx^{(j)})-H_i^{(1)}(\bx^{(j)})\right)^{\intercal}W_j
\left(H_i(\bx^{(j)})-H_i^{(1)}(\bx^{(j)})\right)\right)
G(\bx^{(j)})\\
=&\sum_{j=1}^r \left(\left(p^{(i)}(\bx^{(j)}) \bv^{(i)}-H_i^{(1)}(\bx^{(j)})\right)^{\intercal}W_j
\left(p^{(i)}(\bx^{(j)}) \bv^{(i)}-H_i^{(1)}(\bx^{(j)})\right)\right)
G(\bx^{(j)})\\
=&\left(\bv^{(i)}-H_i^{(1)}(\bx^{(i)})\right)^{\intercal}W_i\left(\bv^{(i)}-H_i^{(1)}(\bx^{(i)})\right)G(\bx^{(i)})+\sum_{j\neq i}\left(H_i^{(1)}(\bx^{(j)})\right)^{\intercal}W_jH_i^{(1)}(\bx^{(j)})G(\bx^{(j)})\\
=&\left((\bv^{(i)})^\intercal W_i{\bv^{(i)}}\right)G(\bx^{(i)}),
\end{aligned}
\]
where the second-to-last equality is due to the fact that 
$p^{(i)}(\bx^{(i)})=1$ and $p^{(i)}(\bx^{(j)})=0$ for all $j\neq i$,
and the last equality is due to \eqref{eq::QH} and $H^{(1)}_i\in\mathcal{I}$. 
As $(\bv^{(i)})^\intercal W_i\bv^{(i)}>0$, we have $G(\bx^{(i)})\succeq 0$, which implies $\bx^{(i)}\in\X$ as desired.
\end{proof}
\begin{remark}\label{rk::FEC}
The \emph{block rank} $\rank_{bl}(\mS)$ of $M_k(\mS)$ is defined as the maximal number of linearly independent block columns (with $M_k(\mS)$ being regarded as a block matrix of entries $S_{\ba+\bb}$). If the FEC $\rank(M_k(\mS))=\rank(M_{k+d_G}(\tilde{\mS}))$ in Theorem \ref{th::FEC} is replaced by $\rank_{bl}(M_k(\mS))=\rank_{bl}(M_{k+d_G}(\tilde{\mS}))$, 
Nie \cite[Theorem 10.3.5]{NieBook} proved that $\mS$ admits an atomic representing measure.
We point out that the FEC on the block rank is stronger than that on the usual rank, and so our Theorem \ref{th::FEC} is more general.
In fact, it is easy to see that $\rank(M_k(\mS))=\rank(M_{k+d_G}(\tilde{\mS}))$ holds if 
$\rank_{bl}(M_k(\mS))=\rank_{bl}(M_{k+d_G}(\tilde{\mS}))$. However, the converse is not necessarily true. For example, letting $m=2$, $n=2$, $k=0$ and $d_G=1$, consider the sequence $\mS=(I_2)$ and its extension 
$\tilde{\mS}=(I_2, J, J, I_2, I_2, I_2)$ where
\[
I_2=\left[\begin{array}{cc}
1 &0\\
0 & 1
\end{array}\right]\quad\text{and}\quad
J=\left[\begin{array}{cc}
0 &1\\
1 & 0
\end{array}\right].
\]
We have $\rank(M_0(\mS))=\rank(M_1(\tilde{\mS}))=2$, but
$1=\rank_{bl}(M_0(\mS))\neq \rank_{bl}(M_1(\tilde{\mS}))=2$.
\end{remark}

\subsection{Matrix-valued measure recovery}\label{sec::recover}
For a truncated sequence $\mS=(S_{\ba})_{\ba\in\N^n_{2k}}\subseteq\bS^{m}$
with $k\ge d_G$, suppose that $M_k(\mS)\succeq 0$, $M_{k-d_G}(G\mS)\succeq 0$ and 
$\rank(M_k(\mS))=\rank(M_{k-d_G}(\mS))$. By Theorem \ref{th::FEC}, 
$\mS$ admits a finitely atomic representing measure $\Phi=\sum_{i=1}^r W_i\delta_{\bx^{(i)}}$
with $W_i\in\bS_+^{m}$, $\bx^{(i)}\in\X$ 
and $\sum_{i=1}^r\rank(W_i)=\rank(M_k(\mS))$. 
In theory, it was shown in \cite{KT2022} that the points $\{\bx^{(i)}\}_i$ can be computed via the intersecting zeros of the determinants of matrix-valued polynomials describing the flat extension.
In this subsection, inspired by \cite{LasserreHenrion}, we provide a linear algebra procedure for extracting $\bx^{(i)}\in\X$ and $W_i\in\bS^{m}$, which is much cheaper to implement. 

From the definition of $M_k(\mS)$, it holds
\[
M_k(\mS)=\sum_{i=1}^r\left(u_k(\bx^{(i)})\otimes I_m\right)W_i\left(u_k(\bx^{(i)})\otimes I_m\right)^\intercal,
\]
where $u_k(\bx^{(i)})$ is defined in \eqref{eq::ud}.
Letting $m_i=\rank(W_i)$, as $W_i$ is PSD, we have the decomposition 
$W_i=\sum_{j=1}^{m_i}\bw^{(i,j)}(\bw^{(i,j)})^{\intercal}$
for some $\bw^{(i,j)}\in\RR^m$. Then, we can write $M_k(\mS)=VV^\intercal$ with
\[
V=\left[\left(u_k(\bx^{(1)})\otimes I_m\right)[\bw^{(1,1)},\ldots,\bw^{(1,m_1)}],\ldots,
\left(u_k(\bx^{(r)})\otimes 
I_m\right)[\bw^{(r,1)},\ldots,\bw^{(r,m_r)}]\right].
\]

Let $M_k(\mS)=\widetilde{V}\widetilde{V}^\intercal$ be a Cholesky decomposition of $M_k(\mS)$ with $\widetilde{V}\in\RR^{m\snd{k}\times t}$ and $t=\rank(M_k(\mS))$. Notice that $V$ and $\widetilde{V}$ 
span the same column space. 
We will recover $\bx^{(i)}$ by suitable column operations on $\widetilde{V}$. 

Note that each column of $V$ is of form 
$u_k(\bx^{(i)})\otimes \bw^{(i,j)}$ and can be generated by
the columns of $\widetilde{V}$.
Now we treat the entries in the vectors $\bw^{(i,j)}$ as variables 
and denote it by  $\bw=(w_1,\ldots,w_m)$. Then,
the rows in $V$ correspond to the monomials
\[
v_k(\bx, \bw)=[\bw,\ x_1\bw,\ x_2\bw,\ \cdots,\ x_n\bw,\ x_1^2\bw,\ x_1x_2\bw,\ \cdots, \
x_n^k\bw]^{\intercal}.
\]

Reduce the matrix $\widetilde{V}$ to the column echelon form $U$:
\[
U=\left[
\begin{array}{ccccc}
   1   &   &   &   &  \\
   \star &   &&&\\
   0 & 1 & &&\\
   0 & 0 & 1& &\\
   \star & \star&\star & &\\
   &\vdots & & \ddots & \\
   0 & 0 & 0 &\cdots & 1\\
   \star & \star&\star &\cdots  &\star\\
   & \vdots &&& \vdots\\
   \star & \star&\star &\cdots  &\star\\
\end{array}
\right].
\]
From the rows of $U$ where the pivot elements locate, we obtain a (column) 
monomial basis $b_k(\bx,\bw)$ which consists of $t$ monomials in 
$v_k(\bx, \bw)$ such that
\begin{equation}\label{eq::vUb}
v_k(\bx, \bw)=U b_k(\bx, \bw)
\end{equation}
holds at each pair $(\bx^{(i)}, \bw^{(i,j)})$, 
$j=1,\ldots,m_i$, $i=1,\ldots,r$. Note that each monomial $\bx^{\ba}w_j$
in $b_k(\bx, \bw)$ satisfies $\vert\ba\vert\le k-d_G$ since
$\rank(M_k(\mS))=\rank(M_{k-d_G}(\mS))$.
\begin{proposition}\label{prop::LD}
   The vectors $b_k(\bx^{(i)}, \bw^{(i,j)})$, $j=1,\ldots,m_i$, $i=1,\ldots,r$, 
   are linearly independent.
\end{proposition}
\begin{proof}
Case 1: $r-1\le k$.
Suppose on the contrary that $b_k(\bx^{(i)}, \bw^{(i,j)})$, $j=1,\ldots,m_i$, $i=1,\ldots,r$ are linearly dependent. Then there exist constants $c_{i,j}$'s, not all zeros, such that
\[\sum_{i=1}^r\sum_{j=1}^{m_i} c_{i,j} b_k(\bx^{(i)}, \bw^{(i,j)})=0.\]
Because the points $\bx^{(i)}$'s are distinct, we can construct the Lagrange interpolation polynomials $p^{(i)}(\bx)$'s at $\bx^{(i)}$'s such that $p^{(i)}(\bx^{(i)})=1$ and 
$p^{(i)}(\bx^{(j)})=0$ for all $i\neq j$. 
Now we fix an $i'$ with $1\le i'\le r$, and consider the column vector of polynomials $\bw p^{(i')}(\bx)\in\RR[\bx, \bw]^m$.
As $\deg(p^{(i)})=r-1\le k$, due to \eqref{eq::vUb}, there exists a coefficient matrix
$\Xi\in\RR^{m\times t}$ such that $\bw p^{(i')}(\bx)=\Xi b_k(\bx,\bw)$
holds at each pair $(\bx^{(i)}, \bw^{(i,j)})$, $j=1,\ldots,m_i$, $i=1,\ldots,r$. Then, we have
\[
\begin{aligned}
0=\Xi\left(\sum_{i=1}^r\sum_{j=1}^{m_i} c_{i,j} b_k(\bx^{(i)}, \bw^{(i,j)})\right)
=\sum_{i=1}^r\sum_{j=1}^{m_i} c_{i,j} \bw^{(i,j)}p^{(i')}(\bx^{(i)}) 
=\sum_{j=1}^{m_{i'}}c_{i',j}\bw^{(i',j)}.
\end{aligned}
\]
As $\bw^{(i',j)}$'s are linearly independent, we have $c_{i',j}=0$ for all $j=1,\ldots,m_i$.
This leads to a contradiction, since $i'$ can be arbitrarily chosen.

Case 2: $r-1 > k$. According to Theorem \ref{th::FE},
$\mS$ admits a flat extension $\tilde{\mS}=(\tilde{S}_{\ba})_{\ba\in\N^n_{2r-2}}$
such that $M_{r-1}(\tilde{\mS})\succeq 0$ and $\rank(M_k(\mS))=\rank(M_{r-1}(\tilde{\mS}))$.
By repeating the previous arguments on $M_{r-1}(\tilde{\mS})$, we can still obtain
a column echelon form $\tilde{U}$ and a monomial basis $b_{r-1}(\bx,\bw)$ such that
$v_{r-1}(\bx, \bw)=\tilde{U} b_{r-1}(\bx, \bw)$ holds at all pairs $(\bx^{(i)}, \bw^{(i,j)})$'s. 
Since $M_{r-1}(\tilde{\mS})$ is a flat extension of $M_k(\mS)$, it is easy to see that
the basis $b_{r-1}(\bx,\bw)$ is identical to $b_k(\bx,\bw)$.
Now as in Case 1, we can show that $b_{r-1}(\bx^{(i)}, \bw^{(i,j)})$ 
and hence $b_k(\bx^{(i)}, \bw^{(i,j)})$, $j=1,\ldots,m_i$, $i=1,\ldots,r$,
are linearly independent. 
\end{proof}

Recall that each monomial $\bx^{\ba}w_j$ in $b_k(\bx, \bw)$ satisfies
$\vert\ba\vert\le k-d_G<k$. Hence, for each $l=1, \ldots, n$, we can
extract from $U$ the $t\times t$ multiplication matrix $N_l$ such 
that $N_l b_k(\bx, \bw)=x_l b_k(\bx, \bw)$ holds at each pair $(\bx^{(i)}, \bw^{(i,j)})$, 
$j=1,\ldots,m_i$, $i=1,\ldots,r$.

Following \cite{CGT1997}, we build a random combination of multiplication matrices
$N=\sum_{l=1}^n c_lN_l$, where $c_l>0$ and $\sum_{l=1}^n c_l=1$. Let $N=ATA^\intercal$ be the ordered Schur decomposition
of $N$, where $A=[a_1,\ldots,a_t]$ is an orthogonal matrix with $A^\intercal A=I_t$ and $T$ is upper-triangular with eigenvalues of $N$ being sorted increasingly along the diagonal.
\begin{proposition}
Suppose that the constants $c_l$'s are chosen such that 
$h(\bx)=\sum_{l=1}^n c_lx_l$ takes distinct values on $\bx^{(i)}$, $i=1,\ldots,r$. Then the set of points
\begin{equation}\label{eq::lpoints}
\bigl\{(a_1^\intercal N_1 a_1, \ldots, a_1^\intercal N_n a_1),\ \ldots,\
(a_t^\intercal N_1 a_t, \ldots, a_t^\intercal N_n a_t)\bigr\}
\end{equation}
is exactly $\{\bx^{(1)},\ldots,\bx^{(r)}\}$ and each $\bx^{(i)}$ appears
$m_i=\rank(W_i)$ times.
\end{proposition}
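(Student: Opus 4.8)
The plan is to exploit the fact that the vectors $b_k(\bx^{(i)},\bw^{(i,j)})$ are simultaneous eigenvectors of all the multiplication matrices $N_l$. By construction $N_l\,b_k(\bx^{(i)},\bw^{(i,j)})=x^{(i)}_l\,b_k(\bx^{(i)},\bw^{(i,j)})$, where $x^{(i)}_l$ is the $l$-th coordinate of $\bx^{(i)}$. First I would collect these $t$ vectors, which are linearly independent by Proposition \ref{prop::LD} and hence form a basis of $\RR^t$, as the columns of a matrix $B$, so that $N_l B=B D_l$ with $D_l$ diagonal carrying the entries $x^{(i)}_l$, each repeated $m_i$ times. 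This shows at once that the $N_l$ pairwise commute and are simultaneously diagonalized by $B$. Setting $E_i\coloneqq\mathrm{span}\{b_k(\bx^{(i)},\bw^{(i,j)})\}_{j=1}^{m_i}$, the key consequence is that $N_l$ restricts to the scalar $x^{(i)}_l I$ on $E_i$, and therefore $N=\sum_{l=1}^n c_l N_l$ acts on $E_i$ as the scalar $h(\bx^{(i)})I$.

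Next I would analyze the spectrum of $N$. Its eigenvalues are exactly the values $h(\bx^{(i)})$, $i=1,\ldots,r$, with eigenspace $E_i$ of dimension $m_i$, so $N$ is diagonalizable and its eigenvalues are real. By the standing hypothesis that $h$ separates the points $\bx^{(i)}$, these $r$ eigenvalues are pairwise distinct, with multiplicities $m_1,\ldots,m_r$ summing to $t$. Since the eigenvalues are real, a real ordered Schur decomposition $N=ATA^\intercal$ with $A$ orthogonal and $T$ upper-triangular exists; I would relabel the atoms so that $h(\bx^{(1)})<\cdots<h(\bx^{(r)})$, matching the increasing order along the diagonal of $T$.

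The core step uses the structure of the Schur form. From $NA=AT$ with $T$ upper-triangular, each leading block $\mathrm{span}(a_1,\ldots,a_p)$ is $N$-invariant, and the restriction of $N$ to it has spectrum $\{T_{11},\ldots,T_{pp}\}$. Because $N$ is diagonalizable and the diagonal entries are sorted, the invariant subspace of dimension $M_i\coloneqq m_1+\cdots+m_i$ must be $E_1\oplus\cdots\oplus E_i$. Hence, writing block $i$ as the Schur vectors $a_{M_{i-1}+1},\ldots,a_{M_i}$ (with $M_0\coloneqq 0$), every such $a_j$ lies in $E_1\oplus\cdots\oplus E_i$ and, by orthonormality of the columns of $A$, is orthogonal to $E_1\oplus\cdots\oplus E_{i-1}$. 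Decomposing $a_j=\sum_{i'\le i}v_{i'}$ with $v_{i'}\in E_{i'}$, orthogonality forces $a_j^\intercal v_{i'}=0$ for $i'<i$ and hence $a_j^\intercal v_i=\Vert a_j\Vert^2=1$; using $N_l v_{i'}=x^{(i')}_l v_{i'}$ I would then compute $a_j^\intercal N_l a_j=\sum_{i'\le i}x^{(i')}_l\,a_j^\intercal v_{i'}=x^{(i)}_l$ for every $l$. This identifies the point of \eqref{eq::lpoints} built from $a_j$ with $\bx^{(i)}$, and since block $i$ contains exactly $m_i$ Schur vectors, $\bx^{(i)}$ is recovered with the correct multiplicity $m_i=\rank(W_i)$.

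I expect the main obstacle to be the bookkeeping around the multiplicities $m_i>1$: one must argue carefully that on each eigenspace $E_i$ the commuting $N_l$ all collapse to scalars, and that the ordered Schur basis respects the decomposition $E_1\oplus\cdots\oplus E_i$, so that the orthogonality computation yields precisely $x^{(i)}_l$ rather than a spurious average over several atoms. The separation hypothesis on $h$ is exactly what keeps these blocks disjoint and makes the ordering unambiguous; without it the eigenspaces of $N$ could merge and the extracted points would be blurred.
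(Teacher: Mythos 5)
Your proof is correct and follows essentially the same route as the paper's: the vectors $b_k(\bx^{(i)},\bw^{(i,j)})$ are common eigenvectors of the $N_l$, their linear independence (Proposition on linear independence of the $b_k$ evaluations) shows that $N$ is diagonalizable with eigenvalues $h(\bx^{(i)})$ of multiplicity $m_i$, and the ordered Schur basis is then matched to the atoms, giving $a^\intercal N_l a = x^{(i)}_l$. The one place where you genuinely diverge is the treatment of the Schur vectors: the paper asserts that the $i$-th block $\mathcal{A}_i$ of columns of $A$ \emph{spans the eigenspace} $E_i$ and then expands $a\in\mathcal{A}_i$ in the basis $\{b_k(\bx^{(i)},\bw^{(i,j)})\}_j$ of $E_i$; since $N$ is not normal in general, the Schur vectors of block $i$ need only span the orthogonal complement of $E_1\oplus\cdots\oplus E_{i-1}$ inside the invariant subspace $E_1\oplus\cdots\oplus E_i$, not $E_i$ itself. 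Your argument avoids this by decomposing $a_j=\sum_{i'\le i}v_{i'}$ with $v_{i'}\in E_{i'}$ and using orthogonality of $a_j$ to the earlier Schur vectors to kill the terms with $i'<i$, which still yields $a_j^\intercal N_l a_j=x^{(i)}_l\,a_j^\intercal v_i=x^{(i)}_l$. This is a more careful justification of the same computation, and it in fact closes a small gap in the paper's own write-up; the separation hypothesis on $h$ is used identically in both arguments to keep the eigenvalue blocks disjoint and correctly sized.
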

\begin{proof}
Since $N_l b_k(\bx, \bw)=x_l b_k(\bx, \bw)$ holds at each pair
$(\bx^{(i)}, \bw^{(i,j)})$, $j=1,\ldots,m_i$, $i=1,\ldots,r$.
It is easy to see that
\[h(\bx^{(i)})b_k(\bx^{(i)}, \bw^{(i,j)})
=\sum_{l=1}^n c_l x^{(i)}_l b_k(\bx^{(i)}, \bw^{(i,j)})
=N b_k(\bx^{(i)}, \bw^{(i,j)}),
\]
for each $j=1,\ldots,m_i$, $i=1,\ldots,r$. In other words, 
for each $i=1,\ldots,r$, $h(\bx^{(i)})$ is an eigenvalue of $N$ and $b_k(\bx^{(i)}, \bw^{(i,j)})$, $j=1,\ldots,m_i$ are the associated eigenvectors.
By Proposition \ref{prop::LD}, the $t$ vectors $b_k(\bx^{(i)}, \bw^{(i,j)})$, $j=1,\ldots,m_i$, $i=1,\ldots,r$ are linearly independent. Therefore, $\{h(\bx^{(1)}),\ldots,h(\bx^{(r)})\}$ is exactly the set of eigenvalues of $N$, and $\{b_k(\bx^{(i)}, \bw^{(i,j)}), j=1,\ldots,m_i\}$ 
spans the eigenspace of $N$ associated with $h(\bx^{(i)})$. 
So, we can divide the set $\{a_1,\ldots,a_t\}$ into $r$ groups $\mathcal{A}_1,\ldots,\mathcal{A}_r$ with $\lvert\mathcal{A}_i\rvert=m_i$, such that $\mathcal{A}_i$ spans the eigenspace of $N$ associated with $h(\bx^{(i)})$. Now fix an $i$ and 
a vector $a\in\mathcal{A}_i$. There exist weights $\lambda_1,\ldots,\lambda_{m_i}\in\RR$ such that 
$a=\sum_{j=1}^{m_i}\lambda_{j} b_k(\bx^{(i)}, \bw^{(i,j)}).$
Then, for each $l=1,\ldots,n$, it holds
\[
\begin{aligned}
a^\intercal N_l a&= 
\left(\sum_{j=1}^{m_i}\lambda_{j} b_k(\bx^{(i)}, \bw^{(i,j)})\right)^\intercal
N_l
\left(\sum_{j=1}^{m_i}\lambda_{j} b_k(\bx^{(i)}, \bw^{(i,j)})\right)\\
&=\left(\sum_{j=1}^{m_i}\lambda_{j} b_k(\bx^{(i)}, \bw^{(i,j)})\right)^\intercal
\left(\sum_{j=1}^{m_i}\lambda_{j} x^{(i)}_l b_k(\bx^{(i)}, \bw^{(i,j)})\right)\\
&=x^{(i)}_l a^\intercal a=x^{(i)}_l.
\end{aligned}
\]
Hence, $(a^\intercal N_1 a, \ldots, a^\intercal N_n a) = \bx^{(i)}$.
The conclusion then follows.
\end{proof}

Once the points $\bx^{(1)},\ldots,\bx^{(r)}$ are obtained, let
\[
\Lambda\coloneqq\left[u_k(\bx^{(1)}), \ldots, u_k(\bx^{(r)})\right]\otimes I_m\in\RR^{m\snd{k}\times mr},
\]
and we have  
\begin{equation}\label{eq::MS}
M_k(\mS)=\Lambda\diag(W_1,\ldots,W_r)\Lambda^\intercal.
\end{equation}
Notice that the first $m$ columns of $\diag(W_1,\ldots,W_r) \Lambda^\intercal$ are exactly $[W_1,\ldots,W_r]^\intercal$. By comparing the first $m$ columns of both sides of \eqref{eq::MS}, we get
\begin{equation}\label{eq::MS2}
\col(\{S_{\ba}\}_{\ba\in\N^n_k})=\Lambda [W_1,\ldots,W_r]^\intercal.
\end{equation}
Assume that $\Lambda$ has $mr$ independent rows (see Remark \ref{rk::indepen}) and let $\mathcal{R}$ be the index set of these rows. 
Denote by $\Lambda_{\mathcal{R}}$ (resp. $M_{\mathcal{R}}(\mS)$) the $mr\times mr$
(resp. $mr\times m$) submatrix of $\Lambda$ 
(resp. $\col(\{S_{\ba}\}_{\ba\in\N^n_k})$) 
whose rows are indexed by $\mathcal{R}$. Then by extracting the rows indexed $\mathcal{R}$ from both sides of \eqref{eq::MS2}, we have
$M_{\mathcal{R}}(\mS)=\Lambda_{\mathcal{R}} [W_1,\ldots,W_r]^\intercal.$
Hence, the matrices $W_i$'s can be retrieved by
$[W_1,\ldots,W_r]^\intercal=\Lambda_{\mathcal{R}}^{-1}  M_{\mathcal{R}}(\mS).$
We provide an example illustrating the above procedure in Appendix \ref{secA1}.
\begin{remark}\label{rk::indepen}
It is clear that if 
\begin{equation}\label{eq::rkuk}
\rank([u_k(\bx^{(1)}), \ldots, u_k(\bx^{(r)})])=r,
\end{equation}
then $\Lambda$ must have $mr$ independent rows. As $\bx^{(i)}$'s are distinct,
by considering the Lagrange interpolation polynomials at $\bx^{(i)}$'s, 
we know that \eqref{eq::rkuk} always holds if $k\ge r-1$. If $k< r-1$, then it is possible that \eqref{eq::rkuk} fails
and we may need to consider flat extensions of $M_k(\mS)$
to recover the weights $W_i$'s. 
Inspired by \cite{Nie2014}, 
we propose the following heuristic method for finding such a flat extension. Consider the linear matrix-valued moment optimization problem:
\[
\min_{\widetilde{\mS}}\ \sum_{\ba\in\N^n_{2d}}\tr{R_{\ba}\widetilde{S}_{\ba}}\  \text{  s.t. }\
\widetilde{\mS}=(\widetilde{S}_{\ba})_{\ba\in\N^n_{2d}}\in\ms_d^m(G),\ \widetilde{S}_{\ba}=S_{\ba},\ \forall 
\ba\in\N^n_{2k},
\]
for some $d>k$ and $(R_{\ba})_{\ba\in\N^n_{2d}}\subseteq \bS^{m}$. By randomly choosing the matrices $R_{\ba}$, one may expect that the optimum of the above problem is achieved at an extreme point of the cone $\ms_d^m(G)$ which might admit a finitely atomic matrix-valued representing measure and hence provide a flat extension of $M_k(\mS)$. We leave a detailed study on this issue in the future.

\end{remark}

\vskip 8pt
To conclude this section, we would like to remark that the results on the truncated matrix-valued $\X$-moment problem and the procedure for matrix-valued measure recovery are not direct generalizations of the scalar case. First, the matrix-valued moment problem cannot be reduced to the scalar case via certain scalarizing procedures as far as we know. For instance, it is not valid to solve the matrix-valued moment problem by separately considering the sequence of the entries at the same position in the matrices $\{S_{\ba}\}_{\ba\in\N^n_{2k}}$. This is because the off-diagonal entries of a PSD matrix-valued measure are not necessarily positive scalar measures, making the theory of the scalar moment problem inapplicable here. One may also consider the scalar sequence $\{\tr{S_{\ba}}\}_{\ba\in\N^n_{2k}}$ which, however, contains only partial information on the matrix-valued moments. It is thus impossible to recover the matrix-valued measure in this way. 
Second, extending the theory of the scalar moment problem to the matrix-valued case
requires carefully addressing additional complexities and intrinsic challenges present in the matrix-valued setting. 
In particular, the proof of Theorem \ref{th::FEC} is conducted in the quotient space $\RR[\bx]^m/\mathcal{I}_{\mS}$ due to the matrix-valued nature, which involves 
a subtle construction of the polynomial vectors $H_i$. In addition, the linear algebra procedure presented in Section \ref{sec::recover} is the first efficient approach for extracting matrix-valued measures from the matrix-valued moment matrix satisfying FEC. 
In comparison with the scalar counterpart in \cite{LasserreHenrion}, this procedure demands several delicate techniques tailored to the matrix-valued setting to handle the variables introduced by both the points $\bx^{(i)}$ and matrices $W_i$, making the justification of its correctness much more involved.

\section{A moment-SOS hierarchy for \eqref{eq::RCPSO} with SOS-convexity}\label{sec::CRSDP}
In this section, we first reformulate \eqref{eq::RCPSO} as a conic optimization problem, based on which we can then derive a moment-SOS hierarchy whose optima monotonically converge to the optimum of \eqref{eq::RCPSO}. Furthermore, the results in Section \ref{sec::mrecovery} enable us to detect finite convergence of the moment-SOS hierarchy and to extract optimal solutions.
%

\subsection{A conic reformulation}
For simplicity, we write
\[
P(\by, \bx)=\sum_{\ba\in\N^n} P_{\ba}(\by)\bx^{\ba}=\sum_{\bb\in\N^{\ell}} P_{\bb}(\bx) \by^{\bb},
\]
where $P_{\ba}(\by)\in\bS[\by]^{m}$ (resp. $P_{\bb}(\bx)\in\bS[\bx]^{m}$) is the coefficient 
matrix of $\bx^{\ba}$ (resp. $\by^{\bb}$) with $P(\bx,\by)$ being regarded as a polynomial matrix in $\bS[\bx]^{m}$ (resp. $\bS[\by]^{m}$). 
For a linear functional $\mL \colon \bS[\bx]^{m} \to \RR$, we let
\[
\mL(P(\by, \bx))\coloneqq\sum_{\bb\in\N^{\ell}} \mL(P_{\bb}(\bx)) \by^{\bb}\in\RR[\by],
\]
and for a linear functional $\mH \colon \RR[\by] \to \RR$, we let
\[
\mH(P(\by, \bx))\coloneqq\sum_{\bb\in\N^{\ell}} P_{\bb}(\bx) \mH(\by^{\bb})\in\bS[\bx]^{m}.
\]

To obtain a conic reformulation of \eqref{eq::RCPSO}, we need to assume the Slater condition to hold.
\begin{assumption}\label{assump3}
{\rm
The Slater condition holds for \eqref{eq::RCPSO}, 
i.e., there exists $\bar{\by}\in\Y$ such that $\theta_i(\bar{\by})>0$ for all $i=1,\ldots,s$, 
and $P(\bar{\by}, \bx)\succ 0$ for all $\bx\in\X$.}
\end{assumption}

\begin{proposition}\label{prop::sduality}
Under Assumptions \ref{assump1} and \ref{assump3},
there exists a finitely atomic matrix-valued measure $\Phi^{\star}\in\mathfrak{M}^m_+(\X)$ such that 
\[
f^{\star}=\inf_{\by\in\Y}\ f(\by)-\mL_{\Phi^{\star}}(P(\by, \bx)).
\]
If $\by^{\star}$ is an optimal solution to \eqref{eq::RCPSO}, then 
$\mL_{\Phi^{\star}}(P(\by^{\star}, \bx))=0$.
\end{proposition}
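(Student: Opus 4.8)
The plan is to view \eqref{eq::RCPSO} as a convex conic program over the cone $\mathcal{P}^m(\X)$ and to produce $\Phi^{\star}$ as an optimal Lagrange multiplier of the (semi-infinite) PMI constraint. First note that Assumption \ref{assump1} renders the problem convex: SOS-convexity of $f$ and of each $-\theta_i$ makes $f$ convex and $\Y$ a convex set, while PSD-SOS-convexity of $-P(\by,\bx)$ in $\by$ makes $\bv^{\intercal}P(\by,\bx)\bv$ concave in $\by$ for every $\bv$, so that $\{\by\mid P(\by,\bx)\succeq 0\}$ is convex for each fixed $\bx$. Since the robust constraint is exactly $P(\by,\cdot)\in\mathcal{P}^m(\X)$, I would rewrite the problem as $f^{\star}=\inf\{f(\by)\mid \by\in\Y,\ P(\by,\cdot)\in\mathcal{P}^m(\X)\}$ and, using $\mathcal{L}^m(\X)=\mathcal{P}^m(\X)^*$, form the Lagrangian dual
\[
d^{\star}=\sup_{\mL\in\mathcal{L}^m(\X)}\ \inf_{\by\in\Y}\ \bigl(f(\by)-\mL(P(\by,\bx))\bigr).
\]
Weak duality $d^{\star}\le f^{\star}$ is immediate, since $\mL(P(\by,\bx))\ge 0$ whenever $\by$ is feasible and $\mL\in\mathcal{L}^m(\X)$.

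The decisive reduction is that $P(\by,\bx)$ has some fixed bounded degree $d$ in $\bx$, so $P(\by,\cdot)$ always lies in the finite-dimensional space $V_d$ of polynomial matrices in $\bS[\bx]^m$ of degree at most $d$, and $\mL(P(\by,\bx))$ depends only on the restriction of $\mL$ to $V_d$. Hence the conic constraint reads $P(\by,\cdot)\in\mathcal{P}^m(\X)\cap V_d$, a closed convex cone in a finite-dimensional space, and the dual variable may be taken in its dual cone. This is where Assumption \ref{assump3} enters: the Slater point $\bar{\by}$ gives $P(\bar{\by},\bx)\succ 0$ on the \emph{compact} set $\X$ (Assumption \ref{assump1}(iii)), whence $P(\bar{\by},\cdot)\succeq\varepsilon I_m$ on $\X$ for some $\varepsilon>0$, so that $P(\bar{\by},\cdot)$ is an interior point of $\mathcal{P}^m(\X)\cap V_d$. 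Invoking conic duality under this interior-point condition then yields strong duality $f^{\star}=d^{\star}$ and attainment of the dual supremum at some $\mL^{\star}$ that is nonnegative on $\mathcal{P}^m(\X)$.

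It remains to realize $\mL^{\star}$ as a finitely atomic measure. Since $\X$ is compact (hence closed) and $\mL^{\star}$ is nonnegative on $\mathcal{P}^m(\X)$, Haviland's theorem for polynomial matrices (Theorem \ref{th::haviland}) supplies a representing measure $\Phi'\in\mathfrak{M}^m_+(\X)$ with $\mL^{\star}=\mL_{\Phi'}$, all of whose moments are finite by compactness. As only the moments of $\Phi'$ up to degree $d$ enter the dual objective, a matrix-valued Tchakaloff-type theorem (equivalently, passing to an extreme point of the relevant truncated matrix-valued moment cone) produces a finitely atomic $\Phi^{\star}=\sum_i W_i\delta_{\bx^{(i)}}\in\mathfrak{M}^m_+(\X)$ sharing those truncated moments. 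Then $\mL_{\Phi^{\star}}(P(\by,\bx))=\mL_{\Phi'}(P(\by,\bx))$ for all $\by$, so $\Phi^{\star}$ is again dual-optimal and $f^{\star}=\inf_{\by\in\Y}\bigl(f(\by)-\mL_{\Phi^{\star}}(P(\by,\bx))\bigr)$.

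For the complementary-slackness claim, let $\by^{\star}$ be an optimal solution of \eqref{eq::RCPSO}. Substituting $\by=\by^{\star}$ into the dual identity gives $f^{\star}\le f(\by^{\star})-\mL_{\Phi^{\star}}(P(\by^{\star},\bx))=f^{\star}-\mL_{\Phi^{\star}}(P(\by^{\star},\bx))$, so $\mL_{\Phi^{\star}}(P(\by^{\star},\bx))\le 0$; conversely, $\by^{\star}$ feasible means $P(\by^{\star},\cdot)\in\mathcal{P}^m(\X)$, and $\mL_{\Phi^{\star}}\in\mathcal{L}^m(\X)=\mathcal{P}^m(\X)^*$ forces $\mL_{\Phi^{\star}}(P(\by^{\star},\bx))\ge 0$, so the quantity vanishes. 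I expect the main obstacle to be the strong-duality-with-attainment step: justifying it rigorously in this a priori infinite-dimensional conic setting rests on the finite-dimensional reduction together with the interior-point reading of the Slater condition, and then on the finitely atomic reduction, where the matrix-valued Tchakaloff argument must be handled with care because the off-diagonal entries of a PSD matrix-valued measure need not be positive scalar measures.
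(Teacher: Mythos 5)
Your proposal is essentially correct, but it takes a genuinely different route from the paper's. The paper scalarizes the PMI constraint as $\bv^{\intercal}P(\by,\bx)\bv\ge 0$ over the compact index set $\X\times\V$ (with $\V$ the unit sphere in $\RR^m$), invokes the reducibility theorem for convex semi-infinite programs \cite[Theorem 4.1]{Borwein1981} --- whose hypothesis is precisely the ``any $\ell+1$ index points'' consequence of Assumption \ref{assump3} --- to reduce to at most $\ell$ active scalar constraints, and then applies the ordinary finite-dimensional Lagrange multiplier theorem; the atomic measure $\Phi^{\star}=\sum_i\lambda_i\tilde{\bv}^{(i)}(\tilde{\bv}^{(i)})^{\intercal}\delta_{\tilde{\bx}^{(i)}}$ then falls out explicitly, with at most $\ell$ atoms and rank-one weights. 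You instead stay at the matrix level, dualize the single conic constraint $P(\by,\cdot)\in\mathcal{P}^m(\X)\cap V_d$ in the finite-dimensional coefficient space $V_d$ (polynomial matrices of $\bx$-degree at most $d$), and obtain strong duality with dual attainment from the interior-point reading of the Slater condition; this is conceptually cleaner, avoids scalarization, and meshes naturally with the conic reformulation \eqref{eq::cp}, at the price of having to manufacture the finitely atomic representative yourself. Your complementary-slackness paragraph coincides with the paper's argument.

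Two details in your final step need repair, though both are fixable and you rightly flag this as the delicate part. First, the dual-optimal $\mL^{\star}$ is a priori defined only on $V_d$ and nonnegative only on $\mathcal{P}^m(\X)\cap V_d$, so Theorem \ref{th::haviland} does not apply as written; you must either extend $\mL^{\star}$ to all of $\bS[\bx]^{m}$ while preserving nonnegativity on $\mathcal{P}^m(\X)$ (the M.~Riesz extension applies because, for compact $\X$, writing $F=(-cI_m)+(F+cI_m)$ with $c$ large shows $V_d+\mathcal{P}^m(\X)=\bS[\bx]^{m}$), or work entirely in $V_d^{*}$. Second, no matrix-valued Tchakaloff theorem is available in the paper; the clean substitute is to observe that $(\mathcal{P}^m(\X)\cap V_d)^{*}$ is the conic hull of the evaluation functionals $F\mapsto\tr{F(\bx)W}$ with $\bx\in\X$, $W\succeq 0$, $\tr{W}=1$ --- this generating set is compact and lies in the hyperplane $\{L: L(I_m)=1\}$, hence its conic hull is closed, and Carath\'eodory then writes $\mL^{\star}$ as a finite conic combination, which is exactly a finitely atomic PSD matrix-valued representing measure. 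With these two points filled in, your argument is complete.
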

\begin{proof}
For any $\bv\in \V\coloneqq\{\bv\in\RR^m \mid \sum_{i=1}^m v_i^2=1\}$ and $\bx\in\X$, by Assumption \ref{assump1}(ii), we have that the function $-\bv^{\intercal}P(\,\cdot\,, \bx)\bv$ is convex in $\by$. Then, \eqref{eq::RCPSO} can be equivalently reformulated as the following convex semi-infinite program under Assumption \ref{assump1}:
\begin{equation}\label{eq::SIP}
f^{\star}=\inf_{\by\in\Y}\ f(\by)\quad \text{s.t.}\ \bv^{\intercal}P(\by, \bx)\bv\ge 0,\ \forall (\bx, \bv)\in \X\times\V. 
\end{equation}
Let $(\bx^{(0)}, \bv^{(0)}), (\bx^{(1)}, \bv^{(1)}), \ldots, (\bx^{(\ell)}, \bv^{(\ell)})$ be $\ell+1$ 
arbitrary points in $\X\times\V$. By Assumption \ref{assump3}, there exists $\bar{\by}\in\Y$  such that 
$P(\bar{\by}, \bx^{(i)})\succ 0$ for all $i=0, 1, \ldots, \ell$. Hence, it holds
$(\bv^{(i)})^{\intercal}P(\bar{\by}, \bx^{(i)})\bv^{(i)}>0$ for all $i=0, 1, \ldots, \ell$. As
$\X\times\V$ is compact in $\RR^n\times\RR^m$ (Assumption \ref{assump1}(iii)), 
recall by \cite[Theorem 4.1]{Borwein1981} that if for any arbitrary $\ell+1$
points in $\X\times\V$, the corresponding constraints in \eqref{eq::SIP} are strictly satisfied 
by some point from $\Y$, then \eqref{eq::SIP} is reducible. In other words, there
exist $\ell$ points $(\tilde{\bx}^{(1)}, \tilde{\bv}^{(1)}), 
\ldots, (\tilde{\bx}^{(\ell)}, \tilde{\bv}^{(\ell)})\in\X\times \V$ such that 
\[
f^{\star}=\inf_{\by\in\Y}\ f(\by)\quad \text{s.t.}\ 
(\tilde{\bv}^{(i)})^{\intercal}P(\by, \tilde{\bx}^{(i)})\tilde{\bv}^{(i)}\ge 0, \ i=1,
\ldots, \ell.
\]
Then, by the Lagrange multiplier theorem, 
we can find 
$\lambda_1,\ldots,\lambda_{\ell}>0$ such that
\[
f^{\star}=\inf_{\by\in\Y}\ f(\by)-\sum_{i=1}^{\ell}\lambda_i 
(\tilde{\bv}^{(i)})^{\intercal}P(\by, \tilde{\bx}^{(i)})\tilde{\bv}^{(i)}.
\]
Define a finitely atomic matrix-valued measure 
\[
\Phi^{\star}\coloneqq \sum_{i=1}^{\ell}\lambda_i\tilde{\bv}^{(i)}(\tilde{\bv}^{(i)})^{\intercal} \delta_{\tilde{\bx}^{(i)}}\in \mathfrak{M}^m_+(\X).
\]
Then, it holds that
\[
f^{\star}=\inf_{\by\in\Y}\ f(\by)-\mL_{\Phi^{\star}}(P(\by, \bx)).
\]
If $\by^{\star}$ is an optimal solution to \eqref{eq::RCPSO}, then 
$P(\by^{\star}, \bx)\succeq 0$ on $\X$ and hence 
\[
f^{\star}\le f(\by^{\star})-\mL_{\Phi^{\star}}(P(\by^{\star}, \bx))
\le f(\by^{\star})=f^{\star},
\]
which implies $\mL_{\Phi^{\star}}(P(\by^{\star}, \bx))=0$. 
\end{proof}

\begin{remark}
From the above proof, one can see that Proposition \ref{prop::sduality} remains true if the Slater condition is weakened as: For any $\ell+1$ points $\bx^{(0)}, \bx^{(1)}, \ldots, \bx^{(\ell)}\in\X$, there exists $\bar{\by}\in\Y$ such that $P(\bar{\by}, \bx^{(i)})\succ 0$ for all $i=0, 1, \ldots, \ell$.
\end{remark}


For any measure $\mu\in\mathfrak{m}_+(\Y)$, we define an associated linear functional $\mH_{\mu} \colon \RR[\by] \to \RR$ by $\mH_{\mu}(h)=\int_{\Y} h(\by)\ud \mu(\by)$ for all $h\in\RR[\by]$. Let us consider the following conic optimization problem:
\begin{equation}\label{eq::cp}
\left\{
   \begin{aligned}
     \tilde{f}\coloneqq\sup_{\rho, \Phi}&\  \rho\\
      \text{s.t.}&\ f(\by)-\rho-\mL_{\Phi}(P(\by, \bx))\in \mathcal{P}(\Y),\\
      &\ \rho\in\RR,\ \Phi\in\mathfrak{M}^m_+(\X),
   \end{aligned} 
   \right.
\end{equation}
whose dual reads as
\begin{equation}\label{eq::cd}
\left\{
   \begin{aligned}
     \hat{f}\coloneqq\inf_{\mu}\,&\  \mH_{\mu}(f)\\
      \text{s.t.}&\ \mu\in\mathfrak{m}_+(\Y),\ \mH_{\mu}(1)=1,\\
      &\ \mH_{\mu}(P(\by, \bx))\in\mathcal{P}^m(\X),
   \end{aligned} 
   \right.
\end{equation}
with $\mathcal{P}^m(\X)$ being defined in \eqref{eq::pm}.

\begin{theorem}\label{th::equivalent}
Under Assumptions \ref{assump1} and \ref{assump3}, it holds $\tilde{f}=\hat{f}=f^{\star}$.
\end{theorem}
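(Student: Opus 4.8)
The plan is to sandwich $f^{\star}$ between the two conic optima by establishing the chain $f^{\star}\le\tilde{f}\le\hat{f}\le f^{\star}$. This requires three ingredients: a feasible point of the primal \eqref{eq::cp} witnessing $\tilde{f}\ge f^{\star}$, a feasible point of the dual \eqref{eq::cd} witnessing $\hat{f}\le f^{\star}$, and the weak-duality inequality $\tilde{f}\le\hat{f}$ linking the two. Throughout I would lean on Proposition \ref{prop::sduality}, which under Assumptions \ref{assump1} and \ref{assump3} supplies a finitely atomic $\Phi^{\star}\in\mathfrak{M}^m_+(\X)$ with $f^{\star}=\inf_{\by\in\Y}\,f(\by)-\mL_{\Phi^{\star}}(P(\by,\bx))$, together with an optimizer $\by^{\star}$ of \eqref{eq::RCPSO}.

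For the primal lower bound I would take $(\rho,\Phi)=(f^{\star},\Phi^{\star})$. The defining property of $\Phi^{\star}$ says precisely that $f(\by)-\mL_{\Phi^{\star}}(P(\by,\bx))\ge f^{\star}$ for every $\by\in\Y$, i.e. the polynomial $f(\by)-f^{\star}-\mL_{\Phi^{\star}}(P(\by,\bx))$ lies in $\mathcal{P}(\Y)$; hence this pair is feasible for \eqref{eq::cp} and $\tilde{f}\ge f^{\star}$. For the dual upper bound I would take $\mu=\delta_{\by^{\star}}$. Since $\by^{\star}\in\Y$ we have $\mu\in\mathfrak{m}_+(\Y)$ and $\mH_{\mu}(1)=1$, while $\mH_{\mu}(P(\by,\bx))=P(\by^{\star},\bx)\in\mathcal{P}^m(\X)$ because $P(\by^{\star},\bx)\succeq0$ on $\X$; thus $\mu$ is feasible for \eqref{eq::cd} with objective $\mH_{\mu}(f)=f(\by^{\star})=f^{\star}$, giving $\hat{f}\le f^{\star}$.

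The remaining and most delicate step is weak duality. Given any primal-feasible $(\rho,\Phi)$ and dual-feasible $\mu$, I would integrate the primal constraint $f(\by)-\rho-\mL_{\Phi}(P(\by,\bx))\in\mathcal{P}(\Y)$ against the nonnegative measure $\mu$ on $\Y$, and use $\mH_{\mu}(1)=1$ to obtain $\mH_{\mu}(f)-\rho-\mH_{\mu}\bigl(\mL_{\Phi}(P(\by,\bx))\bigr)\ge0$. The crux is the Fubini-type interchange $\mH_{\mu}(\mL_{\Phi}(P(\by,\bx)))=\mL_{\Phi}(\mH_{\mu}(P(\by,\bx)))$, both sides being the double integral $\int_{\Y}\int_{\X}\tr{P(\by,\bx)\,\ud\Phi(\bx)}\,\ud\mu(\by)$; this interchange is legitimate since $P$ is polynomial, $\X$ is compact (Assumption \ref{assump1}(iii)), and $\mu,\Phi$ are finite, so the integrand is bounded and the double integral converges absolutely. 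Finally, dual feasibility gives $\mH_{\mu}(P(\by,\bx))\in\mathcal{P}^m(\X)$, and since $\mL_{\Phi}$ is a tracial $\X$-moment functional it is nonnegative on $\mathcal{P}^m(\X)$ by Haviland's theorem (Theorem \ref{th::haviland}); hence $\mH_{\mu}(\mL_{\Phi}(P(\by,\bx)))\ge0$ and $\rho\le\mH_{\mu}(f)$. Taking the supremum over primal-feasible pairs and then the infimum over dual-feasible $\mu$ yields $\tilde{f}\le\hat{f}$, closing the chain $f^{\star}\le\tilde{f}\le\hat{f}\le f^{\star}$ and forcing equality throughout. I expect the interchange of integration order to be the main technical obstacle; note that Assumptions \ref{assump1} and \ref{assump3} enter only through Proposition \ref{prop::sduality} in the primal lower bound, whereas the dual upper bound and weak duality need only compactness of $\X$ and Haviland's theorem.
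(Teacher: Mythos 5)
Your proposal is correct and follows essentially the same route as the paper: feasibility of $(f^{\star},\Phi^{\star})$ in \eqref{eq::cp} via Proposition \ref{prop::sduality} gives $\tilde{f}\ge f^{\star}$, weak duality gives $\tilde{f}\le\hat{f}$, and a Dirac measure on (a near-)optimal point of \eqref{eq::RCPSO} gives $\hat{f}\le f^{\star}$. The only cosmetic differences are that the paper uses a minimizing sequence with an $\varepsilon$-argument where you use the optimizer $\by^{\star}$ directly (legitimate under the standing assumption that the optimizer set is nonempty), and that you spell out the weak-duality interchange --- which, since $P$ is a polynomial with finitely many terms, is just a finite-sum linearity argument rather than a genuine Fubini issue --- where the paper simply cites weak duality.
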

\begin{proof}
Let $\Phi^{\star}\in\mathfrak{M}^m_+(\X)$ be the finitely atomic matrix-valued measure given in 
Proposition \ref{prop::sduality}. Then $(f^{\star}, \Phi^{\star})$ is feasible to \eqref{eq::cp} due to Proposition \ref{prop::sduality}. Thus, $\tilde{f}\ge f^{\star}$.
By the weak duality, we have $\hat{f}\ge\tilde{f}$. It remains to show $\hat{f}\le f^{\star}$.

Let $(\by^{(k)})_{k\in\N}$ be a minimizing sequence of \eqref{eq::RCPSO}.
Then, for any $\varepsilon>0$, there exists $k_{\varepsilon}\in\N$ such that $\by^{(k_{\varepsilon})}$ is feasible to \eqref{eq::RCPSO}
and $f(\by^{(k_{\varepsilon})})\le f^{\star}+\varepsilon$. 
The Dirac measure $\delta_{\by^{(k_{\varepsilon})}}$ centered at $\by^{(k_{\varepsilon})}$ is feasible to \eqref{eq::cd}.
Therefore, $\hat{f}\le \mH_{\delta_{\by^{(k_{\varepsilon})}}}(f)=f(\by^{(k_{\varepsilon})})  \le f^{\star}+\varepsilon.$ 
As $\varepsilon>0$ is arbitrary, we have $\hat{f}\le f^{\star}$ as desired. 
\end{proof}

\begin{remark}\label{rk::convex}
It is easy to verify that the conic reformulation \eqref{eq::cp}--\eqref{eq::cd} for 
\eqref{eq::RCPSO} and the results in Proposition \ref{prop::sduality} and 
Theorem \ref{th::equivalent} remain true 
if ``SOS-convex'' in Assumption \ref{assump1} (i) 
is weakened to ``convex'' and  ``PSD-SOS-convex'' in Assumption \ref{assump1} (ii) 
is weakened to ``PSD-convex''
(i.e., if Assumption \ref{assump1} is replaced by Assumption \ref{assump5} in Section \ref{sec::extension}).
\end{remark}
\begin{remark}
    Before we proceed, let us highlight the significance of the conic reformulation \eqref{eq::cp}--\eqref{eq::cd} 
    and explain how it could provide insights in solving \eqref{eq::RCPSO}.
    Essentially, through the reformulation \eqref{eq::cp}--\eqref{eq::cd}, the complexity of 
    \eqref{eq::RCPSO} is transferred to the conic constraints in \eqref{eq::cp} and  \eqref{eq::cd}, 
    which involve the cone $\mathcal{P}(\Y)$ of nonnegative polynomials on the set $\Y$,
    the cone $\mathfrak{M}^m_+(\X)$ of PSD matrix-valued measures supported on $\X$, and their dual cones. 
    While the exact representations of these cones are unavailable in general case, 
    there are various (matrix-valued) Positivstellens\"atze and the dual (matrix-valued) moment theories which can provide tractable approximations for these cones. Hence, by replacing these cones with suitable approximations, 
    the reformulation \eqref{eq::cp}--\eqref{eq::cd} offers us  
    a unified framework to derive tractable relaxations for \eqref{eq::RCPSO}. 
\end{remark}

\subsection{A moment-SOS hierarchy}
Let $\Theta\coloneqq\{\theta_1,\ldots, \theta_{s}\}\subseteq\RR[\by]$ collect the description polynomials of the semialgebraic set $\Y$.
Moreover, let
\[
\begin{aligned}
&k_{\by}\coloneqq\max\,\{\deg(f),\ \deg(\theta_1),\ldots, \deg(\theta_{s}), \ \deg_{\by}(P_{ij}),\ 
i,j=1,\ldots,m\},\\
&k_{\bx}\coloneqq\max\,\{\deg_{\bx}(P_{ij}),\ i,j=1,\ldots,m,\ \deg(G)\}.
\end{aligned}
\]
For each $k\ge\lceil k_{\bx}/2\rceil$, by replacing the cones $\mathcal{P}(\Y)$ and $\mathfrak{M}^m_+(\X)$ in \eqref{eq::cp} with the more tractable cones $\QM_{\lceil k_{\by}/2\rceil}(\Theta)$ and $\ms^m_k(G)$ respectively, we obtain the following SDP relaxation for \eqref{eq::RCPSO}:
\begin{equation}\label{eq::cpsdpSOS}
\left\{
   \begin{aligned}
     f^{\psdp}_k\coloneqq\sup_{\rho, \mS}&\  \rho\\
      \text{s.t.}&\ f(\by)-\rho-\mL_{\mS}(P(\by, \bx))\in 
      \QM_{\lceil k_{\by}/2\rceil}(\Theta),\\
      &\ \rho\in\RR,\ \mS\in\ms^m_k(G).
   \end{aligned} 
   \right.
\end{equation}
The dual of \eqref{eq::cpsdpSOS} reads as
\begin{equation}\label{eq::cdsdpSOS}
\left\{
   \begin{aligned}
     f^{\dsdp}_k\coloneqq\inf_{\bss}\,&\  \mH_{\bss}(f)\\
      \text{s.t.}&\ \bss\in\ms_{\lceil k_{\by}/2\rceil}(\Theta),\ \mH_{\bss}(1)=1,\\
      &\ \mH_{\bss}(P(\by, \bx))\in\QM^m_k(G),
   \end{aligned} 
   \right.
\end{equation}
where the linear functional $\mH_{\bss} \colon \RR[\by]_{2\lceil k_{\by}/2\rceil}\to \RR$ is defined by $\mH_{\bss}(h)=\sum_{\ba\in\supp{h}}h_{\ba}s_{\ba}$ for $h\in\RR[\by]_{2\lceil k_{\by}/2\rceil}$. 
We call \eqref{eq::cpsdpSOS}--\eqref{eq::cdsdpSOS} the \emph{moment-SOS hierarchy} for \eqref{eq::RCPSO} with SOS-convexity and call $k$ the \emph{relaxation order}. 

\subsubsection{Convergence analysis}
Next we present the convergence analysis of the moment-SOS hierarchy \eqref{eq::cpsdpSOS}--\eqref{eq::cdsdpSOS}. 
\begin{proposition}\label{prop::lagSOS}
Let Assumptions \ref{assump1} and \ref{assump3} hold.
Then the sequences $(f_k^{\psdp})_{k\ge\lceil k_{\bx}/2\rceil}$ and $(f_k^{\dsdp})_{k\ge\lceil k_{\bx}/2\rceil}$ are monotonically non-increasing and provide upper bounds on $f^{\star}$.
\end{proposition}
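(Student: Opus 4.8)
The plan is to establish the upper-bound property and the monotonicity separately, treating the primal sequence $(f_k^{\psdp})$ directly and deducing the dual sequence $(f_k^{\dsdp})$ from it via weak duality.

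\emph{Upper bound for the primal.} I would exhibit one explicit feasible point of \eqref{eq::cpsdpSOS} whose objective value equals $f^{\star}$. Take the finitely atomic measure $\Phi^{\star}\in\mathfrak{M}^m_+(\X)$ supplied by Proposition \ref{prop::sduality}, and let $\mS^{\star}$ be the truncation to degree $2k$ of its moment sequence; Corollary \ref{cor::nece} gives $\mS^{\star}\in\ms^m_k(G)$, and since $\deg_{\bx}(P)\le k_{\bx}\le 2k$ we have $\mL_{\mS^{\star}}(P(\by,\bx))=\mL_{\Phi^{\star}}(P(\by,\bx))$. The crux is to show that $F(\by):=f(\by)-f^{\star}-\mL_{\Phi^{\star}}(P(\by,\bx))$ belongs to $\QM_{\lceil k_{\by}/2\rceil}(\Theta)$. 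This is the only place the SOS-convexity hypotheses are genuinely used: writing $\Phi^{\star}=\sum_i\lambda_i\tilde{\bv}^{(i)}(\tilde{\bv}^{(i)})^{\intercal}\delta_{\tilde{\bx}^{(i)}}$ with $\lambda_i>0$, each summand of $-\mL_{\Phi^{\star}}(P)$ equals $\lambda_i$ times $-(\tilde{\bv}^{(i)})^{\intercal}P(\by,\tilde{\bx}^{(i)})\tilde{\bv}^{(i)}$, which is SOS-convex by Assumption \ref{assump1}(ii), so $F$ is SOS-convex by Assumption \ref{assump1}(i). By Proposition \ref{prop::sduality}, $F\ge 0$ on $\Y$ and the value $0$ is attained at an optimizer $\by^{\star}$.

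I would then invoke the SOS-convex representation theorem: the KKT conditions at $\by^{\star}$ produce nonnegative scalars $\mu_i$ so that $F-\sum_i\mu_i\theta_i$ is SOS-convex, nonnegative, and vanishes together with its gradient at $\by^{\star}$, hence is an SOS polynomial $\sigma_0$. Because the multipliers are scalars, $\deg(\mu_i\theta_i)=\deg(\theta_i)\le k_{\by}$ and $\deg(\sigma_0)\le k_{\by}$, so both fit the bound $2\lceil k_{\by}/2\rceil$ and $F=\sigma_0+\sum_i\mu_i\theta_i\in\QM_{\lceil k_{\by}/2\rceil}(\Theta)$. Thus $(f^{\star},\mS^{\star})$ is feasible for \eqref{eq::cpsdpSOS}, whence $f_k^{\psdp}\ge f^{\star}$. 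I expect the degree bookkeeping in this representation step to be the main obstacle, since it is precisely what ties the SOS-convexity assumption to the truncation order $\lceil k_{\by}/2\rceil$. The dual bound is then immediate from weak duality between the SDP \eqref{eq::cpsdpSOS} and \eqref{eq::cdsdpSOS}: $f_k^{\dsdp}\ge f_k^{\psdp}\ge f^{\star}$.

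\emph{Monotonicity.} Both claims reduce to elementary inclusions. For the primal, given any feasible $(\rho,\mS)$ of \eqref{eq::cpsdpSOS} at order $k+1$, its truncation $\mS':=\mS|_{\N^n_{2k}}$ satisfies $\mS'\in\ms^m_k(G)$ (the moment and localizing matrices of $\mS'$ are principal submatrices of the PSD matrices of $\mS$), while $\mL_{\mS'}(P)=\mL_{\mS}(P)$ because $\deg_{\bx}(P)\le 2k$; hence $(\rho,\mS')$ is feasible at order $k$ with the same objective, giving $f_k^{\psdp}\ge f_{k+1}^{\psdp}$. For the dual, since $\QM^m_k(G)\subseteq\QM^m_{k+1}(G)$ (the same certificate meets the looser degree bound) and the remaining constraints of \eqref{eq::cdsdpSOS} do not depend on $k$, the feasible set grows with $k$, so the infimum is non-increasing and $f_{k+1}^{\dsdp}\le f_k^{\dsdp}$. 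Combining the two parts completes the proof.
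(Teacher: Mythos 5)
Your proposal is correct and follows essentially the same route as the paper: exhibit $(f^{\star},\mS^{\star})$ as a feasible point of \eqref{eq::cpsdpSOS} via Proposition \ref{prop::sduality}, certify $f(\by)-f^{\star}-\mL_{\Phi^{\star}}(P(\by,\bx))\in\QM_{\lceil k_{\by}/2\rceil}(\Theta)$ using SOS-convexity, and conclude for the dual by weak duality. The only differences are cosmetic --- the paper invokes \cite[Theorem 3.3]{LasserreConvex} as a black box where you re-derive it via KKT plus the ``SOS-convex, nonnegative, attains zero $\Rightarrow$ SOS'' lemma, and you spell out the monotonicity inclusions that the paper leaves implicit.
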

\begin{proof}
By the weak duality $f_k^{\psdp}\le f_k^{\dsdp}$, we only need to prove that there exists a finitely atomic matrix-valued measure 
$\Phi^{\star}\in\mathfrak{M}^m_+(\X)$ such that 
\begin{equation}\label{eq::LQ}
f(\by) - f^{\star} -\mL_{\Phi^{\star}}(P(\by, \bx))\in\QM_{\lceil k_{\by}/2\rceil}(\Theta).
\end{equation}
As Assumptions \ref{assump1} and \ref{assump3} hold, by Proposition \ref{prop::sduality}, 
there exists a finitely atomic matrix-valued measure $\Phi^{\star}\in\mathfrak{M}^m_+(\X)$ such that 

\begin{equation}\label{eq::scp}
f^{\star}=\inf_{\by\in\RR^{\ell}}\ f(\by)-\mL_{\Phi^{\star}}(P(\by, \bx)) \quad\text{s.t. } 
\theta_1(\by)\ge 0, \ldots, \theta_s(\by)\ge 0.
\end{equation}
Suppose that $\Phi^{\star}=\sum_{i=1}^r W_i\delta_{\bx^{(i)}}\in\mathfrak{M}^m_+(\X)$
for some $\bx^{(1)}, \ldots, \bx^{(r)}\in\X$ and
$W_1, \ldots, W_r\in\bS_+^{m}$. For each $i=1,\ldots, r$, let  
$W_i=\sum_{k=1}^{m_i} \bv^{(i,k)}(\bv^{(i,k)})^{\intercal}$ for some 
$\bv^{(i,k)}\in\RR^m$. Then, 
\[
\begin{aligned}
  f(\by) -\mL_{\Phi^{\star}}(P(\by, \bx))=
  f(\by)-\sum_{i=1}^r\tr{P(\by, \bx^{(i)})W_i}
  =f(\by)-\sum_{i=1}^r\sum_{k=1}^{m_i} 
  (\bv^{(i,k)})^{\intercal}P(\by, \bx^{(i)})\bv^{(i,k)}.
  \end{aligned}
\] 
By Assumption \ref{assump1} and the definition of PSD-SOS-convexity, 
$f(\by) -\mL_{\Phi^{\star}}(P(\by, \bx))$ is SOS-convex. 
By Proposition \ref{prop::sduality}, each optimal solution to \eqref{eq::RCPSO} is 
also optimal to \eqref{eq::scp}.
Since the functions $f(\by) -\mL_{\Phi^{\star}}(P(\by, \bx))$, $-\theta_i(\by)$'s
in \eqref{eq::scp} are all SOS-convex and the set of optimal solutions of \eqref{eq::scp}
is nonempty, \cite[Theorem 3.3]{LasserreConvex}
states that there exists a convex SOS polynomial $\sigma(\by)\in\RR[\by]$ and scalars
$\lambda_1\ge 0, \ldots, \lambda_s\ge 0$, such that
\[
f(\by) -f^{\star}-\mL_{\Phi^{\star}}(P(\by, \bx))=\sigma(\by)+\sum_{j=1}^s\lambda_j\theta_j(\by).
\]
Clearly, we have $\deg(\sigma)\le 2\lceil k_{\by}/2\rceil$ which implies that
\eqref{eq::LQ} holds true and the conclusion follows.

\end{proof}

We next prove the
asymptotic convergence of this moment-SOS hierarchy.
We write $\be=(\be_1,\ldots,\be_{\ell})$ for the standard basis of $\RR^{\ell}$ and let $\bss_{\be}=(s_{\be_1},\ldots,s_{\be_{\ell}})$ for any feasible 
point $\bss=(s_{\ba})_{\ba\in\N^{\ell}_{2\lceil k_{\by}/2\rceil}}$ of \eqref{eq::cdsdpSOS}.

\begin{proposition}\label{prop::js}
Suppose that $Q(\by)\in\bS[\by]^{m}$ is PSD-SOS-convex.
Let $\bss=(s_{\ba})_{\ba\in\N^{\ell}_{2\lceil\deg(Q)/2\rceil}}$
satisfy $s_{\mathbf{0}}=1$ and $M_{\lceil\deg(Q)/2\rceil}(\bss)\succeq 0$. Then
$\mH_{\bss}(Q) \succeq Q(\bss_{\be}).$
\end{proposition}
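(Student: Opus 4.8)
The plan is to prove the matrix inequality $\mH_{\bss}(Q)\succeq Q(\bss_{\be})$ by testing it against arbitrary vectors, thereby reducing it to a scalar Jensen-type inequality for SOS-convex polynomials. First I would fix $\bv\in\RR^m$ and set $q_{\bv}(\by)\coloneqq\bv^{\intercal}Q(\by)\bv\in\RR[\by]$. By linearity of $\mH_{\bss}$ one has $\bv^{\intercal}\mH_{\bss}(Q)\bv=\mH_{\bss}(q_{\bv})$ and $\bv^{\intercal}Q(\bss_{\be})\bv=q_{\bv}(\bss_{\be})$, so it suffices to establish $\mH_{\bss}(q_{\bv})\ge q_{\bv}(\bss_{\be})$ for every $\bv$. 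Since $Q$ is PSD-SOS-convex (Definition \ref{psd-sos-convex}), each $q_{\bv}$ is an SOS-convex polynomial, and $\deg(q_{\bv})\le\deg(Q)$.

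The key ingredient is the gradient-inequality characterization of SOS-convexity: for an SOS-convex polynomial $q_{\bv}$, the polynomial $q_{\bv}(\by)-q_{\bv}(\bz)-\nabla q_{\bv}(\bz)^{\intercal}(\by-\bz)$ is a sum of squares in the joint variables $(\by,\bz)$ (see \cite{LasserreConvex}). I would specialize $\bz\coloneqq\bss_{\be}\in\RR^{\ell}$, a fixed point; the resulting polynomial $\Delta(\by)\coloneqq q_{\bv}(\by)-q_{\bv}(\bss_{\be})-\nabla q_{\bv}(\bss_{\be})^{\intercal}(\by-\bss_{\be})$ remains a sum of squares in $\by$ alone, and its $\by$-degree is at most $\deg(q_{\bv})\le\deg(Q)$, hence $\Delta$ is a sum of squares of polynomials of degree at most $\lceil\deg(Q)/2\rceil$.

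Next I would apply $\mH_{\bss}$ to $\Delta$. Writing $\Delta=\sum_i p_i^2$ with $\deg(p_i)\le\lceil\deg(Q)/2\rceil$ and letting $\bp_i$ denote the coefficient vector of $p_i$ in the monomial basis, the same computation as in Proposition \ref{prop::Lrepre} (its $m=1$ instance) gives $\mH_{\bss}(\Delta)=\sum_i\bp_i^{\intercal}M_{\lceil\deg(Q)/2\rceil}(\bss)\bp_i\ge0$, using $M_{\lceil\deg(Q)/2\rceil}(\bss)\succeq0$. On the other hand, expanding $\mH_{\bss}(\Delta)$ by linearity and using $\mH_{\bss}(1)=s_{\mathbf{0}}=1$ together with $\mH_{\bss}(\by)=(s_{\be_1},\ldots,s_{\be_{\ell}})=\bss_{\be}$, the affine term $\nabla q_{\bv}(\bss_{\be})^{\intercal}(\mH_{\bss}(\by)-\bss_{\be})$ vanishes, so that $\mH_{\bss}(\Delta)=\mH_{\bss}(q_{\bv})-q_{\bv}(\bss_{\be})$. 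Combining the two computations yields $\mH_{\bss}(q_{\bv})\ge q_{\bv}(\bss_{\be})$, and since $\bv\in\RR^m$ was arbitrary, $\mH_{\bss}(Q)-Q(\bss_{\be})\succeq0$.

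The only non-elementary step is the gradient-inequality representation invoked in the second paragraph, and this is exactly where SOS-convexity (rather than mere convexity) is essential: for a merely convex polynomial the difference is nonnegative but need not be a sum of squares, so $\mH_{\bss}$ applied to it could fail to be nonnegative. The remaining steps are routine bookkeeping, the one point requiring care being the degree count, which ensures that the positivity of $\Delta$ is certified at the truncation order $\lceil\deg(Q)/2\rceil$ matching the hypothesis $M_{\lceil\deg(Q)/2\rceil}(\bss)\succeq0$.
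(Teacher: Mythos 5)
Your proof is correct and follows essentially the same route as the paper: both reduce the matrix inequality to a scalar one by testing against arbitrary $\bv\in\RR^m$, noting that $\bv^{\intercal}Q(\by)\bv$ is SOS-convex, and then applying the extended Jensen's inequality for SOS-convex polynomials. The only difference is that the paper invokes \cite[Theorem 2.6]{LasserreConvex} as a black box at that point, whereas you re-derive it from the gradient-inequality SOS representation; your derivation, including the degree bookkeeping that matches the truncation order $\lceil\deg(Q)/2\rceil$, is sound.
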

\begin{proof}
As $\bv^{\intercal}Q(\by)\bv$ is SOS-convex in $\by$ for all $\bv\in\RR^m$, by
\cite[Theorem 2.6]{LasserreConvex}, it holds
\[
\bv^{\intercal}\mH_{\bss}(Q(\by))\bv=
\mH_{\bss}(\bv^{\intercal}Q(\by)\bv) \ge \bv^{\intercal}Q(\bss_{\be})\bv,
\]
for all $\bv\in\RR^m$. Hence we have $\mH_{\bss}(Q) \succeq Q(\bss_{\be}).$
\end{proof}

\begin{corollary}\label{cor::feasible}
Suppose that $-\theta_1(\by),\ldots,-\theta_s(\by)$ are SOS-convex and 
$-P(\by,\bx)$ is PSD-SOS-convex in $\by$ for all $\bx\in\X$. If $\bss$ is feasible to \eqref{eq::cdsdpSOS}, then $\bss_{\be}\in\Y$ and it is feasible to \eqref{eq::RCPSO}.
\end{corollary}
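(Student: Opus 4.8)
The plan is to verify the two defining conditions of feasibility for \eqref{eq::RCPSO} at the point $\bss_{\be}$ separately: the membership $\bss_{\be}\in\Y$ (i.e.\ $\theta_j(\bss_{\be})\ge0$ for each $j$) and the robust PMI $P(\bss_{\be},\bx)\succeq0$ for all $\bx\in\X$. In both cases the engine is a comparison inequality between the value of a convex object at the first-order-moment point $\bss_{\be}$ and its image under $\mH_{\bss}$: the scalar statement \cite[Theorem 2.6]{LasserreConvex} for the $\theta_j$'s, and its matrix analogue Proposition \ref{prop::js} for $-P$.

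First I would verify $\bss_{\be}\in\Y$. Feasibility gives $\bss\in\ms_{\lceil k_{\by}/2\rceil}(\Theta)$; unfolding the convention of Remark \ref{rk::putinar} (the diagonal matrix $\diag(\theta_1,\ldots,\theta_s)$), this yields $M_{\lceil k_{\by}/2\rceil}(\bss)\succeq0$ together with PSD localizing conditions, whose $(\mathbf{0},\mathbf{0})$ entries give $\mH_{\bss}(\theta_j)\ge0$ for each $j$. Since $-\theta_j$ is SOS-convex, $s_{\mathbf{0}}=\mH_{\bss}(1)=1$, and $\deg(\theta_j)\le k_{\by}$ ensures the required moment submatrix $M_{\lceil\deg(\theta_j)/2\rceil}(\bss)$ is a PSD principal submatrix of $M_{\lceil k_{\by}/2\rceil}(\bss)$, applying \cite[Theorem 2.6]{LasserreConvex} to $-\theta_j$ gives $\theta_j(\bss_{\be})\ge\mH_{\bss}(\theta_j)\ge0$, hence $\bss_{\be}\in\Y$.

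Next I would establish the PMI. From the constraint $\mH_{\bss}(P(\by,\bx))\in\QM^m_k(G)$ I would write $\mH_{\bss}(P(\by,\bx))=\Sigma_0(\bx)+(\Sigma_1(\bx),G(\bx))_m$ and invoke the same PSD-on-$\X$ argument used in the proof of Corollary \ref{cor::nece} (both summands are PSD on $\X$) to conclude $\mH_{\bss}(P(\by,\bx))\succeq0$ for every $\bx\in\X$. Then, fixing $\bx\in\X$ and applying Proposition \ref{prop::js} to the PSD-SOS-convex matrix $Q(\by)=-P(\by,\bx)$ (using $s_{\mathbf{0}}=1$ and $\deg_{\by}(P_{ij})\le k_{\by}$ so that $M_{\lceil k_{\by}/2\rceil}(\bss)\succeq0$ supplies the needed PSD moment submatrix) yields $P(\bss_{\be},\bx)\succeq\mH_{\bss}(P(\by,\bx))\succeq0$. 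As $\bx\in\X$ is arbitrary, the PMI holds on all of $\X$, which together with $\bss_{\be}\in\Y$ establishes feasibility.

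The arithmetic is routine once these comparison inequalities are in hand, so I do not expect a genuine obstacle; the one point requiring care is the bookkeeping that matches the PSDness supplied by $\bss\in\ms_{\lceil k_{\by}/2\rceil}(\Theta)$ to the precise hypotheses of Proposition \ref{prop::js} and \cite[Theorem 2.6]{LasserreConvex}, namely $s_{\mathbf{0}}=1$ together with a sufficiently large PSD moment matrix. This hinges on the degree bounds $\deg(\theta_j)\le k_{\by}$ and $\deg_{\by}(P_{ij})\le k_{\by}$ built into the definition of $k_{\by}$, guaranteeing that the relaxation order $\lceil k_{\by}/2\rceil$ is high enough; I would therefore state these degree inequalities explicitly rather than leave them implicit.
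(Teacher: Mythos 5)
Your proposal is correct and follows essentially the same route as the paper's proof: the extended Jensen inequality \cite[Theorem 2.6]{LasserreConvex} applied to the SOS-convex $-\theta_j$ for membership in $\Y$, and Proposition \ref{prop::js} applied to $-P(\cdot,\bx)$ for the PMI. You merely make explicit two steps the paper leaves implicit — that $\mH_{\bss}(\theta_j)\ge 0$ follows from the localizing-matrix conditions in $\ms_{\lceil k_{\by}/2\rceil}(\Theta)$ and that $\mH_{\bss}(P(\by,\bx))\succeq 0$ on $\X$ follows from membership in $\QM^m_k(G)$ — together with the degree bookkeeping, all of which is accurate.
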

\begin{proof}
By the extended Jensen’s inequality for SOS-convex polynomials 
\cite[Theorem 2.6]{LasserreConvex}, it holds 
$\theta_i(\bss_{\be})\ge \mH_{\bss}(\theta_i)\ge 0$
for $i=1,\ldots,s$, which implies $\bss_{\be}\in\Y$.
By Proposition \ref{prop::js}, for every $\bx\in\X$, we have 
$P(\bss_{\be}, \bx)\succeq \mH_{\bss}(P(\by, \bx))\succeq 0.$
So $\bss_{\be}$ is feasible to \eqref{eq::RCPSO}. 
\end{proof}

\begin{theorem}\label{th::asym_convergence_SOS}
Under Assumptions \ref{assump1}--\ref{assump3}, the following are true:
\begin{enumerate}
\item[\upshape (i)] $f^{\psdp}_k\searrow f^{\star}$
and $f^{\dsdp}_k\searrow f^{\star}$ as $k\rightarrow\infty$;
\item[\upshape (ii)] For any convergent subsequence $(\bss^{(k_i,\star)})_i$ 
of $(\bss^{(k,\star)})_k$ where each $\bss^{(k,\star)}$ is a minimizer of \eqref{eq::cdsdpSOS},
$\lim_{i\rightarrow\infty}\bss^{(k_i,\star)}_\be$ is a global minimizer of \eqref{eq::RCPSO}.
Consequently, if the set of optimal solutions of \eqref{eq::RCPSO} 
is a singleton, then $\lim_{k\rightarrow\infty}\bss^{(k,\star)}_\be$ is the unique global minimizer. 
\end{enumerate}
\end{theorem}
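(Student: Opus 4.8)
The plan is to lean on Proposition~\ref{prop::lagSOS} and weak duality, which already give the chain $f^{\star}\le f^{\psdp}_k\le f^{\dsdp}_k$ for every $k\ge\lceil k_{\bx}/2\rceil$ together with the fact that both sequences are non-increasing. Hence the only thing left for (i) is to produce, for each $\varepsilon>0$, a point feasible to the dual \eqref{eq::cdsdpSOS} (at all sufficiently large orders) whose objective value is at most $f^{\star}+\varepsilon$; that is, to show $\limsup_{k\to\infty}f^{\dsdp}_k\le f^{\star}$. This single inequality squeezes both sequences down to $f^{\star}$.

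To build such feasible points I would use a perturbation of an optimizer towards the Slater point. Let $\by^{\star}$ be a global minimizer of \eqref{eq::RCPSO}, let $\bar\by$ be the Slater point from Assumption~\ref{assump3}, and for $t\in(0,1]$ set $\by_t\coloneqq(1-t)\by^{\star}+t\bar\by$. Using the concavity of $\theta_i$ and of $\by\mapsto\bv^{\intercal}P(\by,\bx)\bv$ (equivalently, the PSD-convexity of $-P(\,\cdot\,,\bx)$, which follows from Assumption~\ref{assump1}), together with $\theta_i(\by^{\star})\ge0$, $P(\by^{\star},\bx)\succeq0$ and the strict inequalities at $\bar\by$, one gets $\theta_i(\by_t)\ge t\,\theta_i(\bar\by)>0$ and $\bv^{\intercal}P(\by_t,\bx)\bv\ge t\,\bv^{\intercal}P(\bar\by,\bx)\bv>0$ for all $\bx\in\X$ and all nonzero $\bv\in\RR^m$. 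Since $\X\times\{\bv:\|\bv\|=1\}$ is compact (Assumption~\ref{assump1}(iii)), this upgrades to $P(\by_t,\bx)\succ0$ \emph{uniformly} on $\X$, so Theorem~\ref{th::psatz} (using Assumption~\ref{assump2}) yields an order $k_t$ with $P(\by_t,\bx)\in\QM^m_{k_t}(G)$. The truncated moment sequence $\bss^{(t)}$ of $\delta_{\by_t}$ then satisfies $\mH_{\bss^{(t)}}(1)=1$, lies in $\ms_{\lceil k_{\by}/2\rceil}(\Theta)$ (because $\by_t\in\Y$), and obeys $\mH_{\bss^{(t)}}(P(\by,\bx))=P(\by_t,\bx)\in\QM^m_k(G)$ for every $k\ge k_t$; thus $\bss^{(t)}$ is feasible to \eqref{eq::cdsdpSOS} and $f^{\dsdp}_k\le\mH_{\bss^{(t)}}(f)=f(\by_t)$ for $k\ge k_t$. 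Letting $t\to0^+$ and using continuity of $f$ gives $f(\by_t)\to f(\by^{\star})=f^{\star}$, which is precisely the $\limsup$ bound, completing (i).

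For (ii), write $\by^{(i)}\coloneqq\bss^{(k_i,\star)}_\be$ and $\by^{\infty}\coloneqq\lim_i\by^{(i)}$. By Corollary~\ref{cor::feasible} each $\by^{(i)}$ is feasible to \eqref{eq::RCPSO}, and since the feasible set is closed ($\Y$ is closed and each condition $P(\by,\bx)\succeq0$ is closed), the limit $\by^{\infty}$ is feasible, so $f(\by^{\infty})\ge f^{\star}$. On the other hand, the extended Jensen inequality for SOS-convex polynomials \cite[Theorem~2.6]{LasserreConvex} gives $\mH_{\bss^{(k_i,\star)}}(f)\ge f(\by^{(i)})$; the left-hand side equals $f^{\dsdp}_{k_i}\to f^{\star}$ by part (i), and $f$ is continuous, so taking limits yields $f^{\star}\ge f(\by^{\infty})$. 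Hence $f(\by^{\infty})=f^{\star}$ and $\by^{\infty}$ is a global minimizer. If the optimizer set is the singleton $\{\by^{\star}\}$, every limit point of $(\bss^{(k,\star)}_\be)_k$ must equal $\by^{\star}$, and therefore the whole sequence converges to $\by^{\star}$.

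I expect the crux to be the second paragraph. Because $P(\by^{\star},\bx)$ is only positive \emph{semidefinite} on $\X$, the Dirac measure at $\by^{\star}$ itself need not be dual-feasible and Theorem~\ref{th::psatz} cannot be applied to it directly. The two delicate points are (a) exploiting PSD-convexity of $-P$ to slide to a strictly feasible $\by_t$ while keeping the objective near $f^{\star}$, and (b) passing from pointwise to \emph{uniform} positive definiteness on the compact set $\X$, so that Scherer--Hol certifies membership in $\QM^m_k(G)$ at a finite order; everything else is a routine combination of weak duality, Jensen's inequality, and closedness of the feasible set.
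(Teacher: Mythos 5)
Your proposal is correct and follows essentially the same route as the paper: part (i) squeezes $f^{\psdp}_k$ and $f^{\dsdp}_k$ between $f^{\star}$ (via Proposition~\ref{prop::lagSOS} and weak duality) and $f^{\star}+\varepsilon$ by moving the optimizer slightly toward the Slater point, invoking PSD-convexity of $-P$ to get strict positive definiteness on $\X$ and then Theorem~\ref{th::psatz} to certify dual feasibility of the Dirac moment sequence; part (ii) combines Corollary~\ref{cor::feasible}, closedness of the feasible set, and the extended Jensen inequality exactly as in the paper (the only cosmetic difference being that you apply Jensen at each $k_i$ and pass to the limit, whereas the paper applies it directly to the limit sequence).
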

\begin{proof}

(i). Let $\by^{\star}$ be a minimizer of \eqref{eq::RCPSO}
and $\bar{\by}$ be the Slater point given in Assumption \ref{assump3}.
Because $f$, $\Y$ are convex by Assumption \ref{assump1}(i), 
we can choose $0<t<1$ such that $\by'\coloneqq t \by^{\star}+(1-t)\bar{\by}\in\Y$ 
and $f(\by')\le f^{\star}+\varepsilon$ for an arbitrary $\varepsilon>0$. 
As $-P(\by, \bx)$ is PSD-SOS-convex in $\by$ for every $\bx\in\X$, it holds
\[
P(\by', \bx)\succeq t P(\by^{\star},\bx)+ (1-t) P(\bar{\by},\bx)\succ 0,
\]
for all $\bx\in\X$. 
Let $\bss'=(s'_{\ba})_{\ba\in\N^{\ell}_{{2\lceil k_{\by}/2\rceil}}}$ with
$s'_{\ba}=(\by')^{\ba}$. By Assumption \ref{assump2} and Theorem \ref{th::psatz},
there exists $k^{(\varepsilon)}\in\N$ such that $\bss'$ is feasible to 
\eqref{eq::cdsdpSOS} for all $k\ge k^{(\varepsilon)}$. Therefore, 
$f^{\dsdp}_k\le f^{\star}+\varepsilon$ for all $k\ge k^{(\varepsilon)}$.
As $\varepsilon>0$ is arbitrary, by Proposition \ref{prop::lagSOS} and weak duality, we have $f^{\star}\le f^{\psdp}_k\le f^{\dsdp}_k$, and so 
$f^{\dsdp}_k\searrow f^{\star}$ and $f^{\psdp}_k\searrow f^{\star}$ as $k\to\infty$.

(ii). 
Let $\bss^{\star}=(s^{\star}_{\ba})_{\ba\in\N^{\ell}_{2\lceil k_{\by}/2\rceil}}$ be such 
that $\lim_{i \to \infty} s^{(k_i,\star)}_{\ba}=s^{\star}_{\ba}$ 
for all $\ba$. As the feasible set of \eqref{eq::RCPSO} is closed, by Corollary \ref{cor::feasible},
$\bss^{\star}_\be$ is feasible to \eqref{eq::RCPSO}. Moreover, 
as $f(\by)$ is SOS-convex by Assumption \ref{assump1}(i), by (i) and Proposition \ref{prop::js}, it holds that $f^{\star}=\mH_{\bss^{\star}}(f)\ge f(\bss^{\star}_{\be})$,
which indicates that $\bss^{\star}_\be$ is a global minimizer of \eqref{eq::RCPSO}. 
\end{proof} 

\begin{remark}\label{rk::boundSOS}
Suppose that there exists a ball constraint $\sum_{i=1}^{\ell} y_i^2\le b^2$ for some $b\neq 0$ in 
the description of $\Y$. Let $t\in\N$ satisfy $t\ge \max\,\{\lceil \deg(\theta_j)/2\rceil,\ j=1,\ldots,s\}$. Let $\bss=(s_{\ba})_{\ba\in\N^{\ell}_{2t}}$ satisfy $\bss\in\ms_t(\Theta)$ and $\mH_{\bss}(1)=1$. 
Then, by \cite[Lemma 3]{CD2016}, it holds that 
\[
\Vert \bss\Vert\le \sqrt{\binom{\ell+t}{\ell}}\sum_{i=0}^t b^{2i}.
\]
In this case, the convergent subsequence $(\bss^{(k_i,\star)})_i$  of $(\bss^{(k,\star)})_k$ in 
Theorem \ref{th::asym_convergence_SOS} (ii) always exists. 
\end{remark}

The next theorem allows us to detect finite convergence of the moment-SOS hierarchy \eqref{eq::cpsdpSOS}--\eqref{eq::cdsdpSOS} and to extract an optimal solution whenever the FEC is satisfied.
\begin{theorem}\label{th::finite_convergence_SOS}
Let $(f_k^{\psdp}, \mS^{(k,\star)})$ and 
$\bss^{(k,\star)}$ be optimal solutions to \eqref{eq::cpsdpSOS} and \eqref{eq::cdsdpSOS},
respectively, for some $k\ge \lceil k_{\bx}/2\rceil$.
If the following FEC 
\begin{equation}\label{eq::FEC_SOS_convex}
\exists\lceil k_{\bx}/2\rceil\le t\le k\,\text{ s.t. }\,
\rank(M_{t}(\mS^{(k,\star)}))=\rank(M_{t-d_G}(\mS^{(k,\star)}))
\end{equation}
holds, then 
\begin{enumerate}
\item[\upshape (i)] $\mS^{(k,\star)}$ admits a representing measure $\Phi^{\star}=\sum_{i=1}^{r}
W_i\delta_{\bx^{(i)}}\in\mathfrak{M}^m_+(\X)$ for some points $\bx^{(1)}, \ldots, \bx^{(r)}\in\X$ and $W_1, \ldots, W_{r}\in\bS_+^m$;
\item[\upshape(ii)] $f^{\psdp}_k\le f^{\star}$ and the equality holds if Assumptions \ref{assump1} and \ref{assump3} are satisfied.
\end{enumerate}
Consequently, under the condition \eqref{eq::FEC_SOS_convex} and Assumptions \ref{assump1} and \ref{assump3}, if 
$f^{\psdp}_k=f^{\dsdp}_k$, then $f^{\psdp}_k=f^{\dsdp}_k=f^{\star}$ and 
\begin{enumerate}
\item[\upshape(iii)] $\bss^{(k,\star)}_{\be}$ is an optimal solution to \eqref{eq::RCPSO};
\item[\upshape(iv)] For any decomposition $W_i=\sum_{l=1}^{m_i} \bv^{(i,l)}(\bv^{(i,l)})^{\intercal}$, $\bv^{(i,l)}\in\RR^m$, $i=1,\ldots, r$, it holds that
\[P\left(\bss^{(k,\star)}_{\be}, \bx^{(i)}\right) \bv^{(i,l)}=0, 
\quad l=1,\ldots,m_i,i=1,\ldots,r.\]
\end{enumerate}
\end{theorem}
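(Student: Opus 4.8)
The plan is to deduce everything from the flat extension result Theorem \ref{th::FEC} together with the conic reformulation and the convexity facts already established. For part (i), I would apply Theorem \ref{th::FEC} to a truncation of $\mS^{(k,\star)}$ after the index shift $k\mapsto t-d_G$, where $t$ is the order at which \eqref{eq::FEC_SOS_convex} holds (note $t\ge\lceil k_\bx/2\rceil\ge d_G$, so the shift is admissible). Concretely, regard $(S^{(k,\star)}_\ba)_{\ba\in\N^n_{2(t-d_G)}}$ as the base sequence and its truncation to degree $2t$ as the candidate extension. Since $t\le k$ and $\mS^{(k,\star)}\in\ms^m_k(G)$, the conditions $M_{t-d_G}(\mS^{(k,\star)})\succeq0$, $M_t(\mS^{(k,\star)})\succeq0$ and $M_{t-d_G}(G\mS^{(k,\star)})\succeq0$ follow by passing to principal submatrices, and the rank equality required by Theorem \ref{th::FEC}(ii) is exactly \eqref{eq::FEC_SOS_convex}. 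Theorem \ref{th::FEC} then yields the atomic measure $\Phi^\star=\sum_{i=1}^r W_i\delta_{\bx^{(i)}}$ with $W_i\in\bS_+^m$ and $\bx^{(i)}\in\X$, which is (i).

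For part (ii), the key point is that $\Phi^\star$ reproduces all moments needed to evaluate $\mL_{\mS^{(k,\star)}}(P(\by,\bx))$. I would argue that the flat extension propagates uniquely to all orders (as in the proof of Theorem \ref{th::FEC}), so $\int_\X \bx^\ba\,\ud\Phi^\star=S^{(k,\star)}_\ba$ for every $\ba\in\N^n_{2t}$; since $t\ge\lceil k_\bx/2\rceil$ gives $2t\ge k_\bx\ge\deg_\bx(P_{ij})$, it follows that $\mL_{\mS^{(k,\star)}}(P(\by,\bx))=\mL_{\Phi^\star}(P(\by,\bx))$ as polynomials in $\by$. Primal feasibility places $f(\by)-f^{\psdp}_k-\mL_{\Phi^\star}(P(\by,\bx))$ in $\QM_{\lceil k_\by/2\rceil}(\Theta)\subseteq\mathcal{P}(\Y)$, so this polynomial is nonnegative on $\Y$. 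For any $\by$ feasible to \eqref{eq::RCPSO} we have $P(\by,\bx^{(i)})\succeq0$ and $W_i\succeq0$, hence $\mL_{\Phi^\star}(P(\by,\bx))=\sum_i\tr{P(\by,\bx^{(i)})W_i}\ge0$; therefore $f^{\psdp}_k\le f(\by)$, and taking the infimum gives $f^{\psdp}_k\le f^\star$. Under Assumptions \ref{assump1} and \ref{assump3}, Proposition \ref{prop::lagSOS} supplies the reverse inequality $f^{\psdp}_k\ge f^\star$, so equality holds.

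For the consequential statements, (ii) gives $f^{\psdp}_k=f^\star$, so the hypothesis $f^{\psdp}_k=f^{\dsdp}_k$ forces $f^{\dsdp}_k=f^\star$. Part (iii) follows by invoking Corollary \ref{cor::feasible} to conclude $\bss^{(k,\star)}_\be$ is feasible to \eqref{eq::RCPSO}, and then the extended Jensen inequality \cite[Theorem 2.6]{LasserreConvex} for the SOS-convex $f$ gives $f(\bss^{(k,\star)}_\be)\le\mH_{\bss^{(k,\star)}}(f)=f^{\dsdp}_k=f^\star$; feasibility forces the reverse inequality, so $\bss^{(k,\star)}_\be$ is optimal. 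For part (iv), I would exploit complementary slackness: set $\by^\star=\bss^{(k,\star)}_\be$, evaluate the nonnegative-on-$\Y$ polynomial from part (ii) at $\by^\star\in\Y$, and use $f(\by^\star)=f^\star=f^{\psdp}_k$ to get $\mL_{\Phi^\star}(P(\by^\star,\bx))\le0$; combined with $\mL_{\Phi^\star}(P(\by^\star,\bx))=\sum_i\tr{P(\by^\star,\bx^{(i)})W_i}\ge0$ this forces $\mL_{\Phi^\star}(P(\by^\star,\bx))=0$. Since every summand is nonnegative, each $\tr{P(\by^\star,\bx^{(i)})W_i}=\sum_l(\bv^{(i,l)})^\intercal P(\by^\star,\bx^{(i)})\bv^{(i,l)}=0$, and as $P(\by^\star,\bx^{(i)})\succeq0$ each quadratic form vanishes; the elementary fact that $A\succeq0$ with $\bv^\intercal A\bv=0$ implies $A\bv=0$ then yields $P(\bss^{(k,\star)}_\be,\bx^{(i)})\bv^{(i,l)}=0$.

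I expect the main obstacle to be the degree bookkeeping in part (ii): the flat extension a priori only guarantees that $\Phi^\star$ represents moments up to degree $2(t-d_G)$, whereas evaluating $\mL_{\mS^{(k,\star)}}(P(\by,\bx))$ requires matching moments up to degree $k_\bx\le 2t$. Making this rigorous requires invoking the uniqueness and propagation of the flat extension to order $2t$ and identifying it with the truncation of $\mS^{(k,\star)}$, which is the one place where one must look carefully inside the machinery behind Theorem \ref{th::FEC} rather than using it as a black box. Everything else reduces to standard facts about traces of products of PSD matrices and the Jensen inequality for SOS-convex functions.
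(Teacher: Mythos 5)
Your proposal is correct and follows essentially the same route as the paper: part (i) via Theorem \ref{th::FEC}, part (ii) by evaluating the primal-feasibility certificate at a feasible point together with Proposition \ref{prop::lagSOS}, part (iii) via Corollary \ref{cor::feasible} and the extended Jensen inequality, and part (iv) by the complementary-slackness/trace argument for PSD matrices. The only difference is that you are more explicit than the paper about the degree bookkeeping — checking that the flat extension at level $t$ propagates so that $\Phi^{\star}$ matches the moments of $\mS^{(k,\star)}$ up to degree $2t\ge k_{\bx}$, hence $\mL_{\mS^{(k,\star)}}(P(\by,\bx))=\mL_{\Phi^{\star}}(P(\by,\bx))$ — a point the paper subsumes in ``it follows from Theorem \ref{th::FEC}.''
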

\begin{proof}
(i). It follows from Theorem \ref{th::FEC}.
    
(ii). As $(f_k^{\psdp}, \mS^{(k,\star)})$ is feasible to \eqref{eq::cpsdpSOS}, thanks to (i), 
it holds
\begin{equation}\label{eq::ineqsos}
f(\by)-f^{\psdp}_k-\mL_{\mS^{(k,\star)}}(P(\by, \bx))=
f(\by)-f^{\psdp}_k-\sum_{i=1}^r\tr{W_iP(\by, \bx^{(i)})}\ge 0,
\end{equation}
for all $\by\in\Y$. Let $\by^{\star}$ be a global minimizer of 
\eqref{eq::RCPSO}. Noting
$\sum_{i=1}^r\tr{W_iP(\by^{\star}, \bx^{(i)})}\ge 0$, then by \eqref{eq::ineqsos},
we have
\[
f^{\psdp}_k\le f(\by^{\star})-\sum_{i=1}^r\tr{W_iP(\by^{\star}, \bx^{(i)})}
\le f(\by^{\star})=f^{\star}.
\]
If Assumptions \ref{assump1} and \ref{assump3} hold, then $f^{\psdp}_k\ge f^{\star}$ by Proposition \ref{prop::lagSOS}.  
Hence, $f^{\psdp}_k=f^{\star}$. 

(iii). By Corollary \ref{cor::feasible}, 
$\bss^{(k,\star)}_{\be}$ is feasible to \eqref{eq::RCPSO}. 
Since $f(\by)$ is SOS-convex, by (ii) and Proposition \ref{prop::js}, 
it holds
\[
f(\bss^{(k,\star)}_{\be})\le \mH_{\bss^{(k,\star)}}(f)=f^{\dsdp}_k=f^{\star},
\]
which implies that $\bss^{(k,\star)}_{\be}$ is a global minimizer of \eqref{eq::RCPSO}.

(iv). By (ii), (iii) and \eqref{eq::ineqsos}, we deduce that 
$-\sum_{i=1}^r\tr{W_iP(\bss^{(k,\star)}_{\be}, \bx^{(i)})}\ge 0$.
As $P(\bss^{(k,\star)}_{\be}, \bx^{(i)})$ and $W_i$ are both PSD, it follows
\[
0=\sum_{i=1}^r\tr{W_iP(\bss^{(k,\star)}_{\be}, \bx^{(i)})}=
\sum_{i=1}^r\sum_{l=1}^{m_i} (\bv^{(i,l)})^{\intercal}P(\bss^{(k,\star)}_{\be}, \bx^{(i)})\bv^{(i,l)}.
\]
The PSDness of $P(\bss^{(k,\star)}_{\be}, \bx^{(i)})$ implies
$P(\bss^{(k,\star)}_{\be}, \bx^{(i)}) \bv^{(i,l)}=0$ for all $l=1,\ldots,m_i$, $i=1,\ldots,r$.
\end{proof}


\subsubsection{Conditions for strong duality}
Now we give conditions under which there is no duality gap between \eqref{eq::cpsdpSOS} and \eqref{eq::cdsdpSOS}.
\begin{assumption}\label{assump4}
{\rm
(i) $\theta_{1}(\by)=b^2-\sum_{i=1}^{\ell}y_i^2$ for some $b\ne0$;
(ii) $\X$ has non-empty interior.}
\end{assumption}

As a consequence of \cite[Corollary 3.6]{MLMW2022}, we have the following result.
\begin{lemma}\label{lem::lambda}
Suppose that Assumption \ref{assump4}(i) holds. Then for any $k\ge\max\,\{\lceil k_{\bx}/2\rceil,\lceil k_{\by}/2\rceil\}$ and $\mS\in\ms^m_k(G)$, there exists $\lambda\in\RR$ such that
\[
f(\by)-\lambda-\mL_{\mS}(P(\by, \bx))\in\QM^\circ_k(\Theta),
\]
where $\QM_k^{\circ}(\Theta)$ denotes the interior of $\QM_k(\Theta)$.
\end{lemma}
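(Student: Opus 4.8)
The plan is to pass to the dual and to extract the constant $\lambda$ from the compactness forced by the ball constraint in Assumption~\ref{assump4}(i). Fix $\mS\in\ms^m_k(G)$ and set
\[
q(\by)\coloneqq f(\by)-\mL_{\mS}(P(\by, \bx))=f(\by)-\sum_{\bb\in\N^{\ell}}\mL_{\mS}(P_{\bb}(\bx))\by^{\bb}\in\RR[\by].
\]
Since $\deg(f)\le k_{\by}$ and the $\by$-degree of $\mL_{\mS}(P(\by,\bx))$ is at most $\deg_{\by}(P)\le k_{\by}$, we have $\deg(q)\le k_{\by}\le 2\lceil k_{\by}/2\rceil\le 2k$, so $q$ and every shift $q-\lambda$ lie in the ambient space $\RR[\by]_{2k}$ of $\QM_k(\Theta)$. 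The assertion is then equivalent to the existence of $\lambda\in\RR$ with $q-\lambda\in\QM^{\circ}_k(\Theta)$.

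Recall from Proposition~\ref{prop::Lrepre} (with $m=1$ and the diagonal convention of Remark~\ref{rk::putinar}) that $\ms_k(\Theta)$ is the dual cone of $\QM_k(\Theta)$. By the standard description of the interior of a closed convex cone, a polynomial $p\in\RR[\by]_{2k}$ lies in $\QM^{\circ}_k(\Theta)$ as soon as $\mH_{\bss}(p)>0$ for every $\bss\in\ms_k(\Theta)\setminus\{\mathbf{0}\}$. This is exactly where the ball constraint enters: the bound of \cite[Lemma 3]{CD2016} recorded in Remark~\ref{rk::boundSOS} is homogeneous of degree one in $\bss$, so it reads $\Vert\bss\Vert\le s_{\mathbf{0}}\,\sqrt{\binom{\ell+k}{\ell}}\sum_{i=0}^{k}b^{2i}$ for all $\bss\in\ms_k(\Theta)$. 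In particular $s_{\mathbf{0}}=0$ forces $\bss=\mathbf{0}$, and the normalized slice $B\coloneqq\{\bss\in\ms_k(\Theta):s_{\mathbf{0}}=1\}$ is compact. Hence the linear function $\bss\mapsto\mH_{\bss}(q)$ attains a finite minimum $C$ on $B$. Choosing any $\lambda<C$ gives $\mH_{\bss}(q-\lambda)=\mH_{\bss}(q)-\lambda\ge C-\lambda>0$ on $B$, and since $\mH_{\bss}(q-\lambda)=\mH_{\bss}(q)-\lambda s_{\mathbf{0}}$ scales linearly in $\bss$, the strict inequality propagates to all nonzero $\bss\in\ms_k(\Theta)$. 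This yields $q-\lambda\in\QM^{\circ}_k(\Theta)$, which is precisely the conclusion that \cite[Corollary 3.6]{MLMW2022} packages for the ball-constrained case.

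The step I expect to be the crux is landing in the \emph{interior} $\QM^{\circ}_k(\Theta)$ rather than merely in $\QM_k(\Theta)$. Strict positivity of $q-\lambda$ on the compact set $\Y$ alone would already give membership in $\QM_k(\Theta)$ through a quantitative Putinar representation for the ball, but interior membership additionally requires the dual cone $\ms_k(\Theta)$ to be pointed with a compact base and requires $\QM_k(\Theta)$ to be closed. Both fail without Assumption~\ref{assump4}(i): it is the ball constraint, through the uniform moment bound of Remark~\ref{rk::boundSOS}, that simultaneously renders $B$ compact and eliminates the $s_{\mathbf{0}}=0$ directions of $\ms_k(\Theta)$, so that a single sufficiently negative $\lambda$ produces a positivity margin that is uniform over the whole dual cone. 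Verifying the closedness of $\QM_k(\Theta)$ and the exact interior characterization in this ball-constrained (Archimedean) regime is the technical content encapsulated by \cite[Corollary 3.6]{MLMW2022}, from which the lemma then follows as stated.
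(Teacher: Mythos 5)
Your argument is correct, and it is worth noting that the paper itself offers no internal proof of this lemma: it is stated as an immediate consequence of \cite[Corollary 3.6]{MLMW2022}, so your dual-cone argument is essentially a self-contained reconstruction of what that citation supplies. The route you take --- identify $\ms_k(\Theta)$ as the dual cone of $\QM_k(\Theta)$, use the ball constraint to get the homogeneous moment bound $\Vert\bss\Vert\le s_{\mathbf{0}}\,C_k$, conclude that the slice $\{\bss\in\ms_k(\Theta):s_{\mathbf{0}}=1\}$ is a compact base of a pointed cone, and then push $\lambda$ below the minimum of $\bss\mapsto\mH_{\bss}(q)$ on that base --- is the standard and correct way to produce interior membership, and the degree bookkeeping ($\deg q\le k_{\by}\le 2k$, $k\ge\lceil\deg\theta_j/2\rceil$) checks out. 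Two small refinements: (a) the closedness of $\QM_k(\Theta)$, which you flag as a needed technical input, is actually not required --- the set $\{p:\mH_{\bss}(p)>0\ \forall\bss\in\ms_k(\Theta)\setminus\{0\}\}$ equals $\mathrm{int}(\cl\QM_k(\Theta))$, and since a convex set and its closure share the same affine hull (here full-dimensional, by pointedness of the dual), they share the same interior; (b) the implication $s_{\mathbf{0}}=0\Rightarrow\bss=\mathbf{0}$ should not be read off from a bound stated under the normalization $s_{\mathbf{0}}=1$, but it follows directly because the localizing constraint for $b^2-\Vert\by\Vert^2$ forces the diagonal entries $\mH_{\bss}(\by^{2\ba})$ of the PSD matrix $M_k(\bss)$ to satisfy $0\le\mH_{\bss}(\by^{2\ba})\le b^{2\vert\ba\vert}s_{\mathbf{0}}=0$, whence $M_k(\bss)=0$ and $\bss=\mathbf{0}$. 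With these touch-ups your proof stands on its own and, unlike the paper, does not outsource the key step.
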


\begin{lemma}\label{lem::S0}
Suppose that Assumption \ref{assump4}(ii) holds. Then for each $k\ge d_G$, there exists $\mS^{\circ}\in\ms^m_k(G)$
such that $M_k(\mS^{\circ})\succ 0$ and $M_{k-d_G}(G\mS^{\circ})\succ 0$.
\end{lemma}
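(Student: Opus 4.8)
The plan is to exhibit a single explicit sequence $\mS^{\circ}$ arising from a genuine matrix-valued measure and then to show that the associated moment and localizing matrices are not merely positive semidefinite but positive definite. Since any moment sequence of a measure $\Phi\in\mathfrak{M}^m_+(\X)$ automatically lies in $\ms^m_k(G)$ by Corollary \ref{cor::nece}, membership and the PSD conditions come for free; the entire work lies in upgrading ``$\succeq$'' to ``$\succ$''. The natural candidate is the ``scalar times identity'' measure $\Phi=\mu\,I_m$, which decouples the matrix structure and reduces everything to two classical scalar-coefficient (i.e.\ $m=1$) positivity statements.

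More precisely, using Assumption \ref{assump4}(ii) I would first pick an open ball $B$ contained in the interior of $\X$ on which $G(\bx)\succ0$ (any interior point at which $G$ is nonsingular has $G\succ0$, and such points are dense in $\mathrm{int}(\X)$), and let $\mu\in\mathfrak{m}_+(\X)$ be, say, Lebesgue measure restricted to $B$ (any measure with $\supp{\mu}=\overline{B}$ works). Define the scalar sequence $\bc=(c_{\ba})_{\ba\in\N^n_{2k}}$ by $c_{\ba}=\int_B \bx^{\ba}\,\ud\mu$ and set $\mS^{\circ}=(c_{\ba}I_m)_{\ba\in\N^n_{2k}}$, i.e.\ $S_{\ba}^{\circ}=\int_B\bx^{\ba}\,\ud(\mu I_m)$. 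A direct inspection of Definition \ref{def::mm} shows that $M_k(\mS^{\circ})$ has $(\ba,\bb)$-block $c_{\ba+\bb}I_m$, while $M_{k-d_G}(G\mS^{\circ})$ has $(\ba,\bb)$-block $I_m\otimes\big(\sum_{\bg}c_{\ba+\bb+\bg}G_{\bg}\big)$. Consequently $M_k(\mS^{\circ})$ is permutation-similar to $M_k(\bc)\otimes I_m$ and $M_{k-d_G}(G\mS^{\circ})$ to $I_m\otimes M_{k-d_G}(G\bc)$, where $M_k(\bc)$ and $M_{k-d_G}(G\bc)$ are the scalar moment and localizing matrices of $\bc$. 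Hence it suffices to prove $M_k(\bc)\succ0$ and $M_{k-d_G}(G\bc)\succ0$.

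Both follow from the positivity of integrals of squares against $\mu$. For $M_k(\bc)$, writing an arbitrary vector $\bz$ as the coefficient vector of a polynomial $p(\bx)$ of degree $\le k$, one gets $\bz^{\intercal}M_k(\bc)\bz=\int_B p(\bx)^2\,\ud\mu\ge0$, which vanishes only if $p\equiv0$ on the open set $B$, i.e.\ only if $\bz=0$; thus $M_k(\bc)\succ0$. For $M_{k-d_G}(G\bc)$, writing $\bw$ as the coefficient vector of a polynomial vector $W(\bx)\in\RR[\bx]^q$ of degree $\le k-d_G$, one gets $\bw^{\intercal}M_{k-d_G}(G\bc)\bw=\int_B W(\bx)^{\intercal}G(\bx)W(\bx)\,\ud\mu\ge0$; since $G\succ0$ throughout $B$, the integrand is strictly positive wherever $W(\bx)\ne0$, so the integral vanishes only if $W\equiv0$ on $B$, i.e.\ only if $\bw=0$. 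This yields $M_{k-d_G}(G\bc)\succ0$ and completes the proof.

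I expect the localizing-matrix step to be the crux, and it is precisely where Assumption \ref{assump4}(ii) is consumed: the argument for $M_{k-d_G}(G\bc)\succ0$ genuinely requires $G$ to be positive definite on a subset of positive $\mu$-measure, so the role of the nonempty interior is to furnish a ball $B$ on which $G\succ0$ rather than merely $G\succeq0$. (Were $G$ identically singular, a nonzero $W$ with $W^{\intercal}GW\equiv0$ could exist and the strict inequality would fail; the operative non-degeneracy is $\det G\not\equiv0$.) The moment-matrix step, by contrast, uses only that $\supp{\mu}$ has nonempty interior and is otherwise routine.
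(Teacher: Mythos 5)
Your proposal is correct and arrives at the same conclusion as the paper, but by a genuinely different route. The paper also seeds the construction with a scalar measure times the identity (it takes $\Phi=\diag(\phi,\ldots,\phi)$ with $\phi$ uniform on $\X$), but then argues by contradiction: given a putative kernel vector $\bv$ of $M_{k-d_G}(G\mS^{\circ})$, it forms the SOS matrix $\Sigma=T^{\intercal}T$ with $T$ built from $\bv$, splits $T$ into row vectors $H_j\in\RR[\bx]^{q}$, and shows that $\int_{\X}\sum_j H_j^{\intercal}GH_j\,\ud\phi=0$ forces $T\equiv 0$, i.e.\ $\bv=0$. You instead exploit the Kronecker structure directly: for $\Phi=\mu I_m$ the moment matrix equals $M_k(\bc)\otimes I_m$ and the localizing matrix is permutation-similar to $I_m\otimes M_{k-d_G}(G\bc)$, so both positive-definiteness claims reduce to the scalar case $m=1$, which you settle by the standard integral-of-squares computation. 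This decoupling buys a shorter, more transparent argument that dispenses with the contradiction setup and the $\Sigma$, $T_i$, $H_j$ bookkeeping, at the cost of no generality: the two proofs rest on exactly the same analytic facts.

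One further point in your favor: both proofs hinge on $G$ being positive \emph{definite} on an open set of positive measure. The paper asserts, for an arbitrary open bounded $\mathcal{O}\subseteq\X$, the existence of $\lambda>0$ with $G(\bx)\succeq\lambda I$ on $\mathcal{O}$ without justification; this fails if, say, $G(\bx)=\diag(1-x^2,0)$, in which case $\X=[-1,1]$ has nonempty interior yet the localizing matrix has an identically vanishing diagonal block and the lemma's conclusion is false. Your closing remark identifies precisely this: the operative nondegeneracy is $\det G\not\equiv 0$ (equivalently, that $\{G\succ0\}$ has nonempty interior), which is needed to choose your ball $B$. So your argument is no less rigorous than the paper's on this point — it makes explicit a hypothesis the paper's proof uses tacitly.
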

\begin{proof}
We only prove that there exists $\mS^{\circ}\in\ms^m_k(G)$
such that $M_{k-d_G}(G\mS^{\circ})\succ 0$. Similar arguments apply also to
$M_k(\mS^{\circ})$.
Suppose on the contrary that the conclusion is false.
Let $\Phi\in\mathfrak{M}^m_+(\X)$ be such that $\Phi=\diag(\phi,\ldots,\phi)$
where $\phi$ is the probability measure with uniform distribution on $\X$,
and $\mS^{\circ}=(S_{\ba})_{\ba\in\N^n_{2k}}$ where each
$S_{\ba}=\int_{\X}\bx^{\ba}\ud\Phi(\bx)$. Now fix a nonzero vector 
$\bv\in\RR^{mq\snd{k-d_G}}$  such that  ${\bv^\intercal} M_{k-d_G}
(G\mS^{\circ})\bv=0$. Let
\[
\Sigma(\bx)=(u_{k-d_G}(\bx)\otimes I_{mq})^{\intercal} \bv\bv^{\intercal} 
(u_{k-d_G}(\bx)\otimes I_{mq}).
\]
Then by Proposition \ref{prop::Lrepre}, it holds 
\[
\mL_{\mS^{\circ}}((\Sigma, G)_m)=\tr{\bv\bv^{\intercal}M_{k-d_G}(G\mS^{\circ})}=
{\bv^\intercal} M_{k-d_G}(G\mS^{\circ})\bv=0.
\]
For each $i=1,\ldots,mq$, let $\bv^{(i)}$ be the subvector of $\bv$ whose entries are
indexed by 
\[
i,\ mq+i,\ 2mq+i,\ \ldots, (\snd{k-d_G}-1)mq+i,
\]
and $T_i(\bx)=(\bv^{(i)})^{\intercal}u_{k-d_G}(\bx)\in\RR[\bx]$. Then,
\[
[T_1(\bx),\ldots, T_{mq}(\bx)]=\bv^{\intercal} 
(u_{k-d_G}(\bx)\otimes I_{mq})\quad\text{and}\quad
\Sigma(\bx)=T(\bx)^{\intercal}T(\bx).
\]
For each $j=1,\ldots,m$, let 
\[
H_j(\bx)=[T_{(j-1)q+1}(\bx), \ldots, T_{jq}(\bx)]\in\RR[\bx]^{q}.
\]
Then, 
\[
(\Sigma, G)_m=[H_i(\bx)^{\intercal} G(\bx) H_j(\bx)]_{i,j=1,\ldots,m}\quad
\]
and
\[
0=\mL_{\mS^{\circ}}((\Sigma, G)_m)
=\int_{\X}\sum_{j=1}^m H_j(\bx)^{\intercal}G(\bx) H_j(\bx)\ud\phi(\bx).
\]
As $H_j(\bx)^{\intercal}G(\bx) H_j(\bx)\ge 0$ for all $\bx\in\X$, we have 
\[
\int_{\X}H_j(\bx)^{\intercal}G(\bx) H_j(\bx)\ud\phi(\bx)=0,\quad\forall j=1,\ldots,m.
\]
Let $\mathcal{O}$ be an open and bounded subset of $\X$. Then, 
there exists $\lambda>0$ such that $G(\bx)\succeq \lambda I_m$ on $\mathcal{O}$,
and for each $j=1,\ldots,m$, 
\[
\begin{aligned}
0&=\int_{\X}H_j(\bx)^{\intercal}G(\bx) H_j(\bx)\ud\phi(\bx)\ge \int_{\mathcal{O}}H_j(\bx)^{\intercal}G(\bx) H_j(\bx)\ud\phi(\bx)\\
&\ge  \lambda\int_{\mathcal{O}}H_j(\bx)^{\intercal} H_j(\bx)\ud\phi(\bx)=\lambda\int_{\mathcal{O}} \sum_{i=1}^q T_{(j-1)q+i}(\bx)^2 \ud\phi(\bx).
\end{aligned}
\]
Since $\mathcal{O}$ is open, we have $T_i(\bx)\equiv 0$ for each $i=1,\ldots,mq$.
We then conclude $\bv=0$, yielding a contradiction.
\end{proof}

\begin{theorem}\label{th::dualgap}
Under Assumption \ref{assump4},  we have $f^{\psdp}_k=f^{\dsdp}_k$ for each $k\ge \lceil k_{\bx}/2\rceil$.
\end{theorem}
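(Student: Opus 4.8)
The plan is to obtain the equality $f^{\psdp}_k=f^{\dsdp}_k$ as an instance of strong (conic) duality. Weak duality already gives $f^{\psdp}_k\le f^{\dsdp}_k$, so it suffices to exhibit a strictly feasible point for the primal SDP \eqref{eq::cpsdpSOS} and then invoke the conic strong duality theorem: once the primal is strictly feasible (Slater's condition) and its value is finite, the duality gap vanishes and the dual infimum in \eqref{eq::cdsdpSOS} is attained.

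The construction of the Slater point is precisely what the two preceding lemmas are tailored for, one for each conic constraint in \eqref{eq::cpsdpSOS}. Using Assumption \ref{assump4}(ii), Lemma \ref{lem::S0} provides $\mS^{\circ}\in\ms^m_k(G)$ with $M_k(\mS^{\circ})\succ0$ and $M_{k-d_G}(G\mS^{\circ})\succ0$; as these are exactly the strict forms of the two semidefinite conditions defining $\ms^m_k(G)$, the sequence $\mS^{\circ}$ lies in the interior of that cone. Feeding this $\mS^{\circ}$ into Lemma \ref{lem::lambda}, whose hypothesis is Assumption \ref{assump4}(i), we then select $\lambda\in\RR$ for which $f(\by)-\lambda-\mL_{\mS^{\circ}}(P(\by,\bx))$ belongs to the interior $\QM^{\circ}_{\lceil k_{\by}/2\rceil}(\Theta)$ of the quadratic module. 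Consequently the pair $(\rho,\mS)=(\lambda,\mS^{\circ})$ satisfies both conic constraints of \eqref{eq::cpsdpSOS} strictly, i.e. it is a Slater point.

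With primal strict feasibility established, I would close the argument by the conic duality theorem, which yields $f^{\psdp}_k=f^{\dsdp}_k$. The only remaining technical point is the finiteness of the common value: if the primal supremum were $+\infty$ then weak duality forces $f^{\dsdp}_k=+\infty$ as well, so the asserted equality holds trivially, while in the finite case Slater's condition closes the gap; the compactness of $\Y$ coming from the ball constraint $\theta_1(\by)=b^2-\sum_{i=1}^{\ell}y_i^2\ge0$ in Assumption \ref{assump4}(i) is what ultimately rules out pathological unboundedness. I expect essentially all of the substantive work to reside in Lemmas \ref{lem::S0} and \ref{lem::lambda}; given them, the main (and only) subtlety is that the Slater point must lie in the interior of the full product cone $\ms^m_k(G)\times\QM_{\lceil k_{\by}/2\rceil}(\Theta)$ simultaneously, which is exactly why the moment-side and the SOS-side constraints are treated separately by the two lemmas, and that the quadratic-module order furnished by Lemma \ref{lem::lambda} be matched with the order $\lceil k_{\by}/2\rceil$ occurring in \eqref{eq::cpsdpSOS}.
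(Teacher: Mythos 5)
Your proposal is correct and follows essentially the same route as the paper: the paper's proof is exactly "by Lemmas \ref{lem::lambda} and \ref{lem::S0}, \eqref{eq::cpsdpSOS} is strictly feasible and the conclusion follows," i.e.\ the two lemmas supply the Slater point for the product cone and conic strong duality closes the gap. Your additional remarks on matching the quadratic-module order to $\lceil k_{\by}/2\rceil$ and on the trivial infinite-value case are sound refinements of the same argument.
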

\begin{proof}
By Lemma \ref{lem::lambda} and \ref{lem::S0}, 
\eqref{eq::cpsdpSOS} is strictly feasible and the conclusion follows.
\end{proof}

\subsubsection{An example}
For all numerical examples in the sequel, we use {\sf Yalmip} \cite{YALMIP} to model SDPs and then rely on {\sf Mosek} \cite{mosek} to solve them\footnote{The script is available at
\url{https://github.com/wangjie212/PMOptimization}.}. 

\begin{example}\label{ex::1}
Consider the following instance of \eqref{eq::RCPSO}:
\begin{equation}\label{eq::ex1}
\begin{aligned}
f^{\star}\coloneqq\inf_{\by\in\RR^2}&\ f(\by)\quad
\rm{s.t.}&\ P(\by, \bx)\succeq 0,\ 
\forall \bx\in \X\coloneqq\{\bx\in\RR^2 \mid G(\bx)\succeq 0\},\\
\end{aligned}
\end{equation}
where 
\[
P(\by, \bx)=
\left[
\begin{array}{ccc}
   1-(x_1y_1-x_2y_2)^2 & & 2(x_2y_1+x_1y_2) \\
   2(x_2y_1+x_1y_2)   && 1
\end{array}
\right]
\]
and 
\[
G(\bx)=
\left[
\begin{array}{cccc}
   1-x_1 & x_2 & 0 & 0   \\
   x_2   & 1+x_1 & 0 & 0 \\
   0 & 0  & x_1^2+x_2^2-1 & 0 \\
   0 & 0  & 0 & x_1x_2 \\
\end{array}
\right].
\]
It is clear that 
\[
\X=\{\bx\in\RR^2 \mid x_1^2+x_2^2=1,\ x_1x_2\ge 0\}.
\]
Geometrically, the feasible region is constructed by rotating the shape in the 
$\by$-plane defined by $y_1^2+4y_2^2\le 1$ continuously
around the origin by $90^{\circ}$ clockwise and then taking the common area of 
these shapes in this process. The feasible set (the gray area) is displayed in Figure \ref{fig::ex1}.
\begin{figure}
\centering
\scalebox{0.4}{
\includegraphics[trim=80 190 80 200,clip]{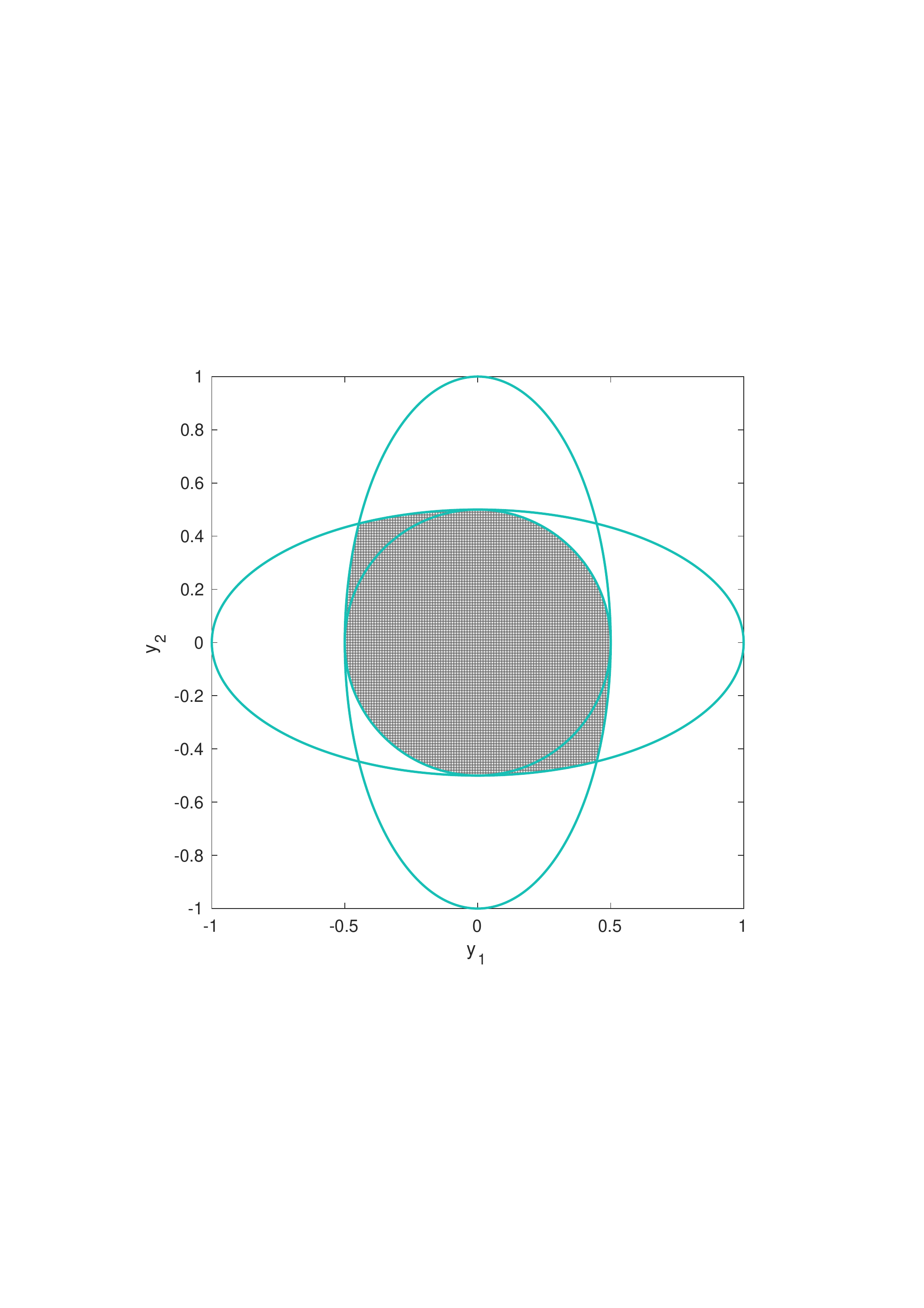}}
\caption{The feasible set (the gray area) of the problem \eqref{eq::ex1} 
in Example \ref{ex::1}\label{fig::ex1}. }
\end{figure}

By Corollary \ref{cor::psdSOSconcave}, it is easy to check that $-P(\by, \bx)$ is PSD-SOS-convex in $\by$ for every $\bx\in\X$. Consider the following objective functions $f_1(\by)=(y_1-1)^2+(y_2-1)^2$ and $f_2(\by)=(1+y_1)^2+(1-y_2)^2$, respectively. Clearly, both $f_1$ and $f_2$ are SOS-convex. 
The computational results are shown in Table \ref{tab::1}.

For $f_1$, the optimal solution is 
$(\sqrt{2}/4, \sqrt{2}/4)\approx (0.3536, 0.3536)$ and 
the optimal value is $2(\sqrt{2}/4-1)^2\approx 0.8358$. 
We solve the SDP relaxations \eqref{eq::cpsdpSOS} and \eqref{eq::cdsdpSOS} with $k=1, 2$. 
For $k=2$, the rank condition \eqref{eq::FEC_SOS_convex} is satisfied with $t=1$ and 
$f^{\psdp}_2=f^{\dsdp}_2=0.8358$. 
Then by Theorem~\ref{th::finite_convergence_SOS}, global optimality is certified and a minimizer $\bss^{(2,\star)}_{\be}=(0.3536, 0.3536)$ can be extracted. 

For $f_2$, the optimal solution is 
$(-\sqrt{5}/5, \sqrt{5}/5)\approx (-0.4472, 0.4472)$ and 
the optimal value is $2(\sqrt{5}/5-1)^2\approx 0.6111$. 
We solve the SDP relaxations \eqref{eq::cpsdpSOS} and \eqref{eq::cdsdpSOS} with $k=1, 2, 3$. 
As we can see, the global optimality is certified by the rank condition \eqref{eq::FEC_SOS_convex} at $k=3$ with $t=2$, even though it is already achieved at $k=2$.
A minimizer $\bss^{(3,\star)}_{\be}=(-0.4472, 0.4472)$ can be extracted.  
\begin{table}[htbp]
\caption{Computational results for Example \eqref{ex::1}.}\label{tab::1}
\renewcommand\arraystretch{1.2}
\centering
\resizebox{\linewidth}{!}{
\begin{tabular}{cccccccc}
\midrule[0.8pt]
\multirow{2}{*}{} &&\multicolumn{2}{c}{results for $f_1$}&
&\multicolumn{3}{c}{results for $f_2$} \\
		\cline{3-4} \cline{6-8}
&& $k=1$ & $k=2$& & $k=1$ & $k=2$ & $k=3$\\
\midrule[0.4pt]
		$f^{\psdp}_k$/time &&1.2497/0.61s& 0.8358/0.68s&&1.2499/0.62s&0.6111/0.67s&0.6111/1.00s \\
		$f^{\dsdp}_k$/time &&1.2496/0.52s& 0.8358/0.77s&&1.2497/0.44s&0.6111/0.77s &0.6111/2.08s\\
		FEC \eqref{eq::FEC_SOS_convex} && false & true && false & false & true\\
\midrule[0.8pt]
\end{tabular}}
\end{table}
\qed
\end{example}

\subsection{The linear case and the generalized matrix-valued moment problem}
If $\Y=\RR^{\ell}$, $f(\by)$ and $P(\bx,\by)$ are affine in $\by$, then \eqref{eq::RCPSO} becomes the robust polynomial semidefinite program \eqref{eq::LSIMP} which was extensively studied in \cite{SH2006}. In this case, we will see that the dual problem \eqref{eq::cdsdpSOS} recovers the matrix SOS relaxation for \eqref{eq::LSIMP} proposed in \cite{SH2006}. On the other hand, as a complement, the primal problem \eqref{eq::cpsdpSOS} allows us to detect finite convergence and extract optimal solutions.


\subsubsection{Robust polynomial semidefinite programming}
Consider the robust polynomial semidefinite programming problem which is a special case of \eqref{eq::RCPSO}:
\begin{equation}\label{eq::LSIMP}
\begin{aligned}
\tau^{\star}\coloneqq\inf_{\by\in\RR^{\ell}}&\ \bc^{\intercal}\by\quad
\text{s.t.}&\ \sum_{i=1}^\ell P_i(\bx) y_i-P_0(\bx)\succeq 0,\ 
\forall \bx\in \X\subseteq\RR^n,\\
\end{aligned}
 \tag{RPSDP}
\end{equation}
where $\bc=[c_1,\ldots,c_\ell]^{\intercal}\in\RR^{\ell}$ and $P_i(\bx)\in\bS[\bx]^{m}$, $i=0, 1,\ldots,\ell$.

Applying the conic reformulation \eqref{eq::cp}
to \eqref{eq::LSIMP} with $\Y=\RR^{\ell}$, we obtain
\begin{equation}\label{eq::lcp}
\left\{
   \begin{aligned}
     \sup_{\rho, \Phi}&\  \rho\\
      \text{s.t.}&\ \bc^{\intercal}\by-\rho-
      \sum_{i=1}^{\ell}\mL_{\Phi}(P_i(\bx))y_i+\mL_{\Phi}(P_0(\bx))
      \in \mathcal{P}(\RR^{\ell}),\\
      &\ \rho\in\RR,\ \Phi\in\mathfrak{M}^m_+(\X).
   \end{aligned} 
   \right.
\end{equation}
For any $\rho\in\RR$ and $\Phi\in\mathfrak{M}^m_+(\X)$, it holds
\[
\bc^{\intercal}\by-\rho- 
\sum_{i=1}^{\ell}\mL_{\Phi}(P_i(\bx))y_i+\mL_{\Phi}(P_0(\bx))
=\sum_{i=1}^{\ell}\left(c_i-\int_{\X}P_i(\bx)\ud\Phi(\bx)\right)y_i
-\rho+\int_{\X}P_0(\bx)\ud\Phi(\bx).
\]
Thus, if $(\rho, \Phi)$ is feasible to \eqref{eq::lcp}, then we necessarily have
\[
c_i=\int_{\X}P_i(\bx)\ud\Phi(\bx),\ i=1,\ldots,\ell,\quad \text{and}\quad
\rho\le \int_{\X}P_0(\bx)\ud\Phi(\bx),
\]
and \eqref{eq::lcp} can be rewritten as
\begin{equation}\label{eq::lcp2}
   \begin{aligned}
     \sup_{\Phi\in \mathfrak{M}^m_+(\X)}&\  
     \int_{\X}P_0(\bx)\ud\Phi(\bx)\quad
      \text{s.t.}&\ \int_{\X}P_i(\bx)\ud\Phi(\bx)=c_i,\ i=1,\ldots,\ell.
   \end{aligned} 
\end{equation}

\begin{remark}
Note that the generalized moment problem extensively studied by Lasserre in \cite{Lasserre2008} is a special case of \eqref{eq::lcp2} with $m=1$ and $G(\bx)$ being a diagonal matrix.
\end{remark}

The dual of \eqref{eq::lcp} reads as
\begin{equation}\label{eq::lcd}
\left\{
   \begin{aligned}
     \inf_{\mu}&\  \bc^{\intercal}\mH_{\mu}(\by)\\
      \text{s.t.}&\ \mu\in\mathfrak{m}_+(\RR^{\ell}),\ \mH_{\mu}(1)=1,\\
      &\ \sum_{i=1}^{\ell}P_i(\bx)\mH_{\mu}(y_i)-P_0(\bx)\in\mathcal{P}^m(\X). 
   \end{aligned} 
   \right.
\end{equation}
By identifying $\mH_{\mu}(\by)$ with $\by$, we see that \eqref{eq::lcd} is actually equivalent to \eqref{eq::LSIMP}.

For each $k\ge\lceil k_{\bx}/2\rceil$, replacing the cone $\mathfrak{M}^m_+(\X)$ in \eqref{eq::lcp2} by $\ms^m_k(G)$, 
we obtain the corresponding moment relaxation:
\begin{equation}\label{eq::lcpsdp}
\tau^{\psdp}_k\coloneqq\sup_{\mS\in\ms^m_k(G)}\  
     \mL_{\mS}(P_0(\bx))\quad
      \text{ s.t. }\quad\ \mL_{\mS}(P_i(\bx))=c_i,\ i=1,\ldots,\ell,
\end{equation}
whose dual is
\begin{equation}\label{eq::lcdsdp}
\tau^{\dsdp}_k\coloneqq\inf_{\by\in\RR^{\ell}}\ \bc^{\intercal}\by\quad
\text{s.t.}\quad\ \sum_{i=1}^{\ell} P_i(\bx) y_i-P_0(\bx)\in\QM^m_k(G).\ 
\end{equation}
Note that when $\X$ is compact, Assumption \ref{assump1} is satisfied for \eqref{eq::LSIMP}. 
Then, as direct consequences of Theorems \ref{th::asym_convergence_SOS} and 
\ref{th::finite_convergence_SOS}, we get the following
theorems.
\begin{theorem}\label{th::sym_conv_L}
Let Assumptions \ref{assump2}--\ref{assump3} hold.
Then, 
\begin{enumerate}
\item[\upshape (i)] $\tau^{\psdp}_k\searrow\tau^{\star}$ and $\tau^{\dsdp}_k\searrow\tau^{\star}$ as $k\to\infty$;
\item[\upshape (ii)] For any convergent subsequence $(\by^{(k_i,\star)})_i$ of 
$(\by^{(k,\star)})_k$ where $\by^{(k,\star)}$ is an optimal solution to \eqref{eq::lcdsdp}, $\lim_{i\to\infty} \by^{(k_i,\star)}$
is an optimal solution to \eqref{eq::LSIMP}.
\end{enumerate}
\end{theorem}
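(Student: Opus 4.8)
The plan is to obtain Theorem~\ref{th::sym_conv_L} as a direct specialization of Theorem~\ref{th::asym_convergence_SOS}. Two things must be checked: first, that \eqref{eq::LSIMP} is an instance of \eqref{eq::RCPSO} for which Assumptions~\ref{assump1}--\ref{assump3} all hold; and second, that the relaxations \eqref{eq::lcpsdp}--\eqref{eq::lcdsdp} coincide with the generic hierarchy \eqref{eq::cpsdpSOS}--\eqref{eq::cdsdpSOS} for this instance, under the natural identifications $f^{\star}=\tau^{\star}$, $f_k^{\psdp}=\tau_k^{\psdp}$, $f_k^{\dsdp}=\tau_k^{\dsdp}$, and $\bss_\be\leftrightarrow\by$. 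Once both are in place, (i) and (ii) read off immediately from parts (i) and (ii) of Theorem~\ref{th::asym_convergence_SOS}.

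First I would verify Assumption~\ref{assump1} for \eqref{eq::LSIMP} using the hypotheses here. Since $\Y=\RR^{\ell}$ there are no constraint polynomials $\theta_j$, so part (i) reduces to the SOS-convexity of $f(\by)=\bc^{\intercal}\by$, which is immediate because its Hessian is the zero matrix and $0$ is trivially an SOS matrix. For part (ii), $-P(\by,\bx)=P_0(\bx)-\sum_i P_i(\bx)y_i$ is affine in $\by$, hence $\nabla_{\by\by}(\bv^{\intercal}(-P)\bv)=0$ is an SOS matrix for every $\bv\in\RR^m$, so $-P(\by,\bx)$ is PSD-SOS-convex. Finally Assumption~\ref{assump2} confines $\X$ to the ball of radius $r$ and $\X$ is closed, giving compactness, i.e.\ part (iii). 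Together with the assumed Assumptions~\ref{assump2} and~\ref{assump3}, all hypotheses of Theorem~\ref{th::asym_convergence_SOS} are met.

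Next I would make the identification of the two hierarchies explicit. In \eqref{eq::cpsdpSOS} with $\Theta=\emptyset$, the polynomial $f(\by)-\rho-\mL_{\mS}(P(\by,\bx))=\sum_i\bigl(c_i-\mL_{\mS}(P_i(\bx))\bigr)y_i-\rho+\mL_{\mS}(P_0(\bx))$ is affine in $\by$, so membership in the SOS cone $\QM_{\lceil k_{\by}/2\rceil}(\Theta)$ forces every linear coefficient to vanish, i.e.\ $\mL_{\mS}(P_i(\bx))=c_i$, and $\rho\le\mL_{\mS}(P_0(\bx))$; maximizing $\rho$ returns exactly \eqref{eq::lcpsdp}, whence $f_k^{\psdp}=\tau_k^{\psdp}$. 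On the dual side, in \eqref{eq::cdsdpSOS} both the objective $\mH_{\bss}(f)=\bc^{\intercal}\bss_\be$ and the constraint $\mH_{\bss}(P(\by,\bx))=\sum_i P_i(\bx)\mH_{\bss}(y_i)-P_0(\bx)\in\QM^m_k(G)$ depend on $\bss$ only through the first-order moments $\bss_\be$. Since every $\by\in\RR^{\ell}$ is realized as $\bss_\be$ by the moment sequence of the Dirac measure $\delta_{\by}$ (which has a rank-one PSD moment matrix and hence lies in $\ms_{\lceil k_{\by}/2\rceil}(\Theta)$ with $\Theta=\emptyset$), the change of variables $\by=\bss_\be$ identifies \eqref{eq::cdsdpSOS} with \eqref{eq::lcdsdp}, so $f_k^{\dsdp}=\tau_k^{\dsdp}$ and the minimizer $\by^{(k,\star)}$ corresponds to $\bss^{(k,\star)}_\be$. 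As $f^{\star}=\tau^{\star}$ trivially, part (i) follows from Theorem~\ref{th::asym_convergence_SOS}(i), and part (ii) follows from Theorem~\ref{th::asym_convergence_SOS}(ii) by reading $\lim_i\bss^{(k_i,\star)}_\be=\lim_i\by^{(k_i,\star)}$ as a global minimizer of \eqref{eq::RCPSO}, i.e.\ of \eqref{eq::LSIMP}.

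The computations above are routine, so the only point that demands care is establishing that the affine structure collapses the SOS constraint of \eqref{eq::cpsdpSOS} to the linear equalities of \eqref{eq::lcpsdp}, and, dually, that nothing beyond the first-order moments $\bss_\be$ enters \eqref{eq::cdsdpSOS}; once these identifications are pinned down the theorem is a genuine corollary. I expect the mildest subtlety to be the standing nonemptiness of the optimal solution set of \eqref{eq::RCPSO} tacitly used by Theorem~\ref{th::asym_convergence_SOS} through Proposition~\ref{prop::sduality}: in the linear case this is either part of the setup or recovered a posteriori from part (ii), where the limit of a convergent subsequence exhibits an actual optimizer of \eqref{eq::LSIMP}.
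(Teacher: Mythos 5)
Your proposal is correct and follows essentially the same route as the paper, which simply states that Theorem~\ref{th::sym_conv_L} is a direct consequence of Theorem~\ref{th::asym_convergence_SOS} after noting that Assumption~\ref{assump1} holds automatically for \eqref{eq::LSIMP} when $\X$ is compact; your verification that the affine structure collapses \eqref{eq::cpsdpSOS} to the linear equalities of \eqref{eq::lcpsdp} mirrors the paper's own derivation of \eqref{eq::lcp2} from \eqref{eq::lcp}, and the compactness of $\X$ from Assumption~\ref{assump2} is exactly the right observation. The details you supply (Dirac realizability of $\bss_\be$, dependence of \eqref{eq::cdsdpSOS} only on first-order moments) are the content the paper leaves implicit, and they are all sound.
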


%
\begin{theorem}\label{th::finite_convergence_linear}
Let $\mS^{(k,\star)}$ and $\by^{(k,\star)}$ be optimal solutions to \eqref{eq::lcpsdp} and \eqref{eq::lcdsdp}, respectively, for some $k\ge \lceil k_{\bx}/2\rceil$. 
If the FEC \eqref{eq::FEC_SOS_convex} holds, then 
\begin{enumerate}
\item[\upshape (i)] $\mS^{(k,\star)}$ admits a 
representing measure $\Phi^{\star}=\sum_{i=1}^{r}
W_i\delta_{\bx^{(i)}}\in\mathfrak{M}^m_+(\X)$
for some points $\bx^{(1)}, \ldots, \bx^{(r)}\in\X$ and $W_1, \ldots, W_{r}\in\bS_+^m$;
\item[\upshape (ii)]$\tau^{\psdp}_k\le \tau^{\star}$ and the equality holds if $\X$ is compact and Assumption \ref{assump3} 
is satisfied.
\end{enumerate}
Consequently, under the condition \eqref{eq::FEC_SOS_convex}, the compactness of $\X$ and 
Assumption \ref{assump3}, if $\tau^{\psdp}_k=\tau^{\dsdp}_k$, then
$\tau^{\psdp}_k=\tau^{\dsdp}_k=\tau^{\star}$ and 
\begin{enumerate}
\item[\upshape (iii)] $\by^{(k,\star)}$ is an optimal solution to  \eqref{eq::LSIMP};
\item[\upshape (iv)] For any decomposition 
$W_i=\sum_{l=1}^{m_i} \bv^{(i,l)}(\bv^{(i,l)})^{\intercal}$, $\bv^{(i,l)}\in\RR^m$, 
$i=1,\ldots, r$, it holds that
\[
\left(\sum_{i=1}^{\ell}P_i(\bx^{(i)})y_i^{(k,\star)}-P_0(\bx^{(i)})\right)\bv^{(i,l)}=0, 
\quad l=1,\ldots,m_i,\ i=1,\ldots,r.
\]
\end{enumerate}
\end{theorem}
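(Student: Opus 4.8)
The plan is to obtain Theorem~\ref{th::finite_convergence_linear} as a specialization of Theorem~\ref{th::finite_convergence_SOS}, so the real content is to verify that \eqref{eq::LSIMP} is an instance of \eqref{eq::RCPSO} to which the latter applies and that the two relaxation hierarchies coincide. I would first record that $\bc^\intercal\by$, being affine, is SOS-convex, and that $P(\by,\bx)=\sum_{i=1}^{\ell}P_i(\bx)y_i-P_0(\bx)$, being affine in $\by$, satisfies $\nabla_{\by\by}(\bv^\intercal P(\by,\bx)\bv)=0$ for every $\bv$, so that $-P(\by,\bx)$ is trivially PSD-SOS-convex; since $\Y=\RR^{\ell}$ there are no $\theta_i$ to consider. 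Hence Assumption~\ref{assump1} holds automatically once $\X$ is compact, and the FEC \eqref{eq::FEC_SOS_convex} is imposed on the same primal variable $\mS^{(k,\star)}$ in both formulations.

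The one point that genuinely needs care is the identification of \eqref{eq::lcpsdp}--\eqref{eq::lcdsdp} with \eqref{eq::cpsdpSOS}--\eqref{eq::cdsdpSOS}. On the primal side, taking $\Theta=\emptyset$ makes $\QM_{\lceil k_{\by}/2\rceil}(\Theta)$ the cone of SOS polynomials, and since $f(\by)-\rho-\mL_{\mS}(P(\by,\bx))=\sum_{j}(c_j-\mL_{\mS}(P_j(\bx)))y_j+(\mL_{\mS}(P_0(\bx))-\rho)$ is affine in $\by$, its membership there forces $\mL_{\mS}(P_j(\bx))=c_j$ for $j=1,\ldots,\ell$ and $\rho\le\mL_{\mS}(P_0(\bx))$; maximizing $\rho$ recovers \eqref{eq::lcpsdp}. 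On the dual side, $\mH_{\bss}(P(\by,\bx))=\sum_{i}P_i(\bx)s_{\be_i}-P_0(\bx)$ and $\mH_{\bss}(f)=\bc^\intercal\bss_{\be}$ depend on $\bss$ only through its first moments $\bss_{\be}$, while the remaining moments enter solely via $M_{\lceil k_{\by}/2\rceil}(\bss)\succeq0$; as any prescribed $\bss_{\be}$ extends to a feasible $\bss$ (e.g.\ through the moments of $\delta_{\bss_{\be}}$), optimizing over $\bss$ collapses to optimizing over $\by=\bss_{\be}\in\RR^{\ell}$, which is exactly \eqref{eq::lcdsdp}. This bookkeeping of $\bss_{\be}\leftrightarrow\by$ is the main obstacle; everything downstream is then routine.

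With the hierarchies matched, (i) is immediate from Theorem~\ref{th::FEC} and uses only the FEC. For the inequality in (ii) I would combine the representing measure $\Phi^{\star}=\sum_iW_i\delta_{\bx^{(i)}}$ with primal feasibility to get $\bc^\intercal\by-\tau^{\psdp}_k-\sum_i\tr{W_iP(\by,\bx^{(i)})}\ge0$ for all $\by$; evaluating at any feasible point of \eqref{eq::LSIMP} and using $\bx^{(i)}\in\X$, $W_i\succeq0$ yields $\tr{W_iP(\by,\bx^{(i)})}\ge0$, hence $\tau^{\psdp}_k\le\bc^\intercal\by$ and so $\tau^{\psdp}_k\le\tau^{\star}$ with no further hypotheses. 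The reverse inequality, and thus equality, then follows from the upper-bound property in Proposition~\ref{prop::lagSOS} once Assumption~\ref{assump1} is in force, i.e.\ once $\X$ is compact, together with the Slater condition in Assumption~\ref{assump3}.

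Parts (iii)--(iv) I expect to be even cleaner than their general counterparts. Any feasible $\by^{(k,\star)}$ of \eqref{eq::lcdsdp} satisfies $\sum_iP_i(\bx)y_i^{(k,\star)}-P_0(\bx)\in\QM^m_k(G)\subseteq\mathcal{P}^m(\X)$, so it is already \eqref{eq::LSIMP}-feasible with value $\bc^\intercal\by^{(k,\star)}=\tau^{\dsdp}_k$; combined with the no-gap hypothesis $\tau^{\psdp}_k=\tau^{\dsdp}_k$ and the equality $\tau^{\psdp}_k=\tau^{\star}$ from (ii), this gives (iii). For (iv) I would use $c_j=\sum_i\tr{W_iP_j(\bx^{(i)})}$ and $\tau^{\psdp}_k=\sum_i\tr{W_iP_0(\bx^{(i)})}$ from the representing measure to compute $\sum_i\tr{W_iP(\by^{(k,\star)},\bx^{(i)})}=\bc^\intercal\by^{(k,\star)}-\tau^{\psdp}_k=0$; since each $P(\by^{(k,\star)},\bx^{(i)})\succeq0$ and $W_i\succeq0$, every summand $(\bv^{(i,l)})^\intercal P(\by^{(k,\star)},\bx^{(i)})\bv^{(i,l)}$ vanishes, forcing $P(\by^{(k,\star)},\bx^{(i)})\bv^{(i,l)}=0$, which is the asserted active-constraint identity.
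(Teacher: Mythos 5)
Your proposal is correct and follows the same route as the paper, which obtains this theorem as a direct consequence of Theorem~\ref{th::finite_convergence_SOS} after noting that Assumption~\ref{assump1} holds automatically for \eqref{eq::LSIMP} when $\X$ is compact (affine data are trivially SOS-convex/PSD-SOS-convex). Your explicit verification that \eqref{eq::lcpsdp}--\eqref{eq::lcdsdp} coincide with \eqref{eq::cpsdpSOS}--\eqref{eq::cdsdpSOS} via the reduction $\bss\leftrightarrow\bss_{\be}$, and the direct arguments for (ii)--(iv), simply spell out details the paper leaves implicit.
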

\begin{remark}
Note that, unlike \eqref{eq::cpsdpSOS}, there is no quadratic module constraint in \eqref{eq::lcpsdp}. So, 
if $\X$ has non-empty interior, then \eqref{eq::lcpsdp} is strictly feasible by Lemma \ref{lem::S0} 
and hence there is no duality gap between \eqref{eq::lcpsdp} and \eqref{eq::lcdsdp}. Besides, we can recover the moment relaxations for deterministic PMI optimization problems by Henrion and Lasserre \cite{HL2006} from \eqref{eq::lcpsdp}. 
\end{remark}

\subsubsection{An application: minimizing the smallest eigenvalue of a polynomial matrix}\label{sec:app}
Consider the problem of minimizing the smallest eigenvalue of a polynomial matrix:
\begin{equation}\label{eq::empo}
\lambda^{\star}\coloneqq\inf_{\bx\in\RR^n}\,\lambda_{\min\,}(F(\bx))\quad
\text{s.t.}\quad\bx\in\X\coloneqq\{\bx\in\RR^n \mid G(\bx)\succeq 0\},
\end{equation}
where $F(\bx)\in\bS[\bx]^{m}$, $\lambda_{\min\,}(F(\bx))$ denotes the 
smallest eigenvalue of $F(\bx)$  and $G(\bx)\in\bS[\bx]^{q}$. 
The motivations for studying this problem come from many different fields. 
For example, in the global optimization method $\ba$BB for general
constrained nonconvex problems, a convex relaxation of the original nonconvex problem
is constructed. To underestimate nonconvex terms of generic structure in the involved nonconvex functions, one needs to compute a parameter $\ba$ which amounts to minimizing the 
smallest eigenvalue of the corresponding Hessian matrix over a product of intervals. If the 
involved functions are polynomials, then the problem can be formulated as \eqref{eq::empo} 
\cite{AMF1995,MF1994}. For another example from optimal control, the stabilisability 
radius of a continuous-time system described by a state-space equation is defined as
the norm of the smallest perturbation that makes the system unstabilisable. 
To compute such a radius, one needs to minimize the  smallest eigenvalue of a bivariate
polynomial matrix over a half disc on a plane, which is a special case of the 
problem \eqref{eq::empo} \cite{DSS2009}.

Clearly, the problem \eqref{eq::empo} is equivalent to 
\begin{equation}\label{eq::empo2}
\sup_{\lambda\in\RR}\,\lambda\quad \text{s.t.}\quad F(\bx)-\lambda I_{m}\succeq 0,\quad \forall\bx\in\X,
\end{equation}
which is a special case of \eqref{eq::LSIMP}. 

For each $k\ge\max\,\{\lceil \deg(F)/2\rceil, \lceil\deg(G)/2\rceil\}$, the $k$-th order primal moment relaxation of \eqref{eq::empo} reads as
\begin{equation}\label{eq::eigpsdp}
     \lambda^{\psdp}_k\coloneqq\inf_{\mS\in\ms^m_k(G)}\,  
     \mL_{\mS}(F(\bx))\quad \text{s.t.}\quad\mL_{\mS}(I_m)=1,
\end{equation}
with dual
\begin{equation}\label{eq::eigdsdp}
\lambda^{\dsdp}_k\coloneqq\sup_{\lambda\in\RR}\,\lambda\quad \text{s.t.}\quad F(\bx)-\lambda I_{m}\in\QM^m_k(G).
\end{equation}
The dual problem \eqref{eq::eigdsdp} was studied in \cite{SL2023} where $\X$ is set to the $n$-dimensional boolean hypercube $\{0, 1\}^n$.

Clearly, the Slater condition holds for \eqref{eq::empo2} whenever $\X$ is compact.
Then, from Theorems \ref{th::sym_conv_L}--\ref{th::finite_convergence_linear},
we deduce the following theorems.

\begin{theorem}\label{th::asym_con_eig}
Let Assumption \ref{assump2} hold.
Then $\lambda^{\psdp}_k\nearrow\lambda^{\star}$ and $\lambda^{\dsdp}_k\nearrow\lambda^{\star}$ as $k\to\infty$.
\end{theorem}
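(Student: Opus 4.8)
The plan is to derive the statement entirely from the already-established convergence result for the linear case, Theorem~\ref{th::sym_conv_L}, by recognizing \eqref{eq::empo2} as a concrete instance of \eqref{eq::LSIMP} and then tracking the sign that this reparametrization introduces. First I would verify that the hypotheses of Theorem~\ref{th::sym_conv_L}, namely Assumptions~\ref{assump2}--\ref{assump3}, are met. Assumption~\ref{assump2} already forces $\X$ to be compact: since $\Sigma(\bx)$ is an SOS matrix it is PSD everywhere, so on $\X$ (where $G(\bx)\succeq0$) we have $\langle\Sigma(\bx),G(\bx)\rangle\ge0$; hence nonnegativity of the SOS polynomial $r^2-\Vert\bx\Vert^2-\langle\Sigma(\bx),G(\bx)\rangle$ gives $\Vert\bx\Vert^2\le r^2$ on $\X$, and as $\X$ is closed it is compact. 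Compactness of $\X$ makes Assumption~\ref{assump1} automatic for the linear problem (as already noted in the text), and, because $\lambda_{\min}(F(\bx))$ is continuous and attains its minimum $\lambda^{\star}$ on the compact set $\X$, any $\bar\lambda<\lambda^{\star}$ yields $F(\bx)-\bar\lambda I_m\succ0$ for all $\bx\in\X$, so the Slater condition of Assumption~\ref{assump3} holds for \eqref{eq::empo2}.

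Next I would set up the explicit identification of \eqref{eq::empo2} with \eqref{eq::LSIMP}: take $\ell=1$, $y_1=\lambda$, $\bc=[-1]$, $P_1(\bx)=-I_m$ and $P_0(\bx)=-F(\bx)$, so that the PMI constraint $P_1(\bx)y_1-P_0(\bx)=F(\bx)-\lambda I_m\succeq0$ matches and the objective $\bc^{\intercal}\by=-\lambda$ turns the maximization of $\lambda$ into a minimization. At the level of optimal values this gives $\tau^{\star}=\inf(-\lambda)=-\lambda^{\star}$.

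Then I would substitute these data into the moment relaxation \eqref{eq::lcpsdp} and its dual \eqref{eq::lcdsdp}, and compare with \eqref{eq::eigpsdp}--\eqref{eq::eigdsdp}. The equality constraint $\mL_{\mS}(P_1)=c_1$ becomes $\mL_{\mS}(-I_m)=-1$, i.e.\ $\mL_{\mS}(I_m)=1$, while the objective $\mL_{\mS}(P_0)=-\mL_{\mS}(F)$ converts the $\sup$ in \eqref{eq::lcpsdp} into the $\inf$ in \eqref{eq::eigpsdp} after a sign flip; the same bookkeeping on the dual converts \eqref{eq::lcdsdp} into \eqref{eq::eigdsdp}. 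Consequently, for every $k$,
\[
\tau^{\psdp}_k=-\lambda^{\psdp}_k\quad\text{and}\quad\tau^{\dsdp}_k=-\lambda^{\dsdp}_k.
\]

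Finally I would invoke Theorem~\ref{th::sym_conv_L}(i), whose hypotheses were checked in the first step, to obtain $\tau^{\psdp}_k\searrow\tau^{\star}$ and $\tau^{\dsdp}_k\searrow\tau^{\star}$ as $k\to\infty$. Negating and using $\lambda^{\star}=-\tau^{\star}$ together with the two displayed identities turns monotone non-increasing convergence into the claimed monotone non-decreasing convergence $\lambda^{\psdp}_k\nearrow\lambda^{\star}$ and $\lambda^{\dsdp}_k\nearrow\lambda^{\star}$. The only place requiring genuine care—and the one spot where a sign error could hide—is the relaxation-level bookkeeping in the third step, i.e.\ confirming that the primal and dual relaxations of the eigenvalue problem coincide, up to the overall sign, with those of \eqref{eq::LSIMP} under the chosen reparametrization; once this matching is secured the conclusion is an immediate transfer of an already-proved theorem.
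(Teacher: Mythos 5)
Your proposal is correct and follows exactly the paper's route: the paper likewise derives this theorem by noting that Assumption \ref{assump2} forces $\X$ to be compact, that the Slater condition then holds for \eqref{eq::empo2}, and that the statement is a direct consequence of Theorem \ref{th::sym_conv_L} applied to \eqref{eq::empo2} viewed as an instance of \eqref{eq::LSIMP}. Your explicit sign bookkeeping ($\tau^{\psdp}_k=-\lambda^{\psdp}_k$, $\tau^{\dsdp}_k=-\lambda^{\dsdp}_k$, $\tau^{\star}=-\lambda^{\star}$) simply fills in details the paper leaves implicit.
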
   
\begin{theorem}\label{th::finite_con_eig}
Let $k\ge\max\,\{\lceil \deg(F)/2\rceil, \lceil\deg(G)/2\rceil\}$ and $\mS^{\star}_k$ be an optimal solution to \eqref{eq::eigpsdp}. 
If the FEC \eqref{eq::FEC_SOS_convex} holds, then 
\begin{enumerate}
\item[\upshape (i)] $\mS^{\star}_k$ admits a 
representing measure $\Phi^{\star}=\sum_{i=1}^{r}
W_i\delta_{\bx^{(i)}}\in\mathfrak{M}^m_+(\X)$
for some points $\bx^{(1)}, \ldots, \bx^{(r)}\in\X$ and 
$W_1, \ldots, W_{r}\in\bS_+^m$ with $\sum_{i=1}^r \tr{W_i}=1$ (because $\mL_{\mS}(I_m)=1$);
\item[\upshape (ii)] $\lambda^{\psdp}_k=\lambda^{\star}$;
\item[\upshape (iii)] For any decomposition 
$W_i=\sum_{l=1}^{m_i} \bv^{(i,l)}(\bv^{(i,l)})^{\intercal}$, $\bv^{(i,l)}\in\RR^m$, 
$i=1,\ldots, r$, it holds 
\[
F(\bx^{(i)}) \bv^{(i,l)}=\lambda^{\star}\bv^{(i,l)}, 
\quad l=1,\ldots,m_i,i=1,\ldots,r.
\]
That is, the smallest eigenvalue $\lambda^{\star}$ of $F(\bx)$ over $\X$
is attained at $\bx^{(i)}$ with $\bv^{(i,l)}$ being the corresponding eigenvectors.
\end{enumerate}
\end{theorem}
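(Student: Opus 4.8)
The plan is to read \eqref{eq::empo2} as the special case of \eqref{eq::LSIMP} obtained by setting $\ell=1$, $\bc=-1$, $P_1(\bx)=-I_m$ and $P_0(\bx)=-F(\bx)$, under the identification $\tau=-\lambda$. Under this dictionary one checks directly that the moment/SOS pair \eqref{eq::eigpsdp}--\eqref{eq::eigdsdp} is exactly \eqref{eq::lcpsdp}--\eqref{eq::lcdsdp}, so that $\tau^{\psdp}_k=-\lambda^{\psdp}_k$ and the equality constraint $\mL_{\mS}(P_1)=c_1$ becomes the normalization $\mL_{\mS}(I_m)=1$. Conclusion (i) then follows from the hypothesis that the FEC \eqref{eq::FEC_SOS_convex} holds at $\mS^{\star}_k$: by Theorem \ref{th::FEC} (equivalently Theorem \ref{th::finite_convergence_linear}(i)), $\mS^{\star}_k$ admits a finitely atomic representing measure $\Phi^{\star}=\sum_{i=1}^r W_i\delta_{\bx^{(i)}}$ with $\bx^{(i)}\in\X$ and $W_i\in\bS_+^m$. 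Evaluating the Riesz functional at the constant matrix $I_m$ gives $1=\mL_{\mS^{\star}_k}(I_m)=\int_{\X}\tr{I_m\,\ud\Phi^{\star}}=\sum_{i=1}^r\tr{W_i}$, which is the stated normalization.

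For (ii) I would establish $\lambda^{\psdp}_k=\lambda^{\star}$ by a two-sided bound rather than by invoking the ``consequently'' part of Theorem \ref{th::finite_convergence_linear}, since this keeps the argument self-contained and avoids the no-duality-gap hypothesis. The easy (relaxation) direction is $\lambda^{\psdp}_k\le\lambda^{\star}$: for any $\bx_0\in\X$ with unit minimal eigenvector $\bv_0$, the atomic measure $\bv_0\bv_0^{\intercal}\delta_{\bx_0}$ has its moments in $\ms^m_k(G)$ by Corollary \ref{cor::nece}, is feasible for \eqref{eq::eigpsdp}, and attains objective value $\bv_0^{\intercal}F(\bx_0)\bv_0=\lambda_{\min}(F(\bx_0))$; taking the infimum over $\bx_0\in\X$ yields the bound. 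For the reverse direction I use the measure from (i): since $\bx^{(i)}\in\X$ and $\X$ is compact (the standing hypothesis, so $\lambda^{\star}$ is finite and $\lambda^{\star}=\inf_{\bx\in\X}\lambda_{\min}(F(\bx))$), we have $F(\bx^{(i)})\succeq\lambda^{\star}I_m$; writing $W_i=\sum_l\bv^{(i,l)}(\bv^{(i,l)})^{\intercal}$,
\[
\lambda^{\psdp}_k=\mL_{\mS^{\star}_k}(F)=\sum_{i,l}(\bv^{(i,l)})^{\intercal}F(\bx^{(i)})\bv^{(i,l)}\ge \lambda^{\star}\sum_{i,l}\Vert\bv^{(i,l)}\Vert^2=\lambda^{\star},
\]
where the last equality uses $\sum_{i,l}\Vert\bv^{(i,l)}\Vert^2=\sum_i\tr{W_i}=1$ from (i). The two bounds together give $\lambda^{\psdp}_k=\lambda^{\star}$.

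Part (iii) then drops out of the tightness of the above display: equality forces $(\bv^{(i,l)})^{\intercal}(F(\bx^{(i)})-\lambda^{\star}I_m)\bv^{(i,l)}=0$ for every $i,l$, and since $F(\bx^{(i)})-\lambda^{\star}I_m\succeq0$, a PSD quadratic form that annihilates $\bv^{(i,l)}$ annihilates it as a matrix product, so $(F(\bx^{(i)})-\lambda^{\star}I_m)\bv^{(i,l)}=0$, i.e.\ $F(\bx^{(i)})\bv^{(i,l)}=\lambda^{\star}\bv^{(i,l)}$; combined with $F(\bx^{(i)})\succeq\lambda^{\star}I_m$ this exhibits $\lambda^{\star}$ as the smallest eigenvalue of $F(\bx^{(i)})$, attained at $\bx^{(i)}$ with eigenvector $\bv^{(i,l)}$. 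I expect the main subtlety to lie not in any single computation but in marshalling the inequality $F(\bx^{(i)})\succeq\lambda^{\star}I_m$ correctly: it is used twice (to lower-bound each Rayleigh quotient and then to pass from a vanishing scalar form to a vanishing matrix--vector product), and its legitimacy rests entirely on the membership $\bx^{(i)}\in\X$, which is precisely what the $\X$-version of the flat extension (Theorem \ref{th::FEC}), as opposed to the plain flat extension, guarantees.
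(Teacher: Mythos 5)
Your proof is correct, and it takes a genuinely more self-contained route than the paper. The paper obtains this theorem purely by specialization: it observes that the Slater condition for \eqref{eq::empo2} is automatic once $\X$ is compact and then deduces the statement from Theorems \ref{th::sym_conv_L}--\ref{th::finite_convergence_linear}, whose proofs run through the conic reformulation, Proposition \ref{prop::lagSOS}, and the duality between \eqref{eq::lcpsdp} and \eqref{eq::lcdsdp}. You instead argue entirely on the primal moment side: the inequality $\lambda^{\psdp}_k\le\lambda^{\star}$ by exhibiting the feasible rank-one atomic measures $\bv_0\bv_0^{\intercal}\delta_{\bx_0}$ (via Corollary \ref{cor::nece}), and the reverse inequality together with the eigenvector identity by combining the representing measure from Theorem \ref{th::FEC} with $F(\bx^{(i)})\succeq\lambda^{\star}I_m$ and the standard fact that a PSD matrix whose quadratic form vanishes at a vector annihilates that vector. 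This buys something concrete: the ``consequently'' parts of Theorem \ref{th::finite_convergence_linear} are stated under the extra hypothesis $\tau^{\psdp}_k=\tau^{\dsdp}_k$ (guaranteed, per the paper's remark, when $\X$ has nonempty interior) and yield the eigenvector identity with the dual optimal solution rather than with $\lambda^{\star}$; your argument delivers conclusion (iii) exactly as stated, with $\lambda^{\star}$, and with no duality-gap assumption. You also correctly isolate the two points on which everything hinges: the membership $\bx^{(i)}\in\X$ supplied by the $\X$-version of the flat extension (Theorem \ref{th::FEC} rather than Theorem \ref{th::FE}), and the compactness of $\X$ (in force throughout this subsection via Assumption \ref{assump2}), which makes $\lambda^{\star}$ finite and $F(\bx)\succeq\lambda^{\star}I_m$ valid on all of $\X$.
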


\begin{example}
Consider the following instance of \eqref{eq::empo} where $F(\bx)=Q\diag(f_1, f_2, f_3)Q^\intercal$ for some $f_1, f_2, f_3\in\RR[\bx]$ and $Q\in\RR^{3\times 3}$ with $Q^{\intercal}Q=I_3$. Specifically, we let $Q$ and 
$G(\bx)$ be 
\[
Q=
\left[
\begin{array}{rrr}
   \frac{1}{\sqrt{2}}  & -\frac{1}{\sqrt{3}} &  \frac{1}{\sqrt{6}}\\
   0  &  \frac{1}{\sqrt{3}}  & \frac{2}{\sqrt{6}}\\
   \frac{1}{\sqrt{2}}  & \frac{1}{\sqrt{3}}  &  -\frac{1}{\sqrt{6}}
\end{array}
\right],
\quad
 G(\bx)=
 \left[
 \begin{array}{cc}
    1-4x_1x_2 & x_1 \\
    x_1 & 4-x_1^2-x_2^2 \\
 \end{array}
 \right],
\]  
and 
\[
f_1(\bx)=-x_1^2-x_2^2,\ 
f_2(\bx)=-\frac{1}{4}(x_1+1)^2-\frac{1}{4}(x_2-1)^2,\
f_3(\bx)=-\frac{1}{4}(x_1-1)^2-\frac{1}{4}(x_2+1)^2.
\]
It is easy to check that 
$\lambda^{\star}=\inf_{\bx\in\X}\{f_1(\bx), f_2(\bx), f_3(\bx)\}=-4$ which
is achieved by $f_1$ at $(0, \pm 2)$. The eigenvector space of 
$F(0,\pm 2)$ associated with the eigenvalue $-4$ consists of all vectors of the form
$[c, 0, c]^{\intercal}$, $c\in\RR$.

Solving the moment relaxation \eqref{eq::eigpsdp} with $k=2$, we get $\lambda^{\psdp}_2=-4.0000$ and 
the rank condition \eqref{eq::FEC_SOS_convex} is satisfied with 
$\rank(M_2(\mS^{\star}_2))=\rank(M_1(\mS^{\star}_2))=2$.
By Theorem \ref{th::finite_con_eig}, global optimality is reached. 
Using the procedure described in Section \ref{sec::recover}, we recover the representing
measure $\Phi^{\star}=\sum_{i=1}^{2} W_i\delta_{\bx^{(i)}}$ of $\mS^{\star}_2$ with
    \[
    \bx^{(1)}= (-0.0000, -2.0000),
\quad
\bx^{(2)}= (0.0000, 2.0000),
    \]
and
\[
W_1= \left[
\begin{array}{rrr}
    0.2505 &  -0.0000 &   0.2505\\  
   -0.0000 &  -0.0000 &   0.0000\\
    0.2505 &   0.0000 &   0.2505\\
\end{array}
\right],\ 
W_2= \left[
\begin{array}{rrr}
    0.2495 &   0.0000 &   0.2495\\  
    0.0000 &   0.0000 &  -0.0000\\
    0.2495 &  -0.0000 &   0.2495\\
\end{array}
\right].
\]
Both $W_1$ and $W_2$ have the decomposition:
$\gamma_i^2[1,\ 0,\ 1]^\intercal[1,\ 0,\ 1]$ for some constants $\gamma_1, \gamma_2\in\RR$.
Then, by Theorem \ref{th::finite_con_eig}, $[1,\ 0,\ 1]^\intercal$
is an eigenvector of $F(\bx^{(i)})$, $i=1, 2$, associated with the eigenvalue $-4.0000$.

\qed
\end{example}

\section{Extensions to the general convex and non-convex settings}\label{sec::extension}
In this section, we extend the results of Section \ref{sec::CRSDP} to the general convexity and non-convexity settings.
The following assumptions are obtained from Assumption \ref{assump1} by replacing ``SOS-convex" with ``convex".
\begin{assumption}\label{assump5}
{\rm
(i) $f(\by),-\theta_1(\by),\ldots,-\theta_s(\by)$ are convex;
(ii) $-P(\by,\bx)$ is PSD-convex in $\by$ for all $\bx\in\X$;
(iii) $\X$ is compact.
}
\end{assumption}

For each $k\ge\max\,\{\lceil k_{\bx}/2\rceil,\lceil k_{\by}/2\rceil\}$, by replacing the cones $\mathcal{P}(\Y)$ and $\mathfrak{M}^m_+(\X)$ in \eqref{eq::cp} 
with $\QM_k(\Theta)$ and $\ms^m_k(G)$ respectively, we obtain the following SDP:
\begin{equation}\label{eq::cpsdp}
\left\{
\begin{aligned}
     f^{\psdp}_k\coloneqq\sup_{\rho, \mS}&\  \rho\\
      \text{s.t.}&\ f(\by)-\rho-\mL_{\mS}(P(\by, \bx))\in \QM_k(\Theta),\\
      &\ \rho\in\RR,\ \mS\in\ms^m_k(G),
   \end{aligned} 
   \right.
\end{equation}
whose dual reads as
\begin{equation}\label{eq::cdsdp}
\left\{
   \begin{aligned}
     f^{\dsdp}_k\coloneqq\inf_{\bss}\,&\  \mH_{\bss}(f)\\
      \text{s.t.}&\ \bss\in\ms_k(\Theta),\ \mH_{\bss}(1)=1,\\
      &\ \mH_{\bss}(P(\by, \bx))\in\QM^m_k(G).
   \end{aligned} 
   \right.
\end{equation}



\begin{lemma}[Jensen's inequality for PSD-convexity]\label{lem::Jensen}
If a polynomial matrix $Q(\by)\in\bS[\by]^{m}$ is PSD-convex 
in $\by$ and a sequence $\bss=(s_{\ba})_{\ba\in\N^{\ell}}\subseteq\RR$ 
admits a representing probability measure, then 
$\mH_{\bss}(Q(\by)) \succeq Q(\bss_{\be})$. 
\end{lemma}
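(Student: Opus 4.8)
The plan is to reduce the matrix inequality $\mH_{\bss}(Q(\by)) \succeq Q(\bss_{\be})$ to a family of scalar inequalities indexed by a test vector $\bv \in \RR^m$, and then to invoke the classical (scalar) Jensen inequality. Since both $\mH_{\bss}(Q(\by))$ and $Q(\bss_{\be})$ are symmetric $m\times m$ matrices, establishing $\bv^{\intercal}\mH_{\bss}(Q(\by))\bv \ge \bv^{\intercal}Q(\bss_{\be})\bv$ for every $\bv\in\RR^m$ is equivalent to the desired PSD ordering, so it suffices to work with this scalarized form.

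First I would fix an arbitrary $\bv\in\RR^m$ and set $g_{\bv}(\by)\coloneqq\bv^{\intercal}Q(\by)\bv\in\RR[\by]$. By the very definition of PSD-convexity, $g_{\bv}$ is a convex polynomial in $\by$. Writing $Q(\by)=\sum_{\bb}Q_{\bb}\by^{\bb}$, linearity of $\mH_{\bss}$ gives $\bv^{\intercal}\mH_{\bss}(Q(\by))\bv=\sum_{\bb}(\bv^{\intercal}Q_{\bb}\bv)\,s_{\bb}=\mH_{\bss}(g_{\bv})$, and trivially $\bv^{\intercal}Q(\bss_{\be})\bv=g_{\bv}(\bss_{\be})$. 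Thus the matrix claim reduces to the scalar claim $\mH_{\bss}(g_{\bv})\ge g_{\bv}(\bss_{\be})$ for the convex polynomial $g_{\bv}$.

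Next I would exploit the hypothesis that $\bss$ admits a representing probability measure $\mu$. Then $\mH_{\bss}(h)=\int h(\by)\,\ud\mu(\by)$ for every $h\in\RR[\by]$, and in particular $\bss_{\be}=(s_{\be_1},\ldots,s_{\be_{\ell}})=\int\by\,\ud\mu(\by)$ is the barycenter of $\mu$. Applying the classical Jensen inequality to the convex function $g_{\bv}$ and the probability measure $\mu$ yields $\int g_{\bv}(\by)\,\ud\mu(\by)\ge g_{\bv}\!\left(\int\by\,\ud\mu(\by)\right)$, i.e. $\mH_{\bss}(g_{\bv})\ge g_{\bv}(\bss_{\be})$. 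As $\bv$ was arbitrary, the PSD inequality $\mH_{\bss}(Q(\by))\succeq Q(\bss_{\be})$ follows.

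The point requiring care -- rather than a genuine obstacle -- is the interchange of the scalar reduction with the action of $\mH_{\bss}$ and the identification of $\bss_{\be}$ as the barycenter of $\mu$; both are immediate from linearity and the moment interpretation $s_{\ba}=\int\by^{\ba}\,\ud\mu$. Integrability of $g_{\bv}$ against $\mu$ is automatic, since $g_{\bv}$ is a polynomial and all moments of $\mu$ are finite (they equal the entries of $\bss$). Note the contrast with Proposition \ref{prop::js}: there PSD-SOS-convexity permits the same conclusion from merely a positive semidefinite moment matrix, whereas here the weaker convexity hypothesis is compensated by assuming a genuine representing probability measure, which is exactly what licenses the appeal to classical Jensen.
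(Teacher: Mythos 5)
Your proposal is correct and follows essentially the same route as the paper's own proof: scalarize with an arbitrary test vector $\bv$, note that $\bv^{\intercal}Q(\by)\bv$ is convex by PSD-convexity, apply the classical Jensen inequality to the representing probability measure, and conclude the PSD ordering since $\bv$ is arbitrary. The additional remarks on integrability and the barycenter identification are fine but not needed beyond what the paper states.
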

\begin{proof}
By the PSD-convexity of $Q(\by)$, for any $\bv\in\RR^m$, $\bv^{\intercal}Q(\by)\bv$ is convex in $\by$. 
Since $\bss$ admits a representing probability measure,
by Jensen's inequality for convex functions, it holds
$\bv^{\intercal}Q(\bss_\be)\bv\le \mH_{\bss}(\bv^{\intercal}Q(\by)\bv)
=\bv^{\intercal}\mH_{\bss}(Q(\by))\bv$
for any $\bv\in\RR^m$. Hence, $\mH_{\bss}(Q(\by))\succeq  Q(\bss_{\be}).$
\end{proof}

We have the following theorem whose proof can be found in Appendix \ref{app3}.
\begin{theorem}\label{th::asym_conv}
Under Assumptions \ref{assump2}, \ref{assump3}, \ref{assump4}(i) and \ref{assump5},
the following are true:
\begin{enumerate}
\item[\upshape (i)] $\lim_{k\rightarrow\infty}f^{\psdp}_k=\lim_{k\rightarrow\infty}f^{\dsdp}_k=f^{\star}$;
\item[\upshape (ii)] For any  convergent subsequence $(\bss^{(k_i,\star)}_\be)_i$ (always exists) of $(\bss^{(k,\star)}_\be)_k$ 
where each $\bss^{(k,\star)}$ is a minimizer of \eqref{eq::cdsdp}, $\lim_{i\rightarrow\infty}\bss^{(k_i,\star)}_\be$ 
is a global minimizer of \eqref{eq::RCPSO}. Consequently, if the optimal solution set of \eqref{eq::RCPSO} is a singleton, 
then $\lim_{k\rightarrow\infty}\bss^{(k,\star)}_\be$ is the unique global minimizer.
\end{enumerate}
\end{theorem}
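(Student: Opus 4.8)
The plan is to prove part (i) through two one-sided estimates, $\limsup_{k}f^{\dsdp}_k\le f^{\star}$ and $\liminf_{k}f^{\psdp}_k\ge f^{\star}$, and then to close the gap with weak duality $f^{\psdp}_k\le f^{\dsdp}_k$. Since the latter holds for every $k$, the two estimates force $\lim_k f^{\psdp}_k=\lim_k f^{\dsdp}_k=f^{\star}$. Each estimate is obtained by exhibiting, for each target accuracy, an explicit feasible point of \eqref{eq::cdsdp} (respectively \eqref{eq::cpsdp}) at all sufficiently large orders.

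For the dual upper bound I would follow the argument of Theorem \ref{th::asym_convergence_SOS}(i), with PSD-SOS-convexity replaced by PSD-convexity. Given $\varepsilon>0$, let $\by^{\star}$ be a minimizer of \eqref{eq::RCPSO} and $\bar{\by}$ the Slater point of Assumption \ref{assump3}. Convexity of $f$ and $\Y$ (Assumption \ref{assump5}(i)) lets me choose $t\in(0,1)$ close to $1$ with $\by'\coloneqq t\by^{\star}+(1-t)\bar{\by}\in\Y$ and $f(\by')\le f^{\star}+\varepsilon$, while PSD-convexity of $-P$ (Assumption \ref{assump5}(ii)) yields $P(\by',\bx)\succeq tP(\by^{\star},\bx)+(1-t)P(\bar{\by},\bx)\succ0$ on $\X$. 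Taking $\bss'$ to be the truncated moment sequence of $\delta_{\by'}$, one has $\bss'\in\ms_k(\Theta)$, $\mH_{\bss'}(1)=1$, and $\mH_{\bss'}(P(\by,\bx))=P(\by',\bx)$; as the latter is positive definite on $\X$, Assumption \ref{assump2} and Theorem \ref{th::psatz} place it in $\QM^m_k(G)$ for all $k\ge k^{(\varepsilon)}$. Thus $\bss'$ is feasible for \eqref{eq::cdsdp} and $f^{\dsdp}_k\le f(\by')\le f^{\star}+\varepsilon$ for such $k$, giving $\limsup_k f^{\dsdp}_k\le f^{\star}$.

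For the primal lower bound I would use the finitely atomic measure $\Phi^{\star}\in\mathfrak{M}^m_+(\X)$ of Proposition \ref{prop::sduality}, still available under Assumption \ref{assump5} by Remark \ref{rk::convex}, which satisfies $f(\by)-\mL_{\Phi^{\star}}(P(\by,\bx))\ge f^{\star}$ on $\Y$. Fix $\delta>0$; then $f(\by)-(f^{\star}-\delta)-\mL_{\Phi^{\star}}(P(\by,\bx))\ge\delta>0$ on $\Y$, and since Assumption \ref{assump4}(i) renders $\QM(\Theta)$ Archimedean, Putinar's Positivstellensatz (Remark \ref{rk::putinar}) places this polynomial in $\QM_k(\Theta)$ for all $k\ge k^{(\delta)}$. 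Letting $\mS^{\star}$ be the order-$k$ truncated moment sequence of $\Phi^{\star}$, Corollary \ref{cor::nece} gives $\mS^{\star}\in\ms^m_k(G)$, and as $\mL_{\mS^{\star}}(P)=\mL_{\Phi^{\star}}(P)$ the pair $(f^{\star}-\delta,\mS^{\star})$ is feasible for \eqref{eq::cpsdp}. Hence $f^{\psdp}_k\ge f^{\star}-\delta$ for $k\ge k^{(\delta)}$, so $\liminf_k f^{\psdp}_k\ge f^{\star}$, and together with the dual bound and weak duality this proves (i). Note that, in contrast to the SOS-convex case where Proposition \ref{prop::lagSOS} gives a degree-bounded certificate hence a fixed relaxation order, here Putinar's theorem only yields an order $k^{(\delta)}$ depending on $\delta$, so the convergence is merely asymptotic.

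For part (ii) I would first refine the given subsequence to one along which \emph{all} moments of the minimizers $\bss^{(k,\star)}$ converge: the ball constraint in $\Theta$ bounds the moments uniformly (Remark \ref{rk::boundSOS}), so such a refinement exists by a diagonal argument and keeps the same degree-one limit $\by^{\diamond}$. Writing $\bss^{\star}$ for the full limit, passing the conditions $M_d(\cdot)\succeq0$ and the localizing conditions to the limit gives $\bss^{\star}\in\ms(\Theta)$, whence the Archimedean property and Putinar's theorem furnish a representing probability measure $\mu^{\star}$ on $\Y$ with barycenter $\bss^{\star}_{\be}=\by^{\diamond}$. Passing the dual matrix constraint to the limit, and using $\QM^m_k(G)\subseteq\mathcal{P}^m(\X)$ together with the closedness of $\mathcal{P}^m(\X)$, yields $\mH_{\bss^{\star}}(P(\by,\bx))\succeq0$ on $\X$. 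Jensen's inequality (Lemma \ref{lem::Jensen} for the matrix part, and its scalar case for the $\theta_j$) then gives $P(\by^{\diamond},\bx)\succeq\mH_{\bss^{\star}}(P(\by,\bx))\succeq0$ on $\X$ and $\theta_j(\by^{\diamond})\ge\mH_{\bss^{\star}}(\theta_j)\ge0$, so $\by^{\diamond}$ is feasible for \eqref{eq::RCPSO}; convexity of $f$ and part (i) give $f(\by^{\diamond})\le\mH_{\bss^{\star}}(f)=\lim_i f^{\dsdp}_{k_i}=f^{\star}$, so $\by^{\diamond}$ is a global minimizer, and the singleton claim follows since every convergent subsequence then has the unique minimizer as limit. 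The step I expect to be the main obstacle is precisely this passage to a genuine measure: in the SOS-convex setting the extended Jensen inequality (Proposition \ref{prop::js}) applies to any sequence with a PSD moment matrix, so feasibility and optimality of $\bss^{(k,\star)}_{\be}$ can be read off at each finite order, whereas under mere convexity Lemma \ref{lem::Jensen} demands an actual representing probability measure that the truncated solutions need not possess; the remedy is to take the limit first, where the Archimedean ball condition simultaneously supplies the compactness to extract $\bss^{\star}$ and, via Putinar's theorem, the measure $\mu^{\star}$, and only then invoke Jensen.
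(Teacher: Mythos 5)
Your proposal is correct and follows essentially the same route as the paper's proof in Appendix C: part (i) via the same two one-sided estimates (the Slater-point perturbation for the dual bound and the atomic measure from Proposition \ref{prop::sduality} plus Putinar's Positivstellens\"atz for the primal bound, closed by weak duality), and part (ii) by extracting a pointwise-convergent full moment sequence, invoking Putinar/Haviland to obtain a genuine representing probability measure, and then applying Lemma \ref{lem::Jensen}. The only cosmetic difference is that you use a diagonal argument where the paper invokes Tychonoff's theorem to extract the convergent subsequence of completed moment vectors.
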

\begin{remark}\label{rk::nonmonotonical}
Since $\QM_k(\Theta)$ is an inner approximation for $\mathcal{P}(\Y)$ while $\ms^m_k(G)$ is an outer approximation for $\mathfrak{M}^m_+(\X)$, the convergence of $(f^{\psdp}_k)_k$ and $(f^{\dsdp}_k)_k$ to $f^{\star}$ might not be monotonical (cf. Proposition \ref{prop::lagSOS}).
\end{remark}

As in the SOS-convex setting, finite convergence of \eqref{eq::cpsdp}--\eqref{eq::cdsdp} can be detected via certain FECs. 
Let $d_{\Theta}\coloneqq\max\,\{\lceil\deg(\theta_i)/2\rceil, i=1,\ldots, s\}$.
\begin{theorem}\label{th::finite_convergence}
Let $(\rho^{\star}_k, \mS^{(k,\star)})$ and $\bss^{(k,\star)}$ be optimal solutions to \eqref{eq::cpsdp} and \eqref{eq::cdsdp}, respectively, for some $k\ge\max\,\{\lceil k_{\bx}/2\rceil, \lceil k_{\by}/2\rceil\}$. 
Consider the following FECs
\begin{align}
&\exists\lceil k_{\bx}/2\rceil\le t_1\le k \text{ s.t. }\rank(M_{t_1}(\mS^{(k,\star)}))=\rank(M_{t_1-d_G}(\mS^{(k,\star)})),\label{eq::FEC_convex1}\\
&\exists\lceil k_{\by}/2\rceil\le t_2\le k\text{ s.t. }\rank(M_{t_2}(\bss^{(k,\star)}))=\rank(M_{t_2-d_{\Theta}}(\bss^{(k,\star)})).
\label{eq::FEC_convex2}
\end{align}
If the condition \eqref{eq::FEC_convex1} holds, then 
 \begin{enumerate}
    \item[\upshape (i)] $\mS^{(k,\star)}$ admits a 
    representing measure $\Phi^{\star}=\sum_{i=1}^{r_1}
    W_i\delta_{\bx^{(i)}}\in\mathfrak{M}^m_+(\X)$
    for some points $\bx^{(1)}, \ldots, \bx^{(r_1)}\in\X$,
    $W_1, \ldots, W_{r_1}\in\bS_+^m$ and $f^{\psdp}_k\le f^{\star}$.
\end{enumerate}
Under Assumption \ref{assump5},  if the condition \eqref{eq::FEC_convex2} holds, then
\begin{enumerate}
\item[\upshape (ii)] $\bss^{(k,\star)}$ admits a 
representing probability measure $\mu^{\star}=\sum_{j=1}^{r_2}
\lambda_j\delta_{\by^{(j)}}\in\mathfrak{m}_+(\Y)$ for some $\by^{(1)}, \ldots,
\by^{(r_2)}\in\Y$ and positive real numbers $\lambda_1, \ldots, \lambda_{r_2}$,
and $f^{\dsdp}_k \ge f^{\star}$.
\end{enumerate}
Consequently, under the conditions \eqref{eq::FEC_convex1}, \eqref{eq::FEC_convex2} 
and Assumption \ref{assump5}, if $f^{\psdp}_k=f^{\dsdp}_k$, then 
\begin{enumerate}
\item[\upshape (iii)]  $f^{\psdp}_k=f^{\dsdp}_k=f^{\star}$;
\item[\upshape(iv)] $\sum_{j=1}^{r_2}\lambda_j\by^{(j)}$ is an optimal solution to  \eqref{eq::RCPSO};
\item[\upshape(v)] For any decomposition  $W_i=\sum_{l=1}^{m_i} \bv^{(i,l)}(\bv^{(i,l)})^{\intercal}$, 
$\bv^{(i,l)}\in\RR^m$, $i=1,\ldots, r_1$, it holds that
\[
P\left(\sum_{j=1}^{r_2}\lambda_j\by^{(j)}, \bx^{(i)}\right) \bv^{(i,l)}=0, 
\quad l=1,\ldots,m_i,i=1,\ldots,r_1.
\]
\end{enumerate}
\end{theorem}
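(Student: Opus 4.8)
The plan is to treat the primal and dual relaxations separately through their respective flat extension conditions and then combine them under the hypothesis $f^{\psdp}_k=f^{\dsdp}_k$. For part (i), the matrix-valued condition \eqref{eq::FEC_convex1} together with $\mS^{(k,\star)}\in\ms^m_k(G)$ is exactly the hypothesis of Theorem \ref{th::FEC}, so it yields an atomic representing measure $\Phi^{\star}=\sum_{i=1}^{r_1}W_i\delta_{\bx^{(i)}}$ with $W_i\in\bS_+^{m}$ and $\bx^{(i)}\in\X$, just as in Theorem \ref{th::finite_convergence_SOS}(i). For the bound $f^{\psdp}_k\le f^{\star}$ I would repeat the argument of Theorem \ref{th::finite_convergence_SOS}(ii): feasibility of $(f^{\psdp}_k,\mS^{(k,\star)})$ and the inclusion $\QM_k(\Theta)\subseteq\mathcal{P}(\Y)$ give $f(\by)-f^{\psdp}_k-\sum_{i=1}^{r_1}\tr{W_iP(\by,\bx^{(i)})}\ge0$ for all $\by\in\Y$; evaluating at a global minimizer $\by^{\star}$ and using $\tr{W_iP(\by^{\star},\bx^{(i)})}\ge0$ (since $W_i\succeq0$ and $P(\by^{\star},\bx^{(i)})\succeq0$) gives $f^{\psdp}_k\le f(\by^{\star})=f^{\star}$. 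This step needs only the inner-approximation property of $\QM_k(\Theta)$ and no convexity.

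For part (ii), the scalar condition \eqref{eq::FEC_convex2} is the $m=1$ instance of Theorem \ref{th::FEC} for $\Y$, so $\bss^{(k,\star)}$ admits an atomic representing measure $\mu^{\star}=\sum_{j=1}^{r_2}\lambda_j\delta_{\by^{(j)}}$ with $\by^{(j)}\in\Y$ and $\lambda_j>0$; the normalization $\mH_{\bss^{(k,\star)}}(1)=1$ makes it a probability measure, i.e. $\sum_j\lambda_j=1$. The key new ingredient is the barycenter $\hat{\by}\coloneqq\sum_{j=1}^{r_2}\lambda_j\by^{(j)}$, which lies in $\Y$ by convexity of $\Y$ (Assumption \ref{assump5}(i)). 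From feasibility, $\mH_{\bss^{(k,\star)}}(P(\by,\bx))=\sum_j\lambda_jP(\by^{(j)},\bx)\in\QM^m_k(G)\subseteq\mathcal{P}^m(\X)$; PSD-convexity of $-P(\,\cdot\,,\bx)$ (Assumption \ref{assump5}(ii)) then gives $P(\hat{\by},\bx)\succeq\sum_j\lambda_jP(\by^{(j)},\bx)\succeq0$ for all $\bx\in\X$, so $\hat{\by}$ is feasible to \eqref{eq::RCPSO}. Finally, Jensen's inequality for the convex $f$ gives $f(\hat{\by})\le\sum_j\lambda_jf(\by^{(j)})=\mH_{\bss^{(k,\star)}}(f)=f^{\dsdp}_k$, whence $f^{\dsdp}_k\ge f(\hat{\by})\ge f^{\star}$.

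For the consequences, combining $f^{\psdp}_k\le f^{\star}$ from (i), $f^{\dsdp}_k\ge f^{\star}$ from (ii) and the hypothesis $f^{\psdp}_k=f^{\dsdp}_k$ forces $f^{\psdp}_k=f^{\dsdp}_k=f^{\star}$, giving (iii). The inequality chain of (ii) then collapses to $f(\hat{\by})=f^{\star}$, so $\hat{\by}=\sum_j\lambda_j\by^{(j)}$ is optimal, which is (iv). For (v) I would return to the primal inequality $f(\by)-f^{\star}-\sum_i\tr{W_iP(\by,\bx^{(i)})}\ge0$ on $\Y$ and evaluate at $\by=\hat{\by}$; since $f(\hat{\by})=f^{\star}$ this forces $\sum_i\tr{W_iP(\hat{\by},\bx^{(i)})}\le0$, while each summand is nonnegative, so every $\tr{W_iP(\hat{\by},\bx^{(i)})}=0$. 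Writing $W_i=\sum_l\bv^{(i,l)}(\bv^{(i,l)})^{\intercal}$ and using $P(\hat{\by},\bx^{(i)})\succeq0$ gives $(\bv^{(i,l)})^{\intercal}P(\hat{\by},\bx^{(i)})\bv^{(i,l)}=0$ and hence $P(\hat{\by},\bx^{(i)})\bv^{(i,l)}=0$, which is (v).

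The main obstacle, relative to the SOS-convex Theorem \ref{th::finite_convergence_SOS}, is that one can no longer read off an optimal decision variable directly from the first-order moments through the extended Jensen inequality for SOS-convex polynomials (Proposition \ref{prop::js}), which is unavailable in the merely convex setting. Instead the dual optimizer must be atomized via the scalar FEC \eqref{eq::FEC_convex2} and recombined into the barycenter $\hat{\by}$, and it is precisely the feasibility of this barycenter that requires PSD-convexity of $-P$ rather than mere PSDness; establishing $P(\hat{\by},\bx)\succeq\sum_j\lambda_jP(\by^{(j)},\bx)$ and threading it through the two opposite one-sided bounds is the delicate part of the argument.
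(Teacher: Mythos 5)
Your proposal is correct and follows essentially the same route as the paper: Theorem \ref{th::FEC} (and its scalar $m=1$ instance) atomizes the two optimizers, the barycenter $\sum_j\lambda_j\by^{(j)}$ is shown feasible via PSD-convexity of $-P$ and Jensen's inequality, the two one-sided bounds collapse under $f^{\psdp}_k=f^{\dsdp}_k$, and (v) follows from the complementarity forced by the primal nonnegativity certificate evaluated at the barycenter. No gaps.
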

\begin{proof}
(i). See the proof of Theorem \ref{th::finite_convergence_SOS} (i).

(ii).  The first part follows from Theorem \ref{th::FEC}. 
For the second part, suppose that Assumption~\ref{assump5} holds.
As $\bss^{(k,\star)}$ is feasible to \eqref{eq::cdsdp}, 
by Lemma \ref{lem::Jensen},
\[
P\left(\sum_{j=1}^{r_2}\lambda_j\by^{(j)},\bx\right)\succeq 
\sum_{j=1}^{r_2}\lambda_j(P(\by^{(j)}, \bx))=\mH_{\bss^{(k,\star)}}(P(\by, \bx))
\succeq 0,
\]
for all $\bx\in\X$. So, by Jensen's inequality for $-\theta_i$'s, $\sum_{j=1}^{r_2}\lambda_j\by^{(j)}$ is feasible to 
\eqref{eq::RCPSO}. Then, as $f(\by)$ is convex, by Jensen's inequality again,
\begin{equation}\label{eq::minimizer}
f^{\star}\le f\left(\sum_{j=1}^{r_2}\lambda_j\by^{(j)}\right)\le 
\sum_{j=1}^{r_2}\lambda_j f(\by^{(j)})=\mH_{\bss^{(k,\star)}}(f)=f_k^{\dsdp}.
\end{equation}
Hence, it holds $f^{\dsdp}_k \ge f^{\star}$.

(iii). It follows from (i) and (ii).

(iv). It follows from (iii) and \eqref{eq::minimizer}.

(v). Similar to the proof of Theorem \ref{th::finite_convergence_SOS} (iv).
%
%
\end{proof}
\begin{remark}\label{rk::empty2}
Under Assumption \ref{assump4}, by Lemma \ref{lem::lambda} and \ref{lem::S0}, we can deduce that
\eqref{eq::cpsdp} is strictly feasible and hence $f^{\psdp}_k=f^{\dsdp}_k$.
\end{remark}

   Now we would like to remark on how and where the stronger 
   SOS-convexity conditions (Assumption \ref{assump1} (i) and (ii)) contribute to achieving 
   better results when relaxing the conic reformulations 
   \eqref{eq::cp} and \eqref{eq::cd}, compared to those achieved by using
   the general convexity conditions (Assumption \ref{assump5} (i) and (ii)). 
   Under Assumption \ref{assump1} and \ref{assump3}, the condition \eqref{eq::LQ} holds true as 
   proven in Proposition \ref{prop::lagSOS}.
   Consequently, replacing $\mathcal{P}(\Y)$ by the quadratic module 
   $\QM_{\lceil k_{\by}/2\rceil}(\Theta)$ of fixed order in \eqref{eq::cp}
   does not change its optimal value. 
   Hence, substituting the outer approximations $\ms^m_k(G)$, $k\in\N$, for $\mathfrak{M}^m_+(\X)$ in 
   \eqref{eq::cpsdpSOS} provides monotonically non-increasing 
   upper bounds on $f^{\star}$. Moreover, the SOS-convexity validates an extension of Jensen’s inequality
   (Proposition \ref{prop::js} and \cite[Theorem 2.6]{LasserreConvex}), which can be used to prove that
   a feasible point of \eqref{eq::cdsdpSOS} is also feasible to \eqref{eq::RCPSO} (Corollary \ref{cor::feasible}) and 
   an optimal solution to \eqref{eq::RCPSO} can be retrieved from \eqref{eq::cdsdpSOS} provided that the FEC 
   \eqref{eq::FEC_SOS_convex} holds (Theorem \ref{th::finite_convergence_SOS} (iii)). 
   In contrast, under the general convexity conditions, \eqref{eq::LQ} does not necessarily hold and we have to replace $\mathcal{P}(\Y)$ by $\QM_k(\Theta)$ with $k\ge\lceil k_{\by}/2\rceil$ in \eqref{eq::cpsdp}. Then, the convergence of $(f^{\psdp}_k)_k$ and 
   $(f^{\dsdp}_k)_k$ to $f^{\star}$  might not be monotonical as explained 
   in Remark~\ref{rk::nonmonotonical}. Additionally, to extract optimal solutions to \eqref{eq::RCPSO} from
   \eqref{eq::cdsdp} under general convexity conditions, 
  we require the FEC \eqref{eq::FEC_convex2},  besides the FEC \eqref{eq::FEC_convex1}, 
   to ensure Theorem \ref{th::finite_convergence} (ii) to hold. 

\begin{example}\label{ex::convex}
Consider the following instance of \eqref{eq::RCPSO}:
\[
f^{\star}\coloneqq\inf_{\by\in\RR^2}\, f(\by)\quad
{\text\rm s.t.}\quad P(\by, \bx)\succeq 0,\ 
\forall \bx\in \X\coloneqq\{\bx\in\RR^2 \mid G(\bx)\succeq 0\},
\]
where $P(\by, \bx)$ and $G(\bx)$ are defined as in Example \ref{ex::1}. 
The following polynomial (\cite[(5.2)]{APgap}) is convex but not SOS-convex:
\begin{equation}\label{eq::nonSOSconvex}
\begin{aligned}
h(\by)=\,&89-363y_1^4y_2+\frac{51531}{64}y_2^6-\frac{9005}{4}y_2^5+
\frac{49171}{16}y_2^4+721y_1^2-2060y_2^3-14y_1^3\\
&+\frac{3817}{4}y_2^2+363y_1^4-9y_1^5+77y_1^6+316y_1y_2+49y_1y_2^3-2550y_1^2y_2-968y_1y_2^2\\
&+1710y_1y_2^4+794y_1^3y_2+\frac{7269}{2}y_1^2y_2^2-\frac{301}{2}y_1^5y_2+\frac{2143}{4}y_1^4y_2^2+\frac{1671}{2}y_1^3y_2^3\\
&+\frac{14901}{16}y_1^2y_2^4-\frac{1399}{2}y_1y_2^5-\frac{3825}{2}y_1^3y_2^2-\frac{4041}{2}y_1^2y_2^3-364y_2+48y_1.
\end{aligned}
\end{equation}
Let $f(\by)=h(y_1-1,y_2-1)/10000$ which is again convex but not SOS-convex. 
We have $k_{\by}=6$ and $k_{\bx}=2$. 

Solving the SDPs \eqref{eq::cpsdp} and \eqref{eq::cdsdp} with $k=3$, we get $f^{\psdp}_3=f^{\dsdp}_3=0.4504$ and the rank conditions \eqref{eq::FEC_convex1} and \eqref{eq::FEC_convex2} are satisfied with $t_1=1$ and $t_2=3$. 
By Theorem \ref{th::finite_convergence},
global optimality is reached and so $f^{\star}\approx0.4504$. The extracted minimizer is $\by^{\star}=(0.2711, 0.4201)$.
\end{example}


Before ending this section, let us consider the most general case -- the problem \eqref{eq::RCPSO} without any convexity assumption. 
By Theorem \ref{th::finite_convergence} (i), if the FEC \eqref{eq::FEC_convex1} holds, 
then we obtain a lower bound $f^{\psdp}_k\le f^{\star}$. Moreover, we have the following theorem.
\begin{theorem}\label{th::finite_convergence_nonconvex}
Let $\bss^{(k,\star)}$ be a minimizer of \eqref{eq::cdsdp} for some 
$k\ge\max\,\{\lceil k_{\bx}/2\rceil, \lceil k_{\by}/2\rceil\}$.
Then,
\begin{enumerate}
\item[\upshape (i)] If $\rank(M_{\lceil k_{\by}/2\rceil}(\bss^{(k,\star)}))=1$, then $f^{\dsdp}_k\ge f^{\star}$ and 
the truncation sequence $\hat{\bss}^{(k,\star)}:=(\bss_{\ba}^{(k,\star)})_{|\ba|\le 2\lceil k_{\by}/2\rceil}$ 
of $\bss^{(k,\star)}$ admits a Dirac representing measure $\delta_{\by^{\star}}$ for some $\by^{\star}\in\Y$;
\item[\upshape (ii)] If the condition \eqref{eq::FEC_convex1} holds and $\rank(M_{\lceil k_{\by}/2\rceil}(\bss^{(k,\star)}))=1$, 
and moreover, $f^{\psdp}_k=f^{\dsdp}_k$, then $f^{\psdp}_k=f^{\dsdp}_k=f^{\star}$ and 
$\by^{\star}$ is a mininizer of \eqref{eq::RCPSO}.
\end{enumerate}
\end{theorem}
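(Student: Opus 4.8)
The plan is to exploit the rank-one hypothesis to collapse the dual optimizer $\bss^{(k,\star)}$ to a single point evaluation, and then to chain the resulting inequality $f^{\dsdp}_k\ge f^{\star}$ with the primal bound $f^{\psdp}_k\le f^{\star}$ supplied by Theorem~\ref{th::finite_convergence}(i). First I would record the elementary fact that a positive semidefinite moment matrix of rank one whose $(\mathbf{0},\mathbf{0})$ entry equals $\mH_{\bss^{(k,\star)}}(1)=1$ is the moment matrix of a single Dirac measure. Concretely, writing $M_{\lceil k_{\by}/2\rceil}(\bss^{(k,\star)})=vv^{\intercal}$ and normalizing so that $v_{\mathbf{0}}=1$ (possible since $v_{\mathbf{0}}^2=s^{(k,\star)}_{\mathbf{0}}=1$), the relation $[M]_{\ba\bb}=s^{(k,\star)}_{\ba+\bb}$ gives $s^{(k,\star)}_{\ba}=v_{\ba}$ for $|\ba|\le\lceil k_{\by}/2\rceil$ and hence $s^{(k,\star)}_{\ba+\bb}=v_\ba v_\bb=s^{(k,\star)}_{\ba}s^{(k,\star)}_{\bb}$; setting $\by^{\star}:=\bss^{(k,\star)}_{\be}$ this forces $s^{(k,\star)}_{\ba}=(\by^{\star})^{\ba}$ for all $|\ba|\le 2\lceil k_{\by}/2\rceil$, which is exactly the assertion that the truncation $\hat{\bss}^{(k,\star)}$ has $\delta_{\by^{\star}}$ as representing measure. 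The only bookkeeping to verify is that every moment needed below lies at degree at most $2\lceil k_{\by}/2\rceil$; this holds because $\deg(f)$, $\deg(\theta_i)$, and $\deg_{\by}(P_{ij})$ are all bounded by $k_{\by}$ by the definition of $k_{\by}$.

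Next I would establish that $\by^{\star}\in\Y$ and that $\by^{\star}$ is feasible for \eqref{eq::RCPSO}. Since $\bss^{(k,\star)}\in\ms_k(\Theta)$ and $k\ge\lceil k_{\by}/2\rceil\ge d_{\Theta}$, the localizing matrices $M_{k-d_{\Theta}}(\theta_i\bss^{(k,\star)})$ are positive semidefinite, so their $(\mathbf{0},\mathbf{0})$ entries $\mH_{\bss^{(k,\star)}}(\theta_i)=\theta_i(\by^{\star})$ are nonnegative, giving $\by^{\star}\in\Y$. For the matrix constraint, the Dirac identification yields $\mH_{\bss^{(k,\star)}}(P(\by,\bx))=P(\by^{\star},\bx)$, and dual feasibility forces this polynomial matrix into $\QM^m_k(G)$, whose elements are positive semidefinite on $\X$ (exactly as noted in the proof of Corollary~\ref{cor::nece}). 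Hence $P(\by^{\star},\bx)\succeq0$ for all $\bx\in\X$, so $\by^{\star}$ is feasible and $f^{\star}\le f(\by^{\star})=\mH_{\bss^{(k,\star)}}(f)=f^{\dsdp}_k$, proving part (i).

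For part (ii) I would invoke Theorem~\ref{th::finite_convergence}(i): since the primal optimizer $\mS^{(k,\star)}$ satisfies the FEC~\eqref{eq::FEC_convex1}, it admits a matrix-valued representing measure and $f^{\psdp}_k\le f^{\star}$. Combining this with part (i) and the hypothesis $f^{\psdp}_k=f^{\dsdp}_k$ gives $f^{\star}\le f^{\dsdp}_k=f^{\psdp}_k\le f^{\star}$, so all three coincide: $f^{\psdp}_k=f^{\dsdp}_k=f^{\star}$. Since $\by^{\star}$ is feasible with $f(\by^{\star})=f^{\dsdp}_k=f^{\star}$, it is a global minimizer of \eqref{eq::RCPSO}.

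I expect no deep obstacle, as this is essentially the rank-one specialization of the convex machinery already in place. The only genuine points of care are the degree accounting that guarantees the single atom $\by^{\star}$ reproduces $f$, the $\theta_i$, and $P$ exactly, and the implicit convention that $\mS^{(k,\star)}$ forms a primal-optimal pair so that the FEC~\eqref{eq::FEC_convex1} and Theorem~\ref{th::finite_convergence}(i) apply. It is worth emphasizing that, unlike the convex settings, \emph{no} Jensen-type inequality is invoked; the rank-one condition substitutes for convexity by pinning the dual functional to an honest point evaluation, which is precisely what makes the lower bound $f^{\dsdp}_k\ge f^{\star}$ available with no convexity assumption on $f$, the $\theta_i$, or $P$.
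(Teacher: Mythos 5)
Your proposal is correct and follows essentially the same route as the paper: identify the rank-one dual optimizer with a Dirac measure $\delta_{\by^{\star}}$, deduce feasibility of $\by^{\star}$ from $\mH_{\bss^{(k,\star)}}(P(\by,\bx))=P(\by^{\star},\bx)\in\QM^m_k(G)$, conclude $f^{\dsdp}_k=f(\by^{\star})\ge f^{\star}$, and combine with Theorem~\ref{th::finite_convergence}(i) for part (ii). You merely spell out two steps the paper leaves implicit (the rank-one-to-Dirac factorization and the verification $\by^{\star}\in\Y$ via the localizing matrices), both of which are handled correctly.
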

\begin{proof}
(i).  As $\rank(M_{\lceil k_{\by}/2\rceil}(\bss^{(k,\star)}))=1$, 
$\hat{\bss}^{(k,\star)}$ admits a Dirac representing measure $\delta_{\by^{(\star)}}$ for some $\by^{\star}\in\Y$.
Then, as $\bss^{(k,\star)}$ is feasible to \eqref{eq::cdsdp}, it holds that
\begin{equation}\label{eq::hats}
P(\by^{\star},\bx)=\mH_{\hat{\bss}^{(k,\star)}}(P(\by,\bx))=\mH_{\bss^{(k,\star)}}(P(\by,\bx))\succeq 0.
\end{equation}
So $\by^{\star}$ is feasible to \eqref{eq::RCPSO}
and hence $f^{\dsdp}_k=\mH_{\bss^{(k,\star)}}(f)=\mH_{\hat{\bss}^{(k,\star)}}(f)=f(\by^{\star})\ge f^{\star}$. 

(ii). It follows from Theorem \ref{th::finite_convergence} (i) and (i).
\end{proof}

\begin{example}\label{ex::nonconvex} Consider Example \ref{ex::1} where we let $f(\by)=(y_1+1)^2+(y_2-1)^2$ and 
modify the matrix $P(\by, \bx)$ to 
\[
P(\by, \bx)=
\left[
\begin{array}{ccc}
   1+(x_1y_1-x_2y_2)^2 & & 2(x_2y_1+x_1y_2) \\
   2(x_2y_1+x_1y_2)   && 1
\end{array}
\right].
\]
Geometrically, the feasible region is constructed by rotating the shape in the 
$\by$-plane defined by $-y_1^2+4y_2^2\le 1$ continuously
around the origin by $90^{\circ}$ clockwise and then taking the common area of 
these shapes in this process. Hence, $-P(\by, \bx)$ is {\itshape not} PSD-convex in $\by$ for every $\bx\in\X$.
It is easy to check that $f^{\star}=2\left(1-1/\sqrt{3}\right)^2\approx 0.3573$
with a unique minimizer $\left(-1/\sqrt{3}, 1/\sqrt{3}\right)\approx (-0.5774, 0.5774)$.

Solving the SDPs \eqref{eq::cpsdp} and \eqref{eq::cdsdp} with $k=3$, we get $f^{\psdp}_3=f^{\dsdp}_3=0.3573$.
As the rank condition \eqref{eq::FEC_convex1} is satisfied with $t=2$ and $\rank(M_{\lceil k_{\by}/2\rceil}(\bss^{(3,\star)}))=1$,
global optimality is reached by Theorem \ref{th::finite_convergence_nonconvex} (ii) 
and moreover, a minimizer $\by^{\star}=(-0.5773, 0.5774)$ is extracted.  \qed
\end{example}

\section{Conclusions}\label{sec::conclusions}
We have proposed a moment-SOS hierarchy for the robust PMI optimization problems with SOS-convexity 
for which asymptotic convergence is established and the FEC is used to detect global optimality. 
Extensions to the general convexity and non-convexity settings are also provided.
As this work generalizes most of the nice features of the moment-SOS hierarchy from the scalar polynomial 
optimization to the robust PMI optimization, we would expect to stimulate more applications of 
robust PMI optimization in different fields (e.g., robust optimization, control theory). 
For the scalar polynomial optimization, various algebraic structures 
(e.g., symmetry, sparsity \cite{waki2006sums,tssos1,wang2022cs}) 
can be exploited to derive a structured moment-SOS hierarchy with lower computational complexity. 
A recent work on exploiting sparsity for PMIs could be found in \cite{zheng2023sum}. 
As a line of further research, we intend to extend such techniques to the robust PMI optimization in future work.

\subsection*{Acknowledgments}
The authors are very grateful for the comments and suggestions of the anonymous referees 
which helped to improve the presentation.

\bibliographystyle{siamplain}
\bibliography{pmo}

\newpage

\appendix
\section{Uniform PSD-SOS-convexity}\label{secA0}
\begin{definition}{\upshape \cite{N2011}}
A polynomial matrix $Q(\by)\in\bS[\by]^{m}$ is \emph{uniformly} 
PSD-SOS-convex if there exists a polynomial matrix
$F(\bv, \by)$ in $(\bv, \by)$ such that
\begin{equation}\label{eq::upsc}
\nabla_{\by\by}(\bv^{\intercal}Q(\by)\bv)=F(\bv, \by)^{\intercal}F(\bv, \by).
\end{equation}
\end{definition}
Clearly, if $Q(\by)\in\bS[\by]^{m}$ is uniformly PSD-SOS-convex, then it is PSD-SOS-convex. Moreover, checking the existence of $F(\bv, \by)$ in \eqref{eq::upsc} can be converted into an SDP feasibility problem. 
For $Q(\by)\in\bS[\by]^{m}$, we write $Q(\by)=\sum_{\ba\in\supp{Q}} Q_{\ba}\by^{\ba}$, where $Q_{\ba}$ is the coefficient matrix of $\by^{\ba}$ in $Q(\by)$ and 
\[\supp{Q}\coloneqq\{\ba\in\N^{\ell}\mid\by^{\ba} \text{ appears in some } Q_{ij}(\by)\}.\]
Then we have the following proposition.
\begin{proposition}\label{prop::upsdsosconvex}
A polynomial matrix 
$Q(\by)=\sum_{\ba\in\supp{Q}}Q_{\ba}\by^{\ba}\in\bS[\by]^{m}$ is uniformly PSD-SOS-convex if 
$\sum_{\ba\in\supp{Q}} Q_{\ba}\otimes \nabla_{\by\by} \by^{\ba}$
is an SOS matrix.
\end{proposition}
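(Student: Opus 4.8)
The plan is to work directly from the definition of uniform PSD-SOS-convexity: I must exhibit a polynomial matrix $F(\bv,\by)$ satisfying $\nabla_{\by\by}(\bv^{\intercal}Q(\by)\bv)=F(\bv,\by)^{\intercal}F(\bv,\by)$, and I will build such an $F$ out of an SOS factorization of the matrix $M(\by)\coloneqq\sum_{\ba\in\supp{Q}} Q_{\ba}\otimes \nabla_{\by\by}\by^{\ba}$. First I would expand the Hessian. Since $\bv^{\intercal}Q(\by)\bv=\sum_{\ba\in\supp{Q}}(\bv^{\intercal}Q_{\ba}\bv)\by^{\ba}$ and $\bv$ is a parameter for the differentiation in $\by$, the scalars $\bv^{\intercal}Q_{\ba}\bv$ factor out and one obtains
\[
\nabla_{\by\by}(\bv^{\intercal}Q(\by)\bv)=\sum_{\ba\in\supp{Q}}(\bv^{\intercal}Q_{\ba}\bv)\,\nabla_{\by\by}\by^{\ba}.
\]

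The central step is a Kronecker-product identity: for any $A\in\RR^{m\times m}$ and any $\ell\times\ell$ matrix $B$,
\[
(\bv^{\intercal}\otimes I_{\ell})\,(A\otimes B)\,(\bv\otimes I_{\ell})=(\bv^{\intercal}A\bv)\,B .
\]
This follows from two applications of the mixed-product rule: first $(\bv^{\intercal}\otimes I_{\ell})(A\otimes B)=(\bv^{\intercal}A)\otimes B$, and then $((\bv^{\intercal}A)\otimes B)(\bv\otimes I_{\ell})=(\bv^{\intercal}A\bv)\otimes B=(\bv^{\intercal}A\bv)B$, since $\bv^{\intercal}A\bv$ is a scalar. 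Applying this termwise with $A=Q_{\ba}$ and $B=\nabla_{\by\by}\by^{\ba}$ and summing over $\ba$ converts the Hessian expansion above into the compact form
\[
\nabla_{\by\by}(\bv^{\intercal}Q(\by)\bv)=(\bv^{\intercal}\otimes I_{\ell})\,M(\by)\,(\bv\otimes I_{\ell}).
\]

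To finish, I would invoke the hypothesis that $M(\by)$ (an $m\ell\times m\ell$ symmetric polynomial matrix, since each $Q_{\ba}$ and each $\nabla_{\by\by}\by^{\ba}$ is symmetric) is an SOS matrix, and write $M(\by)=T(\by)^{\intercal}T(\by)$ for some polynomial matrix $T(\by)$ by the definition of SOS matrix. Setting $F(\bv,\by)\coloneqq T(\by)(\bv\otimes I_{\ell})$, which is a polynomial matrix in $(\bv,\by)$ (linear in $\bv$, polynomial in $\by$), and using $(\bv\otimes I_{\ell})^{\intercal}=\bv^{\intercal}\otimes I_{\ell}$ gives
\[
F(\bv,\by)^{\intercal}F(\bv,\by)=(\bv^{\intercal}\otimes I_{\ell})\,T(\by)^{\intercal}T(\by)\,(\bv\otimes I_{\ell})=(\bv^{\intercal}\otimes I_{\ell})\,M(\by)\,(\bv\otimes I_{\ell})=\nabla_{\by\by}(\bv^{\intercal}Q(\by)\bv),
\]
which is exactly the defining identity for uniform PSD-SOS-convexity. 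The only genuinely delicate point is verifying the Kronecker identity with the correct block dimensions (so that $F$ has the right shape and the transpose moves through the Kronecker product as claimed); everything else is a direct substitution, so I expect no serious obstacle beyond careful bookkeeping.
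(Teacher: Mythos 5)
Your proof is correct and follows essentially the same route as the paper's: both rewrite $\nabla_{\by\by}(\bv^{\intercal}Q(\by)\bv)$ as $(\bv\otimes I_{\ell})^{\intercal}\bigl(\sum_{\ba}Q_{\ba}\otimes\nabla_{\by\by}\by^{\ba}\bigr)(\bv\otimes I_{\ell})$ and then set $F(\bv,\by)=T(\by)(\bv\otimes I_{\ell})$ from the SOS factorization $T(\by)^{\intercal}T(\by)$. The only difference is cosmetic: you justify the key identity via the Kronecker mixed-product rule, while the paper writes the Hessian entrywise as a double sum over $[Q_{\ba}]_{ij}v_iv_j$ before collapsing it into the same Kronecker form.
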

\begin{proof}
Observe that 
\[
\begin{aligned}
\nabla_{\by\by}(\bv^{\intercal}Q(\by)\bv)
&=\sum_{i,j=1}^m \left(\sum_{\ba\in\supp{Q}} [Q_{\ba}]_{ij} \nabla_{\by\by} \by^{\ba}
\right)v_iv_j\\
&=(\bv\otimes I_{\ell})^{\intercal}\left(\sum_{\ba\in\supp{Q}}Q_{\ba}\otimes
\nabla_{\by\by} \by^{\ba}\right)(\bv\otimes I_{\ell}).
\end{aligned}
\]
If there exists a polynomial matrix $T(\by)$ such that
\[
\left(\sum_{\ba\in\supp{Q}}Q_{\ba}\otimes \nabla_{\by\by} \by^{\ba}\right)=
T(\by)^{\intercal}T(\by),
\]
we then obtain \eqref{eq::upsc} by letting $F(\bv, \by)=T(\by) (\bv\otimes I_{\ell})$.
\end{proof}

\begin{corollary}\label{cor::psdSOSconcave}
A quadratic polynomial matrix 
\[Q(\by)=C+\sum_{i=1}^\ell
    L_iy_i+\sum_{i,j=1}^{\ell}Q_{ij}y_iy_j\in\bS[\by]^{m},\]
where $C, L_i, Q_{ij}\in\bS^m$ and 
$Q_{ij}=Q_{ji}$, is uniformly PSD-SOS-convex if the $m\ell\times m\ell$ matrix 
$[Q_{ij}]_{1\le i,j\le \ell}$ is PSD.
\end{corollary}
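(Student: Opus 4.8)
The plan is to invoke Proposition~\ref{prop::upsdsosconvex}: it suffices to prove that the constant-coefficient reduction $\sum_{\ba\in\supp{Q}}Q_{\ba}\otimes\nabla_{\by\by}\by^{\ba}$ is an SOS matrix. Since $Q(\by)$ is quadratic, the Hessian annihilates the constant term $C$ and every linear term $L_iy_i$, so only the degree-two monomials survive. First I would record the elementary Hessians $\nabla_{\by\by}(y_i^2)=2E_{ii}$ and $\nabla_{\by\by}(y_iy_j)=E_{ij}+E_{ji}$ for $i\neq j$, where $E_{ij}\in\RR^{\ell\times\ell}$ denotes the matrix unit.

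Next I would assemble the sum. Collecting $\sum_{i,j}Q_{ij}y_iy_j$ into genuine monomials produces coefficient $Q_{ii}$ for $y_i^2$ and $2Q_{ij}$ for $y_iy_j$ with $i<j$, using $Q_{ij}=Q_{ji}$. Substituting the elementary Hessians and relabelling the off-diagonal indices (again exploiting $Q_{ij}=Q_{ji}$), I expect everything to collapse to the compact form $\sum_{\ba}Q_{\ba}\otimes\nabla_{\by\by}\by^{\ba}=2\sum_{i,j=1}^{\ell}Q_{ij}\otimes E_{ij}$.

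The crux is then to relate this to the hypothesis. The block matrix $[Q_{ij}]_{i,j}$ is exactly $\sum_{i,j}E_{ij}\otimes Q_{ij}$, which differs from $\sum_{i,j}Q_{ij}\otimes E_{ij}$ only by the order of the two Kronecker factors. These two matrices are therefore related by the commutation (perfect-shuffle) permutation $K$, namely $\sum_{i,j}Q_{ij}\otimes E_{ij}=K[Q_{ij}]K^{\intercal}$, and since $K$ is orthogonal this congruence preserves positive semidefiniteness. Hence $[Q_{ij}]\succeq 0$ forces $2\sum_{i,j}Q_{ij}\otimes E_{ij}\succeq 0$. A constant PSD matrix $M$ is trivially an SOS matrix (write $M=L^{\intercal}L$ via a Cholesky or symmetric square-root factorisation and take the constant $T(\by)=L$), so Proposition~\ref{prop::upsdsosconvex} applies and $Q(\by)$ is uniformly PSD-SOS-convex.

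I expect the main obstacle to be purely bookkeeping: getting the ordering of the Kronecker factors right so as to identify $2\sum_{i,j}Q_{ij}\otimes E_{ij}$ with a permutation-congruent copy of $[Q_{ij}]$. If one prefers to avoid naming the commutation matrix, the same conclusion follows by a direct quadratic-form check: for $\bw\in\RR^{\ell}$ and $\bv\in\RR^m$ one computes $\nabla_{\by\by}(\bv^{\intercal}Q(\by)\bv)=2[\bv^{\intercal}Q_{ij}\bv]_{i,j}$, and $\bw^{\intercal}\bigl(2[\bv^{\intercal}Q_{ij}\bv]\bigr)\bw=2(\bw\otimes\bv)^{\intercal}[Q_{ij}](\bw\otimes\bv)\ge 0$. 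Writing $[Q_{ij}]=R^{\intercal}R$ and $\bw\otimes\bv=(I_{\ell}\otimes\bv)\bw$ even yields the explicit factor $F(\bv,\by)=\sqrt{2}\,R(I_{\ell}\otimes\bv)$, verifying the definition of uniform PSD-SOS-convexity directly and bypassing Proposition~\ref{prop::upsdsosconvex} altogether.
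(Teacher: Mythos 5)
Your proposal is correct and follows essentially the same route as the paper: reduce to Proposition~\ref{prop::upsdsosconvex}, identify $\sum_{\ba}Q_{\ba}\otimes\nabla_{\by\by}\by^{\ba}$ with a permutation-congruent copy of $2[Q_{ij}]_{1\le i,j\le\ell}$, and conclude that this constant PSD matrix is an SOS matrix. The paper's proof is just a terser version of this; your extra direct verification via $F(\bv,\by)=\sqrt{2}\,R(I_{\ell}\otimes\bv)$ is a valid (and slightly more explicit) variant of the same computation.
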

\begin{proof}
It is clear that $\sum_{\ba\in\supp{Q}} \nabla_{\by\by} \by^{\ba} \otimes Q_{\ba}=2[Q_{ij}]_{1\le i,j\le \ell}$ which implies that the matrix 
$\sum_{\ba\in\supp{Q}} Q_{\ba}\otimes \nabla_{\by\by} \by^{\ba}\in \bS^{\ell m \times \ell m}$ is PSD. Hence, $Q(\by)$ is uniformly PSD-SOS-convex by Proposition \ref{prop::upsdsosconvex}. 
\end{proof}

\section{An example for matrix-valued measure recovery}\label{secA1}
We construct a finitely atomic matrix-valued measure and then recover it from 
its moment matrix. Let $m=n=k=2$, $r=3$, 
    \[
    \bx^{(1)}=(0.3855, -0.2746),
    \quad
\bx^{(2)}= (-0.5863, 0.9648)
\quad
\bx^{(3)}= (0.1130, -0.8247),
    \]
and
\[
W_1= \left[
\begin{array}{rr}
 0.6731 &  -0.7569\\
   -0.7569 &   0.8512
\end{array}
\right],\ 
W_2= \left[
\begin{array}{rr}
 0.6399  &  0.5259\\
    0.5259  &  0.8048
\end{array}
\right],\
W_3= \left[
\begin{array}{rr}
0.0661 &  -0.2294\\
   -0.2294  &  0.7968
\end{array}
\right].
\]
Note that $\rank(W_1)=\rank(W_3)=1$ and $\rank(W_3)=2$. 
We let $\Phi=\sum_{i=1}^3 W_i\delta_{\bx^{(i)}}$. 
Denote by $\mS=(S_{\ba})_{\ba\in\N^2_4}$ and $M_2(\mS)$ the associated truncated 
moment sequence and moment matrix, respectively. 
Next, we recover $\bx^{(i)}$'s and $W_i$'s from $M_2(\mS)$ by the procedure described in 
Section \ref{sec::recover}.

We have $t=\rank(M_2(\mS))=\rank(M_1(\mS))=4$.
Compute the Cholesky decomposition 
$M_2(\mS)=\widetilde{V}\widetilde{V}^\intercal$ with 
$\widetilde{V}\in\RR^{12\times 4}$ and reduce matrix $\widetilde{V}$ to the column echelon form
\[
U=\left[
\begin{array}{rrrr}
    1.0000 &        0 &        0 &        0 \\       
         0 &   1.0000 &        0 &        0 \\       
         0 &        0 &   1.0000 &        0 \\       
         0 &        0 &        0 &   1.0000 \\       
    0.5775 &   0.3205 &  -0.6606 &   0.5467 \\       
   -1.2518 &  -0.8961 &  -2.1350 &  -3.1739 \\       
    0.3025 &   0.0680 &  -0.0703 &   0.1160 \\       
   -0.2657 &  -0.0103 &  -0.4532 &  -0.6038 \\       
   -0.3451 &  -0.0506 &   0.3762 &  -0.0862 \\       
    0.1975 &  -0.1126 &   0.3368 &   0.7727 \\       
    0.2682 &  -0.1303 &  -1.1301 &  -0.2222 \\       
    0.5088 &   0.8672 &   0.8678 &  -0.1086 \\
\end{array}
\right].
\] 
The rows of $U$ correspond to the monomials 
\[
v_2(\bx,\bw)=[w_1, w_2, x_1w_1, x_1w_2, x_2w_1, x_2w_2, x_1^2w_1, x_1^2w_2, x_1x_2w_1
x_1x_2w_2, x_2^2w_1, x_2^2w_2]^{\intercal}.
\]
From $U$, we can read the generating basis 
$b_2(\bx,\bw)=[w_1, w_2, x_1w_1, x_1w_2]$ which satisfies that $v_2(\bv,\bw)=Ub_2(\bx,\bw)$ holds 
at each pair $\bx^{(i)}$ and $\bw^{(i,j)}$. Moreover, we can get from $U$ the 
multiplication matricex of $x_1$ and $x_2$ with respect to $b_2(\bx,\bw)$:
\[
N_1=\left[
\begin{array}{rrrr}
         0 &        0 &   1.0000 &        0\\  
         0 &        0 &        0 &   1.0000\\
    0.3025 &   0.0680 &  -0.0703 &   0.1160\\
   -0.2657 &  -0.0103 &  -0.4532 &  -0.6038\\
\end{array}
\right],
\]
and 
\[
N_2=\left[
\begin{array}{rrrr}
    0.5775 &   0.3205&   -0.6606&    0.5467\\  
   -1.2518 &  -0.8961&   -2.1350&   -3.1739\\
   -0.3451 &  -0.0506&    0.3762&   -0.0862\\
    0.1975 &  -0.1126&    0.3368&    0.7727\\
\end{array}
\right].
\]
Build a random combination of multiplication matrices 
$N=0.3687N_1+ 0.6313N_2,$
and compute the ordered Schur decomposition $N=ATA^{\intercal}$ with
\[
A=\left[
\begin{array}{rrrr}
    0.2749&   -0.7785&   -0.3800&    0.4171\\  
   -0.9549&   -0.2787&   -0.1012&    0.0171\\
    0.0311&   -0.4340&    0.9003&   -0.0103\\
   -0.1079&    0.3577&    0.1866&    0.9086\\
\end{array}
\right].
\]
Compute the $4$ points in \eqref{eq::lpoints}:
\[
\begin{aligned}
\left[
\begin{array}{c}
      a_1^\intercal N_1 a_1 \\
    a_1^\intercal N_2 a_1
\end{array}
\right]
=
\left[
\begin{array}{r}
0.1130\\
   -0.8247
\end{array}
\right],\quad
&
\left[
\begin{array}{c}
      a_2^\intercal N_1 a_2 \\
    a_2^\intercal N_2 a_2
\end{array}
\right]
=
\left[
\begin{array}{r}
 0.3855\\
   -0.2746
\end{array}
\right],\\
\left[
\begin{array}{c}
      a_3^\intercal N_1 a_3 \\
    a_3^\intercal N_2 a_3
\end{array}
\right]
=
\left[
\begin{array}{r}
-0.5863\\
    0.9648
\end{array}
\right],\quad
&
\left[
\begin{array}{c}
      a_4^\intercal N_1 a_4 \\
    a_4^\intercal N_2 a_4
\end{array}
\right]
=
\left[
\begin{array}{r}
-0.5863\\
    0.9648
\end{array}
\right].
\end{aligned}
\]
As we can see, all points $\bx^{(1)}$, $\bx^{(2)}$ and $\bx^{(3)}$ have been recovered.
Among the above 4 points, $\bx^{(1)}$, $\bx^{(3)}$ appear one time and 
$\bx^{(3)}$ appears two times, which corresponds to the ranks of $W_1$, $W_2$ and $W_3$.
We compute the matrix $\Lambda$ and find $\mathcal{R}=\{1,2,3,4,5,6\}$ indexing the 
$mr=6$ independent rows in $\Lambda$. We have
\[
\Lambda_{\mathcal{R}}=\left[
\begin{array}{rrrrrr}
    1.0000 &        0 &   1.0000 &        0 &   1.0000 &        0\\  
         0 &   1.0000 &        0 &   1.0000 &        0 &   1.0000\\
    0.3855 &        0 &  -0.5863 &        0 &   0.1130 &        0\\
         0 &   0.3855 &        0 &  -0.5863 &        0 &   0.1130\\
   -0.2746 &        0 &   0.9648 &        0 &  -0.8247 &        0\\
         0 &  -0.2746 &        0 &   0.9648 &        0 &  -0.8247\\
\end{array}
\right],
\]
and 
\[
M_{\mathcal{R}}(\mS)=\left[
\begin{array}{rr}
    1.3791 &  -0.4605\\  
   -0.4605 &   2.4528\\
   -0.1082 &  -0.6261\\
   -0.6261 &  -0.0537\\
    0.3781 &   0.9044\\
    0.9044 &  -0.1145\\
\end{array}
\right].
\]
Now we compute
\[
\Lambda_{\mathcal{R}}^{-1}  M_{\mathcal{R}}(\mS)
=\left[
\begin{array}{rr}
    0.6731 &  -0.7569\\  
   -0.7569 &   0.8512\\
    0.6399 &   0.5259\\
    0.5259 &   0.8048\\
    0.0661 &  -0.2294\\
   -0.2294 &   0.7968\\
\end{array}
\right],
\]
which corresponds exactly to $[W_1, W_2, W_3]^\intercal$.





\section{Proof of Theorem \ref{th::asym_conv}}\label{app3}
\begin{proof}

(i). Fix an arbitrary $\varepsilon>0$. Under Assumptions \ref{assump2}, \ref{assump3} and 
\ref{assump5}, the proof of Theorem \ref{th::asym_convergence_SOS} (i) implies 
that there exists $k_1^{(\varepsilon)}\in\N$ 
such that $f^{\dsdp}_k\le f^{\star}+\varepsilon$ for all $k\ge k_1^{(\varepsilon)}$.

Let $\Phi^{\star}\in\mathfrak{M}^m_+(\X)$ be the finitely atomic matrix-valued 
measure in Proposition \ref{prop::sduality} and let $\mS^{\star}=
(S^{\star}_{\ba})_{\ba\in\N^n_{2k}}$ with each $S^{\star}_{\ba}=\int_{\X}\bx^{\ba}\ud\Phi^{\star}(\bx)$.
Then $\mS^{\star}\in\ms^m_k(G)$. Due to Proposition \ref{prop::sduality} and Remark \ref{rk::convex}, 
it holds
\begin{equation}\label{eq::fL}
f(\by)-(f^{\star}-\varepsilon)-\mL_{\mS^{\star}}(P(\by, \bx))>0, 
\end{equation}
for all $\by\in\Y$. As Assumption \ref{assump4}(i) holds, by Putinar's Positivstellens\"atz \cite{Putinar1993} 
(see also Theorem \ref{th::psatz}), there exists $k_2^{(\epsilon)}\in\N$, such that for all $k\ge k_2^{(\epsilon)}$, 
$(f^{\star}-\varepsilon, \mS^{\star})$ is feasible to \eqref{eq::cpsdp} and hence 
$f_k^{\psdp}\ge f^{\star}-\varepsilon$.

As $\varepsilon>0$ is arbitrary, by the weak duality, we have 
$\lim_{k\to\infty} f^{\psdp}_k=\lim_{k\to\infty} f^{\dsdp}_k=f^{\star}$.


(ii). For each $\ba\in\N^{\ell}$, define
\[
N(\ba)\coloneqq\sqrt{\binom{\ell+\left\lceil\frac{\vert \ba\vert}{2}\right\rceil}{\ell}}
\sum_{i=1}^{\left\lceil\frac{\vert \ba\vert}{2}\right\rceil} b^{2i}.
\]
By Assumption \ref{assump4}(i) and Remark \ref{rk::boundSOS}, $\vert s^{(k,\star)}_{\ba}\vert\le N(\ba)$ 
for all $k$ and 
$\ba\in\N^{\ell}_{2k}$. Hence, there always exists a convergent subsequence of $(\bss^{(k,\star)}_\be)_k$. Without loss of generality, we assume that the whole sequence $(\bss^{(k,\star)}_\be)_k$ converges. 
Complete each $\bss^{(k,\star)}$ with zeros to make it an 
infinite vector. Then,
\[
\left\{(s^{(k,\star)}_{\ba})_{\ba\in\N^{\ell}}\right\}_k
\subseteq\prod_{\ba\in\N^{\ell}}\left[-N(\ba), N(\ba)\right].
\]
By Tychonoff’s theorem, the product space 
$\prod_{\ba\in\N^{\ell}}\left[-N(\ba), N(\ba)\right]$ is compact in the 
product topology. Therefore, there exists a subsequence 
$(\bss^{(k_i,\star)})_i$ of $(\bss^{(k,\star)})_k$ 
and $\bss^{\star}=(s^{\star}_{\ba})_{\ba\in\N^{\ell}}$  such 
that $\lim_{i \to \infty} s^{(k_i,\star)}_{\ba}=s^{\star}_{\ba}$ 
for all $\ba\in\N^{\ell}$. By the pointwise convergence, we have 
(a) $\bss^{\star}\in\ms_k(\Theta)$ for all $k$;  (b) $\mH_{\bss^{\star}}(1)=1$; 
(c) $\mH_{\bss^{\star}}(P(\by, \bx))\succeq 0$ for all $x\in\X$.
By (a), (b), Putinar’s Positivstellens\"atz and Haviland’s theorem, $\bss^{\star}$
has a representing probability measure $\mu$ supported on $\Y$, i.e.,
$s^{\star}_{\ba}=\int_{\Y} \by^{\ba}\ud\mu(\by)$ for all $\ba\in\N^{\ell}$.
By Lemma \ref{lem::Jensen},
$P(\bss^{\star}_\be, \bx)\succeq \mH_{\bss^{\star}}(P(\by, \bx))\succeq 0$ 
and $\theta_i(\bss^{\star}_\be)\ge \mH_{\bss^{\star}}(\theta_i)\ge 0$, 
$i=1,\ldots,s$.
Hence $\bss^{\star}_\be$ is feasible to \eqref{eq::RCPSO}. Moreover, 
as $f(\by)$ is convex in $\by$, by (i) and the pointwise convergence, 
\[
f^{\star}=\mH_{\bss^{\star}}(f)\ge f(\bss^{\star}_{\be}),
\]
which indicates that $\bss^{\star}_\be$ is a minimizer of \eqref{eq::RCPSO}. 
\end{proof}


\end{document}